\numberwithin{equation}{section} 
\newtheorem{theorem}{\bf Theorem}[section]
\newtheorem{example}{\bf Example}[section]
\newtheorem{corollary}{\bf Corollary}[section]
\newtheorem{remark}{\bf Remark}[section]
\newtheorem{lemma}{\bf Lemma}[section]
\def\theconclusion{\theconclusion.\arabic{conclusion}}
\newcommand{\norm}[1]{\left\lVert #1\right\rVert}
\newsavebox{\savepar}
\begin{document}
	
	\title{\vspace*{-46pt}\bf Finite Difference Method for Global Stabilization of the Viscous Burgers' Equation with Nonlinear Neumann Boundary Feedback Control
	}
	
\author{
	Shishu Pal Singh\thanks{Department of Mathematical Sciences, Rajiv Gandhi Institute of Petroleum Technology. Email: shishups@rgipt.ac.in}
	\;and
Sudeep Kundu\thanks{Department of Mathematical Sciences, Rajiv Gandhi Institute of Petroleum Technology. Email: sudeep.kundu@rgipt.ac.in} 
}

	\maketitle
	
	\begin{abstract}
		This article focuses on a nonlinear Neumann boundary feedback control formulation for the viscous Burgers' equation and develops a class of finite difference schemes to achieve global stabilization. The proposed procedure, known as the $\theta$-scheme with $\theta \in [0,1]$, unifies explicit and implicit time discretizations and is suitable for handling the nonlinear boundary feedback control problem. Using the discrete energy method, we prove that the proposed difference scheme is conditionally stable for $0 \leq \theta < \frac{1}{2}$ and unconditionally stable for $\theta \geq \frac{1}{2}$. In addition, we establish the exponential stability of the fully discrete solution. The error analysis shows a first-order convergence rate of the state variable in the discrete $L^{2}$-, $H^{1}$-, and $L^{\infty}$-norms for $\theta \geq \frac{1}{2}$, while preserving the exponential stability property. A first-order convergence rate for the boundary control inputs is also obtained. Numerical experiments are conducted to validate the theoretical findings and to demonstrate the effectiveness of the method for the inhomogeneous nonlinear Neumann boundary feedback control of the viscous Burgers' equation.
		
		\textbf{Keywords:}{Viscous Burgers' equation. \and Boundary feedback control.  \and Finite difference method.  \and Stability.  \and Error analysis.  \and Numerical examples.}
		
	\textbf{Mathematics subject classification (2020):}{ 35Q93. 65M06. 65M12. 65M15. 93D15}
	\end{abstract}
	
	\section{Introduction}
	\label{intro}
		We take the viscous Burgers' equation with Neumann boundary control
	
	\begin{align}
		\label{Eq1}
		y_{t}-\nu y_{xx} + yy_{x} &= 0, \;\;\;\; (x,t)\in(0,1)\times(0,\infty) , \\
		y_{x}(0,t) &= u_{0}(t),  \;\;\;\; t\in(0,\infty),  \\
		y_{x}(1,t) &= u_{1}(t), \;\; \;\; t\in(0,\infty),	 \\
		y\left(x,0\right) &= y_{0}(x), \;\; \; \;x\in(0,1),
		\label{Eq2}
	\end{align}
	where $\nu>0$ is the viscosity coefficient; $u_{0}(t)$ and  $u_{1}(t)$ are control inputs.  For simplicity, we refer to the viscous Burgers' equation as Burgers' equation \cite{MR930029}.
	Over the past four decades, the control problem for this equation, which appears in various physical situations has been extensively studied in the literature. In thermal and chemically reacting dynamics problems, Neumann actuation is the natural choice. The steady-state solution of the Burgers' equation with zero Neumann boundary conditions is constant and not asymptotically stable  \cite{byrnes1993boundary}. Therefore, the issue of feedback stabilization becomes crucial to make the equilibrium solution asymptotically stable. Research on local stabilization has been conducted (see, e.g., \cite{burns1991control,byrnes1998global, byrnes1993boundary}, and \cite{ly1997distributed}). Regarding the optimal control problem of the Burgers' equation, feedback control has been derived by minimizing the cost functional through linearization \cite{burns1991control, ly1997distributed}. Local stabilization of this equation is achieved using both distributed and boundary control with a sufficiently small initial condition in the $L^{2}-$norm. Additionally, the optimal control problem has been analyzed using proper orthogonal decomposition \cite{kunisch1999control}. For the finite element method applied to the optimal control problem of the steady-state Burgers' equation, refer to \cite{merino2016finite}. Regarding distributed optimal control analysis and Neumann boundary, see \cite{volkwein2001distributed, sabeh2016distributed, MR2301948}. 
	
	Global stabilization for the Burgers' equation has been addressed using nonlinear  boundary feedback control \cite{krstic1999global}. In \cite{balogh2000burgers}, the authors demonstrate both global asymptotic stability and semi-global exponential stability in the Sobolev space $H^{1}$. In \cite{liu2001adaptive}, $H^{1}$ global stability and well-posedness with Neumann boundary control are established via the Lyapunov approach when the parameter $\nu$ is unknown, i.e., for adaptive control. Furthermore, \cite{smaoui2005boundary} analyzes the stability of the forced Burgers' equation with distributed control, considering both periodic and Neumann boundary conditions. In \cite{ito1998viscous}, global existence and uniqueness are established for the forced Burgers' equation when the initial data and forcing function belong to $L^{\infty}$, under nonlinear Neumann boundary feedback control.  
	Moreover, \cite{gumus2022finite} examines global exponential stabilization using distributed control. 
	
	Extensive research has focused on this equation with Dirichlet boundary conditions, significantly advancing our understanding of its behavior and analytical characteristics. In \cite{kadalbajoo2006numerical}, the equation is solved using a Crank-Nicolson scheme with the aid of the Cole-Hopf transformation, which converts it into a heat equation with Neumann boundary conditions. A similar approach using a finite element method is detailed in \cite{ozics2003finite}. Under the Cole-Hopf transformation, the Burgers’ equation with a nonlinear Neumann boundary condition is transformed into a heat equation with a nonlinear Robin boundary condition\cite{MR3100771, nguyen2001numerical}.
	Without the Cole-Hopf transformation, a Crank-Nicolson method is used to solve the equation in \cite{wani2013crank}. Additionally, \cite{jiwari2013numerical} examines the equation through the weighted average differential quadrature method in space and a finite difference method in time. In \cite{MR4190973}, it is demonstrated that when the initial condition lies in $H^{1}$, an {\it a priori} estimate of the analytic solution holds in the $L^{\infty}$ sense. Furthermore, the authors provide the stability and convergence analysis of a finite difference scheme using the discrete energy method.
	
	In \cite{MR4233226}, the stability and convergence analysis is discussed for the Burgers' equation using a compact finite difference scheme. This scheme, known for its high-order accuracy and implicit nature, efficiently approximates derivatives using compact stencils, minimizing the number of neighboring grid points while preserving high accuracy. Utilizing this method, \cite{sun2009compact} examines the heat equation with nonzero Neumann boundary conditions, demonstrating stability and convergence through the discrete energy method.  
	
The stability analysis of difference schemes with homogeneous mixed boundary conditions was established in \cite{MR0699843}. Later, \cite{MR4242164} investigated the stability and convergence of a fourth-order compact finite difference scheme for the Benjamin–Bona–Mahony–Burgers equation. Additionally, using the cut-off function method, \cite{MR4149614} analyzed the convergence of two difference schemes—namely, a two-time-level implicit scheme and a three-time-level linearized scheme—for the viscous generalized Burgers' equation with a nonlinear term of the form  \(y^{p}y_{x} (p\geq 1 \)).
	
	Numerous studies have focused on numerical simulations without conducting stability and convergence analysis of control for this equation. In \cite{balogh2000burgers}, it is demonstrated that the uncontrolled system (with zero Neumann boundary) converges to a constant steady-state solution of the problem (\ref{Eq1}) - (\ref{Eq2}) through the use of nonlinear boundary control. The authors have employed a third-order quadratic approximation in time and a second-order finite difference approach in space. The asymptotic behavior with feedback control is investigated in \cite{ito1994dissipative}, where the Fourier-collocation method is used for spatial discretization and the two-step implicit method for temporal discretization under periodic boundary conditions. For Dirichlet boundary conditions, a Galerkin method with linear basis functions is employed. Furthermore, \cite{dean1991pointwise} demonstrates the Burgers' equation with pointwise control associated with Dirac measures, using a finite element method in space and a finite difference method in time.  
	
	A finite difference method for handling the nonlinear Neumann boundary condition is investigated in \cite{pao1995finite} for a nonlinear reaction-diffusion equation using the monotone iterative technique. In \cite{ferreira2024fdm}, the parabolic reaction-diffusion equation with a zero Neumann boundary is discussed, with stability and convergence analysis established through the discrete energy method. Additionally, \cite{kundu2018finite} applies a conforming finite element analysis to the Burgers' equation with nonlinear Neumann boundary feedback control, examining convergence results and optimal error estimates in $L^{\infty}(L^{2})$, $L^{\infty}(H^{1})$, and $L^{\infty}(L^{\infty})$. The stability and instability of this equation with nonlinear Neumann boundaries were analyzed in \cite{MR930029}. Various numerical methods have been proposed for solving it under Neumann or mixed boundaries, including the finite element method \cite{nguyen2001numerical, smith1997finite, pugh1995finite}, Adomian decomposition \cite{BAKODAH20171339}, probabilistic approaches \cite{milstein2002probabilistic}, and localized collocation methods \cite{NOJAVAN201823}. Additionally, a three-time-level splitting technique combined with cubic splines was applied to the nonlinear Neumann boundary case in \cite{MR1160632}.
	
	In this study, we delve into a theta scheme tailored for addressing the Burgers' equation with nonlinear feedback control at the boundary. The principal objective is to establish the exponential stability and conduct error analysis for a nonlinear difference scheme accommodating a nonlinear nonhomogeneous Neumann boundary condition, utilizing the discrete energy method. The inclusion of a nonlinear term at the boundary adds complexity to the stability and error analysis of the difference scheme. We demonstrate that despite employing a second-order accurate interior difference, the scheme attains first-order accuracy due to the use of a first-order Neumann boundary condition. Moreover, the theta scheme has been applied in numerical investigations of various equations, including the complex diffusion equation \cite{araujo2012stability, araujo2015stability} and the generalized Fisher's equation \cite{esmailzadeh2022numerical}. The author in \cite{MR2099313} has discussed stability and error analysis for a theta scheme to the lambda-omega system with zero Neumann and Dirichlet boundaries. To the author's knowledge, this study marks the first instance of designing a theta scheme specifically tailored for the Burgers' equation with nonlinear feedback Neumann boundary control laws.
	
	The main contributions of this paper are as follows:
	
	\begin{itemize}
		\item We discuss a theta scheme for this equation with a suitable discretization for the nonlinear term, including the $w_x$ term.
		\item The exponential stability of the difference scheme is established using the discrete energy method. Specifically, the system is unconditionally stable for $\theta \in [\frac{1}{2}, 1]$, while for $\theta \in [0, \frac{1}{2})$, the system is conditionally stable. Moreover, {\it{a priori}} exponential bounds of the state variable are shown in \(L^{2}, H^{1}\) and \(L^{\infty}\)-norms for $\theta \in [\frac{1}{2}, 1]$.
		\item The error analysis to both the state variable and control inputs is discussed for $\theta \in [\frac{1}{2}, 1]$. Additionally, the error estimate of the state variable is discussed in \(L^{2}, H^{1}\) and \(L^{\infty}\)-norms for $\theta \in [\frac{1}{2}, 1]$.
		\item We conduct some numerical experiments to demonstrate the behavior of the control and state trajectories, including their order of convergence in various norms.
	\end{itemize}

	It is worth noting, as demonstrated in \cite{kundu2018finite}, that a steady-state solution of the problem (\ref{Eq1}) - (\ref{Eq2}) with a zero Neumann boundary condition is any constant, denoted as $w_{d}$. Note that without control inputs i.e. $u_0(t)=0$, $u_1(t)=0$, the approximate solution of \eqref{Eq1}-\eqref{Eq2} converges to a non constant steady state solution as $ t\rightarrow \infty,$ when $ \nu $ is small and $ y_{0}(x) $ in \eqref{Eq2} is antisymmetric and not too small ; for details, see \cite{MR1910446, MR3100771}  and \cite{MR1247468}. In our case, when $ \nu $ is small but fixed, the approximate solution of \eqref{Eq1}-\eqref{Eq2} goes to its constant steady state solution with the help of boundary control. Without loss of generality, we assume $w_{d}\geq 0$. To ensure that $\lim_{t\to \infty}y(t,x)=w_{d}$ for all $x\in[0,1]$, we introduce the transformation $w=y-w_{d}$, such that $w=0$ as $t \to \infty$.\\
	By substituting (\ref{Eq1}) - (\ref{Eq2}), we establish the following system:
	\begin{align}
		\label{w11}
		w_{t}-\nu w_{xx} + w_{d}w_{x}+ ww_{x} &= 0, \;\;\;\; (x,t)\in(0,1)\times(0,\infty) , \\
		w_{x}(0,t) &= v_{0}(t),   \;\;\;\; t\in(0,\infty),  \\
		w_{x}(1,t) &= v_{1}(t),  \;\; \;\; t\in(0,\infty),	 \\
		w\left(x,0\right) &= y_{0}(x)-w_{d} = w_{0}(x) \ (say),\;\; \; \;x\in(0,1),
		\label{w12}
	\end{align}
	where $v_{0}(t)$ and $v_{1}(t)$ represent the feedback control laws, as specified in \cite{kundu2018finite}, given by:
	\begin{align*}
		v_{0}(t)&= \frac{1}{\nu}\left( (c_{0}+w_{d}) w(0,t) + \frac{2}{9c_{0}}w(0,t)^{3}\right),\\
		v_{1}(t)&= -\frac{1}{\nu}\left( (c_{1}+w_{d})w(1,t)+\frac{2}{9c_{1}}w(1,t)^{3}\right),
	\end{align*}
	where $c_{0}$ and $c_{1}$ are positive constants. These controllers can be used to design the optimal control \cite{krstic1999global}. Such controller are also known as inverse optimal feedback controllers.
	
	In this paper, we adopt the following notations:
	\begin{itemize}
		\item \cite{evans1998partial} The space $ L^{p}((0,T);X)$ consists of all measurable function $ \phi:[0,T]\rightarrow X $, equipped with the norm:
		\begin{align*}
			\norm{\phi}_{L^{p}((0,T);X)}=
			\begin{cases}
				\left(\int_{0}^{T}\norm{\phi}_{X}^{p}dt\right)^{\frac{1}{p}}< \infty, &\text{if} \ 1 \leq p<\infty, \\
				\mathop{\mathrm{ess\, sup}} \limits_{0\leq t\leq T} (\norm{\phi(t)}_{X})< \infty, \medspace &\text{if}\  p=\infty,
			\end{cases}
		\end{align*}
		where $ X $ denote a Banach space with norm $ {||.||}_{X}$.
		\item \cite{evans1998partial}The Sobolev space $H^{m}(0,1)=\{\phi \vert \ \phi\in L^{2}(0,1),\ \frac{\partial^{\alpha} \phi}{\partial x^{\alpha}}\in L^{2}(0,1), \text{where} \ 1\leq \alpha\leq m $ \}.
		\item  $ C^{n,m}([0, 1], [0, T])$ denotes the space of functions defined in $ [0, 1]\times[0, T]$,
		with continuous partial derivatives  of orders at most  $ n $ and $ m $, with respect to $ x $ and $ t $ respectively.
		\item \textbf{Young's inequality:} For each $ a,b >0$ and $\epsilon>0$
		\begin{align*}
			ab\leq \epsilon\frac{a^{p}}{p}+\frac{1}{\epsilon^{\frac{q}{p}}}\frac{b^{q}}{q},
		\end{align*}
		where $ 1<p,q<\infty $ and $ \frac{1}{p} +\frac{1}{q} =1.$
		\item \cite{heywood1990finite} \textbf{Discrete Gronwall's inequality:} Let $k, B$ and $a_{n}, b_{n}, c_{n} $ for integers $ n\geq 0 $ be nonnegative numbers such that
		\begin{align*}
			a_{n}+k\sum_{l=0}^{n}b_{l}\leq k\sum_{l=0}^{n}c_{l}+C k \sum_{l=0}^{n}a_{l}+B, \hspace{0.5cm} \ \text{for} \ n\geq0,
		\end{align*}
		where C is a positive constant.
		Suppose that $ Ck<1 $. Then,
		\begin{align*}
			a_{n}+k\sum_{l=0}^{n}b_{l}\leq e^{\frac{Ck(n+1)}{(1-Ck)}}\left(k\sum_{l=0}^{n}c_{l}+B\right), \ \ \text{where} \  C>0.
		\end{align*}
	\end{itemize}

	The structure of the rest of the paper is as follows: In Section $2$, we construct a theta scheme for the problem (\ref{w11}) - (\ref{w12}) and introduce preliminary notations, ensuring the consistency of a difference scheme. In Section $ 3 $, the existence and exponential stability of the difference scheme are analyzed. The stability condition is proven for $\theta \in [0, 1]$.  In Section $ 4 $, the error analysis of the difference scheme is discussed, when $\theta \in [\frac{1}{2}, 1]$. Finally, in Section $5$, we present some numerical examples that validate our theoretical results.
	\section{Fully Discrete Scheme.}
	Let $ M $ and $ N $ be two positive integers, and let $ h=\frac{1}{N}$ be the spatial step size. We define the uniform mesh points on the unit interval $ [0, 1] $ as follows: $$ x_{i}= ih, \ \ \text{for} \  i=0, 1, 2,\ldots, N.$$ Similarly, we define the  uniform mesh points on the interval $ [0, T] $  by $$ t_{n}=nk,\ \  \text{for}\  n=0, 1,\ldots, M, $$ where $ k=\frac{T}{M} $ is the temporal step size. Denote $ t_{n+\theta}:=\theta t_{n+1}+(1-\theta)t_{n}$, where  $ \theta\in[0, 1] $.
	
	Let $W_{i}^{n}\approx w(x_{i},t_{n})$ represents an approximate solution at the point $\left(x_{i}, t_{n}\right)$ for the problem (\ref{w11}) - (\ref{w12}). We define the difference operators as follows: 
	\begin{align*}
		\delta_{t}^{+} W_{i}^{n} &:=\dfrac{W_{i}^{n+1} - W_{i}^{n}}{k} ,\;\;\; 	\delta_{x}^{c}W_{i}^{n} :=\dfrac{W_{i+1}^{n} - W_{i-1}^{n}}{2h},	\\	\delta_{x}^{-}W_{i}^{n} &:=\dfrac{W_{i}^{n} - W_{i-1}^{n}}{h},  \;\;\; \delta_{x}^{2} W_{i}^{n} :=\dfrac{W_{i+1}^{n} -2 W_{i}^{n}+W_{i-1}^{n} }{h^{2}}, \ \ \delta_{x}^{+}W_{i}^{n} :=\dfrac{W_{i+1}^{n} - W_{i}^{n}}{h},\\
		W_{i}^{n+\theta}& := \theta W_{i}^{n+1} + (1-\theta) W_{i}^{n} , \hspace{3mm} \text{where} \hspace{3mm}\theta \in [0,1], \medspace \norm{W^{n}}_{\infty}:= \max_{0\leq i\leq N}|W_{i}^{n}|.
	\end{align*}
	Let $ H=\left\{W| W=(W_{0},W_{1}\cdots W_{N})\right\}$ denote the space of grid functions.
	
	We define a grid function $ \phi: H\times H \rightarrow H $ by
	\begin{align}\label{eq2.0}
		\phi(W,W)_{i}:=\begin{cases} \frac{1}{3}(2W_{0}+W_{1})\delta_{x}^{+}W_{0}, &  i=0, \vspace{0.3cm}\\  \frac{1}{3}(W_{i-1}+W_{i}+W_{i+1})\delta_{x}^{c}W_{i}, &   i=1, 2,\ldots, N-1,\vspace{0.3cm} \\ \frac{1}{3}(2W_{N}+W_{N-1}) \delta_{x}^{-}W_{N}, &  i=N.  \end{cases}
	\end{align}
	The distinctive discretization method for the nonlinear term $ww_{x}$ has been employed in prior works such as those cited in \cite{fornberg1973instability, chung1998finite}, and \cite{omrani2008finite}. The discretization approach for the nonlinear term $\phi(w, w)$ is derived by expressing it as $\frac{1}{3}(ww_{x} + (w^{2})_{x})$. However to handle the nonlinear term in stability and error analysis we introduce a different discretization for the boundary case mentioned in \eqref{eq2.0}. 
	
	For any grid function $ W \in H $, we define the operator $\delta_{x}^{2} W_{i}$ as follows
	\begin{align*}
		\delta_{x}^{2}W_{i}:=
		\begin{cases}
			\frac{2}{h}(\delta_{x}^{+}W_{0}-g_{0}(W_{0})), & \ i=0, \vspace{0.3cm}\\
			\frac{1}{h^{2}}(W_{i+1}-2W_{i}+W_{i-1}), &\ \ i=1,2,\ldots, N-1,\vspace{0.3cm}\\
			\frac{2}{h}(-\delta_{x}^{-}W_{N}+g_{N}(W_{N})), &\  i=N. 
		\end{cases}
	\end{align*}
	For this type of discretization of $ \delta_{x}^{2}W ,$ see \cite[Chapter 2]{samarskii2001theory},
	where 
	\begin{align}
		g_{0}(W_{0}^{n}) &=\frac{1}{\nu}\left((c_{0}+w_{d})W_{0}^{n} + \frac{2}{9c_{0}}(W_{0}^{n})^{3}\right),\medspace \  n=0,1,2,\ldots,M,\label{eq1}\\		
		g_{N}(W_{N}^{n}) &= -\frac{1}{\nu}\left((c_{1}+w_{d})W_{N}^{n} + \frac{2}{9c_{1}}(W_{N}^{n})^{3}\right),\medspace \  n=0,1,2,\ldots,M.\label{eq2}
	\end{align}
	We approximate the problem (\ref{w11}) - (\ref{w12}) using a theta scheme to find $ W_{i}^{n} $ such that for all $ 0\leq n \leq  M-1  $
	\begin{align}
		\label{th}
		\begin{cases}
			\delta_{t}^{+}W_{0}^{n}-\frac{2\nu}{h^{2}}\left[W_{1}^{n+\theta}-W_{0}^{n+\theta}-hg_{0}(W_{0}^{n+\theta})\right]+w_{d}\delta_{x}^{+}W_{0}^{n+\theta}+\phi(W,W)_{0}^{n+\theta}=0,\ i=0 ,\  \vspace{0.2cm} \\
			\vspace{0.2cm}
			\delta_{t}^{+} W_{i}^{n}-\nu \delta_{x}^{2} W_{i}^{n+\theta}+ w_{d}\delta_{x}^{c} W_{i}^{n+\theta}+ \phi(W^{n+\theta},W^{n+\theta})_{i} =0, \ \  i=1,2,\ldots, N-1,  \\
			\delta_{t}^{+}W_{N}^{n}-\frac{2\nu}{h^{2}}[W_{N-1}^{n+\theta}-W_{N}^{n+\theta}+hg_{N}(W_{N}^{n+\theta})]+w_{d}\delta_{x}^{-}W_{N}^{n+\theta}+\phi(W,W)_{N}^{n+\theta}=0, \  i=N,\ \vspace{0.2cm} \\
			W_{i}^{0}=w_{0}(x_{i}), \medspace \ i=0, 1,\ldots, N,
		\end{cases}		
	\end{align} 
	where $	g_{0}(W_{0}^{n})$ and $g_{N}(W_{N}^{n})$ are defined in \eqref{eq1}-\eqref{eq2}.

	The scheme (\ref{th}) is referred to as explicit for $\theta = 0$, implicit for $\theta > 0$, and Crank-Nicolson type for $\theta = \frac{1}{2}$. We use  $ C $ to represent a generic positive constant, independent of the step sizes. This constant may depend on the parameters $ c_{0}, c_{1}, \nu,$ $\norm{W^{n}}_{\infty},$ and $ w_{d}.$ 
	\subsection{Consistency.}
	In this subsection, we develop the consistency of the difference scheme, which indicates that the scheme should converge to the differential equation as the step sizes in time and space approach zero.
	
	Assume that $ w(x, t)\in C^{4, 3}([0, 1], [0, T]) $ satisfying (\ref{w11}) - (\ref{w12}). To assess consistency, we follow a method akin to that presented in \cite{jovanovic2013analysis} and \cite{morton1998numerical}. By employing a Taylor series expansion centered at the point $ (x_{i}, t_{n+\frac{1}{2}}) $, we arrive at the truncation error for interior points as follows
	\begin{align}
		\label{eq3.1}
		T_{i}^{n+\theta}=
		\begin{cases}
			O(h^{2}+ k^{2}) & \text{for} \medspace \theta = \frac{1}{2}, \\  O(h^{2}+ k) & \text{for} \medspace \theta \neq \frac{1}{2},
		\end{cases}
	\end{align}
	where $ i=1, 2,\ldots, N-1 $. Further it holds that $$ 	T_{i}^{n+\theta}\rightarrow 0  \medspace \text{as}   \medspace h\rightarrow 0, \  k \rightarrow 0 . $$ 
	Hence, the finite difference scheme exhibits consistency at interior points.
	
	The truncation error at the boundary conditions  $x=0,$ and $x= 1$ is obtained as
	\begin{align}
		\label{eq3.2}	
		T_{i}^{n+\theta}= \begin{cases}
			O(h+ k^{2}) & \text{for} \medspace \theta = \frac{1}{2}, \\  O(h+ k) & \text{for} \medspace \theta \neq \frac{1}{2},
		\end{cases}
	\end{align}
	where $ i=0, N. $
	
	Thus, the difference scheme (\ref{th}) demonstrates consistency at both interior and boundary points.
	\section{Existence and Stability.}
	We introduce the discrete $ L^{2}-$inner products on the space of grid function $ H $
	\begin{align*}
		(W,V):= \frac{h}{2}W_{0}V_{0} + h \sum_{i=1}^{N-1}W_{i}V_{i}+\frac{h}{2}W_{N}V_{N}, \medspace \  \text{for}  \medspace V,W\in H,
	\end{align*} 
	and
	\begin{align*}
		(W,V)_{h}:=h \sum_{i=1}^{N}W_{i}V_{i}, \medspace \ \text{for}  \medspace V, \ W\in \hat{H} , 
	\end{align*}
	with their corresponding norms defined as
	\begin{align*}
		\norm{W}:=\left(W,W\right)^{\frac{1}{2}},\medspace  \norm{W}_{h}:=\left(W,W\right)_{h}^{\frac{1}{2}},
	\end{align*}
	where $ \hat{H} = \left\{W \ |\  W=(W_{1}, W_{2},\ldots, W_{N}) \right\}.$
	
	In the following lemma, we prove the discrete Poincar\'e inequality.
	\begin{lemma} For $ W\in H $, the following holds 
		\begin{align*}
			\norm{W}^{2}\leq (W_{0})^{2}+(W_{N})^{2}+\norm{ \delta_{x}^{-}W}_{h}^{2}.
		\end{align*}
	\end{lemma}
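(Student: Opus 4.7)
The plan is to obtain a pointwise estimate of the form $W_i^2 \le W_0^2 + W_N^2 + c\,\|\delta_x^- W\|_h^2$ at each interior node and then sum against the quadrature weights in the definition of $\|W\|^2$. The key design choice is that we want \emph{both} boundary values to appear with coefficient one on the right-hand side, so we should exploit a symmetric telescoping identity rather than anchor at a single endpoint.

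For $1\le i\le N-1$, I start from the two elementary telescoping expansions $W_i = W_0 + h\sum_{j=1}^{i}\delta_x^- W_j$ and $W_i = W_N - h\sum_{j=i+1}^{N}\delta_x^- W_j$, and average them to obtain the balanced representation $W_i = \tfrac{1}{2}(W_0+W_N) + \tfrac{h}{2}S_i$, where $S_i$ is a signed sum of exactly $N$ terms of the form $\pm\,\delta_x^- W_j$. Squaring, applying $(a+b)^2\le 2a^2+2b^2$, and using Cauchy--Schwarz on $S_i^2 \le N\sum_{j=1}^{N}(\delta_x^- W_j)^2 = \tfrac{N}{h}\|\delta_x^- W\|_h^2$, together with $Nh=1$, yields the clean pointwise bound $W_i^2 \le W_0^2 + W_N^2 + \tfrac{1}{2}\|\delta_x^- W\|_h^2$ valid for every $1\le i\le N-1$.

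Finally I insert this bound into $\|W\|^2 = \tfrac{h}{2}W_0^2 + h\sum_{i=1}^{N-1}W_i^2 + \tfrac{h}{2}W_N^2$ and use $h(N-1) = 1-h \le 1$. Collecting terms yields $\|W\|^2 \le (1-\tfrac{h}{2})(W_0^2+W_N^2) + \tfrac{1-h}{2}\|\delta_x^- W\|_h^2$, which is strictly stronger than the stated inequality. The argument is entirely elementary; the only subtle point is the symmetric averaging of the two telescoping expressions, since anchoring only at $W_0$ would give an unwanted factor of $2$ in front of $W_0^2$ and break the claimed constant of one on the boundary contribution. I do not anticipate a serious obstacle.
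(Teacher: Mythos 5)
Your proof is correct, and it is worth noting that the paper itself does not actually prove this lemma: it only cites \cite[pp.~121--122]{samarskii2001theory}, so your argument is a genuinely self-contained, elementary alternative to an external reference. I checked the steps: the two telescoping identities $W_i = W_0 + h\sum_{j=1}^{i}\delta_x^- W_j$ and $W_i = W_N - h\sum_{j=i+1}^{N}\delta_x^- W_j$ are exact, their average gives $W_i = \tfrac12(W_0+W_N)+\tfrac{h}{2}S_i$ with $S_i$ a signed sum of exactly $N$ increments, and then $(a+b)^2\le 2a^2+2b^2$ together with Cauchy--Schwarz, $\sum_{j=1}^{N}(\delta_x^- W_j)^2=\tfrac1h\norm{\delta_x^- W}_h^2$ and $Nh=1$ gives the pointwise bound $W_i^2\le W_0^2+W_N^2+\tfrac12\norm{\delta_x^- W}_h^2$. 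Summing against the trapezoidal weights with $h(N-1)=1-h$ then yields
\begin{align*}
\norm{W}^2\le \Big(1-\tfrac{h}{2}\Big)\big(W_0^2+W_N^2\big)+\tfrac{1-h}{2}\norm{\delta_x^- W}_h^2,
\end{align*}
which is slightly sharper than the stated inequality. The symmetric averaging is indeed the right device to keep unit coefficients on both boundary terms; anchoring at a single endpoint would, as you say, cost a factor of $2$ there. The only stylistic remark is that the classical treatment in Samarskii typically proceeds from a single telescoping representation combined with a weighted Cauchy--Schwarz (an $\varepsilon$-inequality), which produces embedding constants depending on a free parameter; your balanced version avoids that bookkeeping entirely for the specific constants claimed here.
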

\begin{proof}
	For the proof see \cite[Pages 121-122]{samarskii2001theory}.
\end{proof}

	The following two lemmas assist in proving the stability and conducting error analysis for the state variable.
	\begin{lemma}
		\label{L4.1}
		For $W \in H $, the following result holds
		\begin{align*}
			\frac{h}{2}\delta_{x}^{+}W_{0}W_{0} +h \sum_{i=1}^{N-1}\delta_{x}^{c}W_{i}W_{i}+\frac{h}{2}\delta_{x}^{-}W_{N}W_{N} &=\frac{1}{2}\left(W_{N}^{2}-W_{0}^{2}\right).
		\end{align*}
		\begin{proof}
			Since 
			\begin{align*}
				h \sum_{i=1}^{N-1}\delta_{x}^{c}W_{i}W_{i}=\frac{1}{2}\left(-W_{0}W_{1}+W_{N}W_{N-1}\right),
			\end{align*}
			by adding the terms $ \frac{h}{2}\delta_{x}^{+}W_{0}W_{0} $  and $ \frac{h}{2}\delta_{x}^{-}W_{N}W_{N} $ to above equality, we obtain
			\begin{align*}
				\frac{h}{2}\delta_{x}^{+}W_{0}W_{0} +h \sum_{i=1}^{N-1}\delta_{x}^{c}W_{i}W_{i}+\frac{h}{2}\delta_{x}^{-}W_{N}W_{N} &=\frac{1}{2}\left(W_{N}^{2}-W_{0}^{2}\right).
			\end{align*}
			The proof is completed.
		\end{proof}
	\end{lemma}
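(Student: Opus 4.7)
The plan is to verify this identity by direct expansion and telescoping, which is the standard discrete summation-by-parts computation adapted to the trapezoidal-weighted inner product $(\cdot,\cdot)$. First I would substitute the definitions
\[
\delta_{x}^{+}W_{0}=\frac{W_{1}-W_{0}}{h},\qquad \delta_{x}^{c}W_{i}=\frac{W_{i+1}-W_{i-1}}{2h},\qquad \delta_{x}^{-}W_{N}=\frac{W_{N}-W_{N-1}}{h}
\]
into the left-hand side. The explicit factors of $h$ cancel against the denominators, leaving three pieces: a boundary contribution $\tfrac{1}{2}(W_{0}W_{1}-W_{0}^{2})$ from $i=0$, the central sum $\tfrac{1}{2}\sum_{i=1}^{N-1}(W_{i+1}-W_{i-1})W_{i}$, and a boundary contribution $\tfrac{1}{2}(W_{N}^{2}-W_{N-1}W_{N})$ from $i=N$.

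The key step is to evaluate the central sum. I would split it as $\sum_{i=1}^{N-1}W_{i+1}W_{i}-\sum_{i=1}^{N-1}W_{i-1}W_{i}$ and perform the index shifts $j=i+1$ in the first sum and $j=i-1$ in the second. The resulting sums differ only at their extreme indices, so almost every term cancels pairwise, leaving exactly $W_{N}W_{N-1}-W_{0}W_{1}$. Thus the central contribution reduces to $\tfrac{1}{2}(W_{N}W_{N-1}-W_{0}W_{1})$.

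Finally I would add the three pieces: the cross terms $W_{0}W_{1}$ and $W_{N-1}W_{N}$ cancel against the matching terms coming from the two boundary contributions, and only the pure squares $-\tfrac{1}{2}W_{0}^{2}$ and $\tfrac{1}{2}W_{N}^{2}$ survive. This yields $\tfrac{1}{2}(W_{N}^{2}-W_{0}^{2})$, as claimed.

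I do not foresee any substantive obstacle: the identity is essentially a discrete integration by parts, and the half-weights $h/2$ at the endpoints combined with the one-sided operators $\delta_{x}^{+}$ at $x_{0}$ and $\delta_{x}^{-}$ at $x_{N}$ are precisely what is needed for the boundary terms to mesh with the telescoped central sum. It is worth noting that this matches the structure of the inner product $(W,V)$ (the trapezoidal rule), which is why the identity has no leftover $O(h)$ correction — a feature that will be essential later when combining it with the discrete Poincaré inequality to extract energy estimates.
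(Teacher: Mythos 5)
Your proof is correct and follows essentially the same route as the paper: both reduce the central sum to the telescoped quantity $\tfrac{1}{2}(W_{N}W_{N-1}-W_{0}W_{1})$ and then observe that the half-weighted one-sided boundary terms cancel the cross products, leaving $\tfrac{1}{2}(W_{N}^{2}-W_{0}^{2})$. You simply spell out the index-shift cancellation that the paper states without detail.
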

	\begin{lemma}
		\label{L4.2}
		For $W\in H, $ there holds
		\begin{align*}
			\left(\phi(W,W),W \right)=\frac{1}{3}\left(W_{N}^{3}-W_{0}^{3}\right).
		\end{align*}
		\begin{proof}
			Summing  the nonlinear term for the interior points  gives
			\begin{align}
				\label{s1}
				h\sum_{i=1}^{N-1}\phi(W,W)_{i}W_{i}=\frac{1}{6}(-W_{0}^{2}W_{1}-W_{0}W_{1}^2+W_{N}^{2}W_{N-1}+W_{N}W_{N-1}^2).
			\end{align}
			Thus, utilizing the grid function $ \phi$ to incorporate  the boundary terms in the definition of discrete $ L^{2}-$inner product, it follows  that
			\begin{align*}
				\left(\phi(W,W), W \right)&=\frac{h}{6}(2W_{0}+W_{1})\delta_{x}^{+}W_{0}W_{0}+h\sum_{i=1}^{N-1}\phi(W,W)_{i}W_{i}+\frac{h}{6}(2W_{N}+W_{N-1})\delta_{x}^{-}W_{N}W_{N},\\
				&=\frac{1}{3}\left(W_{N}^{3}-W_{0}^{3}\right).
			\end{align*}
		\end{proof}
	\end{lemma}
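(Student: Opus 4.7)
The plan is to split the inner product $\left(\phi(W,W), W\right)$ into the two boundary contributions and the interior summation, and then show that everything collapses via a telescoping identity combined with a careful boundary expansion.

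First, I would handle the interior sum $h\sum_{i=1}^{N-1}\phi(W,W)_{i}W_{i} = \tfrac{1}{6}\sum_{i=1}^{N-1}(W_{i-1}+W_{i}+W_{i+1})(W_{i+1}-W_{i-1})W_{i}$ by introducing the telescoping potential $a_{i} := W_{i}W_{i+1}(W_{i}+W_{i+1}) = W_{i}^{2}W_{i+1} + W_{i}W_{i+1}^{2}$ and verifying by direct algebraic expansion that each summand equals $a_{i}-a_{i-1}$. A routine telescoping argument then gives $h\sum_{i=1}^{N-1}\phi(W,W)_{i}W_{i} = \tfrac{1}{6}(a_{N-1}-a_{0})$, which is exactly the identity \eqref{s1} already displayed.

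Second, I would expand the two boundary contributions in the discrete $L^{2}$-inner product. Using $\delta_{x}^{+}W_{0} = (W_{1}-W_{0})/h$, the term $\tfrac{h}{2}\phi(W,W)_{0}W_{0}$ simplifies to $\tfrac{1}{6}(2W_{0}+W_{1})(W_{1}-W_{0})W_{0} = \tfrac{1}{6}\bigl(W_{0}^{2}W_{1} - 2W_{0}^{3} + W_{0}W_{1}^{2}\bigr)$. Symmetrically, using $\delta_{x}^{-}W_{N} = (W_{N}-W_{N-1})/h$, the term $\tfrac{h}{2}\phi(W,W)_{N}W_{N}$ simplifies to $\tfrac{1}{6}\bigl(2W_{N}^{3} - W_{N-1}^{2}W_{N} - W_{N-1}W_{N}^{2}\bigr)$.

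Finally, adding the interior sum to the two boundary pieces, the mixed cubic terms $W_{0}^{2}W_{1}$, $W_{0}W_{1}^{2}$, $W_{N-1}^{2}W_{N}$, and $W_{N-1}W_{N}^{2}$ cancel pairwise, leaving precisely $\tfrac{1}{6}(2W_{N}^{3} - 2W_{0}^{3}) = \tfrac{1}{3}(W_{N}^{3}-W_{0}^{3})$, as required. The principal obstacle is identifying the correct telescoping potential $a_{i}$ so that the discrete skew-symmetry is exposed; this is precisely where the author's specialized boundary definition of $\phi$ pays off, because it produces exactly the residues needed to cancel the leftover mixed terms from the interior telescoping. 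Once $a_{i}$ is in hand, the rest is careful but routine bookkeeping.
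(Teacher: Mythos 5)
Your proposal is correct and follows essentially the same route as the paper: compute the interior sum by telescoping to obtain identity \eqref{s1}, then add the two boundary contributions and observe that the mixed cubic terms cancel. The only difference is that you make the telescoping explicit via the potential $a_{i}=W_{i}^{2}W_{i+1}+W_{i}W_{i+1}^{2}$, a detail the paper leaves to the reader.
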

	
	\subsection{Existence}
	We establish the existence of a solution to the difference scheme \eqref{th} by employing Brouwer's fixed point theorem. 
	\begin{theorem}(Brouwer's fixed point  theorem \cite{browder1965existence}).
		\label{th4.1}
		Let $X$ be a finite dimensional inner product space. Suppose that $ f:X \rightarrow X $ is continuous and there exists $ \alpha >0$ such that $ (f(x),x)\geq 0 \medspace \forall x\in X $ with $ \norm{x}=\alpha. $ Then, there exists $ x_{1}\in X $ such that $ f(x_{1})=0 $ and $ \norm{x_{1}} \leq \alpha.$
	\end{theorem}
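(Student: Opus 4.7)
The plan is to argue by contradiction, since the conclusion asserts the existence of a zero of $f$ inside the closed ball $\bar B_\alpha=\{x\in X:\norm{x}\leq\alpha\}$. Assume, on the contrary, that $f(x)\neq 0$ for every $x\in\bar B_\alpha$. Because $f$ is continuous, $\norm{f(\cdot)}$ is continuous and strictly positive on the compact set $\bar B_\alpha$, so the map
\begin{equation*}
g(x):=-\alpha\,\frac{f(x)}{\norm{f(x)}}
\end{equation*}
is well-defined and continuous from $\bar B_\alpha$ into itself (its image actually lies on the sphere of radius $\alpha$).

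Next I would invoke the classical Brouwer fixed point theorem for continuous self-maps of a closed ball in a finite-dimensional inner product space (identifying $X$ with $\mathbb{R}^{\dim X}$ via an orthonormal basis, so that $\bar B_\alpha$ is homeomorphic to a standard Euclidean ball). This produces a point $x^{*}\in\bar B_\alpha$ with $x^{*}=g(x^{*})$. From the definition of $g$ we immediately read off $\norm{x^{*}}=\alpha$, placing $x^{*}$ on the sphere where the hypothesis $(f(x),x)\geq 0$ applies.

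Finally, I would take the inner product of the fixed-point equation $x^{*}=-\alpha f(x^{*})/\norm{f(x^{*})}$ with $x^{*}$ itself, obtaining
\begin{equation*}
(f(x^{*}),x^{*})=-\frac{\norm{x^{*}}^{2}}{\alpha}\,\norm{f(x^{*})}=-\alpha\,\norm{f(x^{*})}<0,
\end{equation*}
which directly contradicts the standing hypothesis. Hence the initial assumption fails, and there must exist $x_{1}\in\bar B_\alpha$ with $f(x_{1})=0$, as claimed.

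The only non-trivial ingredient is the classical Brouwer fixed point theorem itself, which is taken as the external input (and justifies the name of the statement); every other step is routine manipulation. The main conceptual point to get right is the sign convention in the definition of $g$: the minus sign is what converts the assumed non-negativity $(f(x),x)\geq 0$ on the sphere into the desired contradiction, and it is also what guarantees $g$ maps $\bar B_\alpha$ into $\bar B_\alpha$ rather than out of it.
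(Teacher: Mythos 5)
Your proof is correct, and it is the standard argument: assume $f$ has no zero in the closed ball $\bar B_\alpha$, retract via $g(x)=-\alpha f(x)/\norm{f(x)}$ onto the sphere, apply the classical Brouwer fixed point theorem to get $x^*=g(x^*)$ with $\norm{x^*}=\alpha$, and derive $(f(x^*),x^*)=-\alpha\norm{f(x^*)}<0$, contradicting the hypothesis on the sphere. The paper itself gives no proof of this statement and simply cites the reference, so there is nothing to compare against; your argument supplies exactly the omitted justification, with the sign of $g$ handled correctly so that the fixed point lands on the sphere and the non-negativity hypothesis is violated.
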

	
	\begin{theorem}
		The difference scheme (\ref{th}) has at least a solution.
	\end{theorem}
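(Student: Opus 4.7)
Since the $\theta=0$ case of \eqref{th} is explicit and determines $W^{n+1}$ directly from $W^n$, I assume $\theta\in(0,1]$ and argue by induction on $n$, showing that a given $W^n\in H$ admits at least one $W^{n+1}\in H$ solving \eqref{th}. The plan is to recast one time step as a fixed-point problem for the auxiliary variable $V:=W^{n+\theta}$, so that $W^{n+1}=\theta^{-1}V-\theta^{-1}(1-\theta)W^n$ and $\delta_t^+ W^n=(V-W^n)/(k\theta)$. Define $f:H\to H$ so that $f(V)_i$ equals the left-hand side of the $i$-th equation in \eqref{th} after this substitution; each component of $f$ is polynomial in the entries of $V$, so $f$ is continuous. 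By Theorem \ref{th4.1}, it suffices to exhibit $\alpha>0$ such that $(f(V),V)\geq 0$ whenever $\norm{V}=\alpha$.

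I would compute $(f(V),V)$ in the weighted discrete inner product by treating each term of $f$ separately. The time-derivative piece contributes $(k\theta)^{-1}\bigl(\norm{V}^2-(W^n,V)\bigr)$, which Cauchy--Schwarz bounds below by $\norm{V}(\norm{V}-\norm{W^n})/(k\theta)$. For the diffusion term $-\nu(\delta_x^2 V,V)$, a discrete summation-by-parts that accounts for both the trapezoidal weights $h/2$ in $(\cdot,\cdot)$ and the ghost-point elimination built into the $i=0,N$ formulas for $\delta_x^2$ yields
\begin{equation*}
-\nu(\delta_x^2 V,V)=\nu\norm{\delta_x^- V}_h^2+\nu g_0(V_0)V_0-\nu g_N(V_N)V_N,
\end{equation*}
and substituting \eqref{eq1}--\eqref{eq2} converts the boundary contributions into the manifestly nonnegative combination $(c_0+w_d)V_0^2+(c_1+w_d)V_N^2+\tfrac{2}{9c_0}V_0^4+\tfrac{2}{9c_1}V_N^4$. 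Lemma \ref{L4.1} applied to the $w_d$-drift term yields $\tfrac{w_d}{2}(V_N^2-V_0^2)$, and Lemma \ref{L4.2} handles the quadratic nonlinearity, producing $\tfrac{1}{3}(V_N^3-V_0^3)$.

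The only possibly negative contributions are $-\tfrac{w_d}{2}V_0^2$ together with the indefinite cubic boundary terms $-\tfrac{1}{3}V_0^3$ and $\tfrac{1}{3}V_N^3$. The quadratic piece is absorbed directly by $(c_0+w_d)V_0^2$; the main obstacle is controlling the cubic boundary terms, and I would handle them via Young's inequality in the form $\tfrac{1}{3}|V_0|^3\leq \epsilon V_0^4+C_\epsilon$ (and similarly for $V_N$), choosing $\epsilon<\min\{2/(9c_0),2/(9c_1)\}$ so that the quartic boundary terms from the feedback laws strictly dominate. What remains is a constant $-C$ depending only on $c_0,c_1,\nu,w_d$. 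Collecting everything,
\begin{equation*}
(f(V),V)\geq \frac{\norm{V}}{k\theta}\bigl(\norm{V}-\norm{W^n}\bigr)+\nu\norm{\delta_x^- V}_h^2-C,
\end{equation*}
which becomes nonnegative as soon as $\alpha:=\norm{V}>\norm{W^n}$ is chosen large enough that $\alpha(\alpha-\norm{W^n})\geq Ck\theta$. Theorem \ref{th4.1} then produces $V^*\in H$ with $f(V^*)=0$, and $W^{n+1}:=\theta^{-1}\bigl(V^*-(1-\theta)W^n\bigr)$ is the required solution of \eqref{th} at the next time level; iterating over $n$ yields existence for all time levels.
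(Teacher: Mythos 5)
Your argument is correct and follows essentially the same route as the paper: one time step is recast as a zero-finding problem for $V=W^{n+\theta}$, continuity is immediate from polynomiality, and the lower bound on $(f(V),V)$ is assembled from the same summation-by-parts identity for the diffusion/feedback terms together with Lemmas \ref{L4.1} and \ref{L4.2}, before invoking Theorem \ref{th4.1}. The only (immaterial) difference is in absorbing the cubic boundary terms: the paper uses the homogeneous Young bound $\tfrac{1}{3}|V_0|^3\leq \tfrac{c_0}{2}V_0^2+\tfrac{1}{18c_0}V_0^4$ so that $\alpha=1+\norm{W^n}$ suffices, whereas you pay an additive constant $C_\epsilon$ and compensate by taking $\alpha$ larger.
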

	
	\begin{proof}
		A theta scheme is a two-time-level nonlinear difference scheme that involves computations at the current time level and the next time level. Let $W^n$ represent the value at the $n^{th}$ time level, assumed to be uniquely determined. The  scheme can be seen as a system of nonlinear equations for the value $W^{n+\theta},$ where $ \theta\in[0,1]$. Once $W^{n+\theta}$ is obtained, $W^{n+1}$ is computed as:
		$$W^{n+1} =\frac{1}{\theta}(W^{n+\theta} - (1-\theta)W^n),  \text{for } \theta \neq 0. $$  For \( \theta = 0 \), the scheme is explicit and hence, at each time level \( t_{n} \), \( W^{n} \) is already known.
		
		For $ \theta\neq 0, $ we express the difference scheme (\ref{th}) as
		\begin{align}
			\label{3.2}
			\frac{1}{\theta}(W_{i}^{n+\theta}-W_{i}^{n})- k\nu \delta_{x}^{2} W_{i}^{n+\theta}+ k w_{d}\delta_{x} W_{i}^{n+\theta}+ k\phi \left(W^{n+\theta},W^{n+\theta}\right)_{i}=0,\medspace \  i=0,1,\ldots, N,
		\end{align}
		
		where
		
		for $ i=0, $  $ i=N, $ and $ i=1, 2, \ldots, N-1, $ we have  $ \delta_{x} W_{0}^{n+\theta}=\delta_{x}^{+} W_{0}^{n+\theta} $, $ \delta_{x} W_{N}^{n+\theta}=\delta_{x}^{-} W_{N}^{n+\theta},$ and $ \delta_{x} W_{i}^{n+\theta}=\delta_{x}^{c} W_{i}^{n+\theta}$ respectively in (\ref{3.2}).
		
		Let $ V=W^{n+\theta}.$ Define $ g:H\rightarrow H $ as
		\begin{align}
			\label{3.3}
			g(V)= \frac{1}{\theta}(V-W^{n}) -k\nu  \delta_{x}^{2}V + k w_{d} \delta_{x}V+ k \phi \left(V,V\right).
		\end{align}
		It is straight forward to check that $g$ is  continuous. Taking the discrete $ L^{2}-$inner product between $ (\ref{3.3}) $ and $V,$    we obtain
		\begin{align}
			\label{3.41}
			(g(V),V) =\frac{1}{\theta}(V,V) -\frac{1}{\theta} (W^{n},V) -k \nu  \left(\delta_{x}^{2}V,V\right) + k w_{d} \left(\delta_{x}V,V\right)+k\left(\phi \left(V,V\right),V\right).
		\end{align}
		Using  the Young's inequality and the Cauchy-Schwarz inequality with Lemmas \ref{L4.1} and \ref{L4.2} to the last two term on right hand side of \eqref{3.41},  we deduce
		\begin{align*}
			\nonumber (g(V),V) \geq \frac{1}{\theta}\norm{V}^{2}-\frac{1}{\theta}\norm{W^{n}}\norm{V}& +\nu k \norm{\delta_{x}^{-}V}_{h}^{2}\nonumber\\&+k\left((c_{0}+w_{d})V_{0}^{2}+\frac{2}{9c_{0}}V_{0}^{4}+(c_{1}+w_{d})V_{N}^{2}+\frac{2}{9c_{1}}V_{N}^{4}\right)\nonumber\\& +\frac{kw_{d}}{2}(V_{0}^{2}-V_{N}^{2})-\frac{kc_{1}}{2}V_{N}^{2}-\frac{k}{18c_{1}}V_{N}^{4}-\frac{kc_{0}}{2}V_{0}^{2}-\frac{k}{18c_{0}}V_{0}^{4}.
		\end{align*}
		This inequality implies
		\begin{align}
			\label{3.4}
			(g(V),V) \geq \frac{1}{\theta}\norm{V}(\norm{V}-\norm{W^{n}})&+\nu k \norm{\delta_{x}^{-}V}_{h}^{2}\nonumber\\&+\frac{k}{2}\left((c_{0}+3w_{d})V_{0}^{2}+\frac{1}{3c_{0}}V_{0}^{4}+(c_{1}+w_{d})V_{N}^{2}+\frac{1}{3c_{1}}V_{N}^{4}\right).
		\end{align}
		
		Consequently, for $ \norm{V}= 1+ \norm {W^{n}}=\alpha,$ we conclude that $(g(V),V)\geq 0,$ since the third and fourth terms on the right-hand side of \eqref{3.4} are positive. Thus, by Theorem \ref{th4.1}, there exists  $V^{*}\in H$ and $ \alpha>0 $  such that $g(V^{*})=0 $ and $ \norm{V^{*}}\leq \alpha. $
		
		This concludes the proof.
	\end{proof}
	
	\subsection{Exponential Stability Analysis.}
	Next, we deal with the exponential stability of the finite difference scheme (\ref{th}). 
	\begin{lemma}
		\label{L4.3} 
		Let $ 0\leq \alpha \leq \frac{\theta^{2}}{2}\min\{\nu, \frac{(c_{0}+w_{d})}{2},\frac{(c_{1}+3w_{d})}{2}\}$, where $\theta \in [\frac{1}{2},1]$. Assume that $ k_{0}>0 $ such that for $ 0<k\leq k_{0} $ 
		\begin{align}
			\label{3.61}
			e^{2\alpha k}\leq 1+\theta^{2} k \min\{\nu, \frac{(c_{0}+w_{d})}{2},\frac{(c_{1}+3w_{d})}{2} \}.
		\end{align}
		If $\theta \in [\frac{1}{2},1]$, then the difference scheme (\ref{th}) exhibits unconditional stability, ensuring that 	
		\begin{align*}		\norm{{W}^{M}}^{2}&+ke^{-2\alpha t_{M}}\beta^{*}\sum_{n=0}^{M-1}\Big(\norm{\delta_{x}^{-}\hat{W}^{n+1}}_{h}^{2}+(\hat{W}_{0}^{n+1})^{2}+(\hat{W}_{N}^{n+1})^{2}\Big)\\& +ke^{-2\alpha t_{M}}\theta^{4}\sum_{n=0}^{M-1} e^{2\alpha t_{n}}\left(\frac{1}{6c_{0}}(W_{0}^{n+1})^{4}+ \frac{1}{6c_{1}}(W_{N}^{n+1})^{4}\right)\leq Ce^{-2\alpha t_{M}} \norm{W^{0}}_{1}^{2},
		\end{align*}
	where $ \beta^{*} $ is given in \eqref{3.14} below and $\norm{W^{0}}_{1}^{2}:=\norm{\delta_{x}^{-}{W}^{0}}_{h}^{2}+({W}_{0}^{0})^{2}+({W}_{N}^{0})^{2}+({W}_{0}^{0})^{4}+({W}_{N}^{0})^{4}$.
	\end{lemma}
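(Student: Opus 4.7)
The plan is a discrete energy estimate, testing the scheme \eqref{th} in the inner product $(\cdot,\cdot)$ with $2k W^{n+\theta}$. For the time-difference I would use the identity
\begin{align*}
2(W^{n+1}-W^n,\, W^{n+\theta}) = \norm{W^{n+1}}^{2} - \norm{W^{n}}^{2} + (2\theta-1)\norm{W^{n+1}-W^{n}}^{2},
\end{align*}
whose last term is nonnegative exactly when $\theta\geq\tfrac12$; this is the origin of the unconditional stability. Summation by parts on $-\nu(\delta_x^2 W^{n+\theta},W^{n+\theta})$, using the boundary modification of $\delta_x^2$, produces $\nu\norm{\delta_x^- W^{n+\theta}}_h^{2}$ plus the feedback terms $\nu g_0(W_0^{n+\theta})W_0^{n+\theta}$ and $-\nu g_N(W_N^{n+\theta})W_N^{n+\theta}$, which by \eqref{eq1}--\eqref{eq2} expand as $(c_0+w_d)(W_0^{n+\theta})^{2}+\tfrac{2}{9c_0}(W_0^{n+\theta})^{4}$ at $i=0$ and the analogue at $i=N$. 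Lemmas~\ref{L4.1} and \ref{L4.2} convert the drift $w_d\delta_x W^{n+\theta}$ and the discrete convection $\phi(W^{n+\theta},W^{n+\theta})$ into the pure boundary contributions $\tfrac{w_d}{2}((W_N^{n+\theta})^{2}-(W_0^{n+\theta})^{2})$ and $\tfrac13((W_N^{n+\theta})^{3}-(W_0^{n+\theta})^{3})$.

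The only potentially negative contribution is the cubic $-\tfrac{2k}{3}(W_0^{n+\theta})^{3}$. I would absorb it via Young in the form $|a|^{3}\leq\tfrac{\varepsilon}{2}a^{4}+\tfrac{1}{2\varepsilon}a^{2}$, choosing $\varepsilon$ so that only a fraction of the quartic $\tfrac{4k}{9c_0}(W_0^{n+\theta})^{4}$ is consumed and the residual quadratic is dominated by $2k(c_0+\tfrac{w_d}{2})(W_0^{n+\theta})^{2}$ (the endpoint $i=N$ is similar, without sign obstruction). To recover the $\theta^{4}(W_0^{n+1})^{4}$, $\theta^{4}(W_N^{n+1})^{4}$ terms visible in the statement I would invert $W^{n+\theta}=\theta W^{n+1}+(1-\theta)W^{n}$ at the endpoints and apply $(a+b)^{4}\leq 8(a^{4}+b^{4})$, obtaining $(W_\bullet^{n+\theta})^{4}\geq c\,\theta^{4}(W_\bullet^{n+1})^{4}-C(1-\theta)^{4}(W_\bullet^{n})^{4}$ with universal constants; the correction terms in $(W_\bullet^{n})^{4}$ either shift into the preceding step of the quartic sum (and are controlled by the quartic already available there) or are absorbed into the data norm $\norm{W^{0}}_{1}^{2}$ at $n=0$.

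To extract the exponential decay I would multiply the resulting dissipative inequality by $e^{2\alpha t_{n+1}}$, rewrite
\begin{align*}
e^{2\alpha t_{n+1}}\norm{W^{n+1}}^{2}-e^{2\alpha t_{n}}\norm{W^{n}}^{2}=e^{2\alpha t_{n}}\bigl((e^{2\alpha k}-1)\norm{W^{n+1}}^{2}+(\norm{W^{n+1}}^{2}-\norm{W^{n}}^{2})\bigr),
\end{align*}
and invoke the discrete Poincar\'e inequality stated above to bound $\norm{W^{n+\theta}}^{2}$ by the very combination $\norm{\delta_x^- W^{n+\theta}}_h^{2}+(W_0^{n+\theta})^{2}+(W_N^{n+\theta})^{2}$ that already appears in the dissipation. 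The smallness assumption \eqref{3.61} is exactly what is needed for the growth $(e^{2\alpha k}-1)\norm{W^{n+1}}^{2}$ to be dominated by a fixed fraction of this dissipation, the factor $\theta^{2}$ inside \eqref{3.61} reflecting the loss incurred when converting $(W^{n+\theta})^{2}$ into $(W^{n+1})^{2}$ via Young absorption of the cross terms with $W^{n}$. Summing from $n=0$ to $M-1$ telescopes the weighted energy, and multiplying by $e^{-2\alpha t_M}$ and collecting the initial contributions (the quadratic data $\norm{\delta_x^- W^{0}}_h^{2}+(W_0^{0})^{2}+(W_N^{0})^{2}$ together with the quartic data $(W_0^{0})^{4}+(W_N^{0})^{4}$, which together form $\norm{W^{0}}_{1}^{2}$) produces the stated bound.

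The main obstacle I expect is the simultaneous bookkeeping of three distinct dissipation budgets—the interior Dirichlet energy $\norm{\delta_x^- W^{n+\theta}}_h^{2}$, the quadratic boundary feedback, and the quartic boundary feedback—so that enough of each survives to (i) drive the discrete Poincar\'e inequality and hence the exponential rate, (ii) preserve the explicit quartic $\tfrac{k\theta^{4}}{6c_0}(W_0^{n+1})^{4}$ demanded by the statement, and (iii) remain compatible with the sharp constant in the minimum inside \eqref{3.61}. Threading a single weight $\alpha$ through all three is the quantitative heart of the argument.
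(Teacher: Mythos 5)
Your proposal follows essentially the same route as the paper's proof: testing with $W^{n+\theta}$, the identity $(\delta_t^{+}W^{n},W^{n+\theta})=\tfrac12\delta_t^{+}\norm{W^{n}}^{2}+k(\theta-\tfrac12)\norm{\delta_t^{+}W^{n}}^{2}$ to isolate the $\theta\geq\tfrac12$ regime, Lemmas \ref{L4.1}--\ref{L4.2} plus Young on the boundary cubics, the exponential weight $e^{2\alpha t_{n+1}}$ combined with the discrete Poincar\'e inequality and condition \eqref{3.61} to keep $\beta^{*}>0$, and a telescoping sum. The only cosmetic difference is that you pass from $(W_{\bullet}^{n+\theta})^{4}$ to $\theta^{4}(W_{\bullet}^{n+1})^{4}$ via $(a+b)^{4}\leq 8(a^{4}+b^{4})$ rather than the paper's exact expansion with Young on the cross terms, which only changes the harmless numerical constant in front of the quartic sum.
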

	\begin{proof}
		By multiplying the first, second, and third equations of (\ref{th}) by $ \frac{h}{2} W_{0}^{n+\theta} ,$ $ h W_{i}^{n+\theta}, $ and $ \frac{h}{2}W_{N}^{n+\theta} $ respectively, and summing from $ i=1$ to $ N-1 $ in the second equation of (\ref{th}), we employ summation by parts to obtain
		\begin{align*}
			\left( \delta_{t}^{+}W^{n},W^{n+\theta}\right) &+\nu \norm{\delta_{x}^{-}W^{n+\theta}}_{h}^{2}+\nu g_{0}(W_{0}^{n+\theta})W_{0}^{n+\theta}- \nu g_{N}(W_{N}^{n+\theta})W_{N}^{n+\theta}\\& + w_{d}\left(\frac{h}{2}\delta_{x}^{+}W_{0}^{n+\theta}W_{0}^{n+\theta} +h \sum_{i=1}^{N-1}\delta_{x}^{c}W_{i}^{n+\theta}W_{i}^{n+\theta}+ \frac{h}{2}\delta_{x}^{-}W_{N}^{n+\theta}W_{N}^{n+\theta}\right)\\&=-\frac{h}{2}\phi(W^{n+\theta},W^{n+\theta})_{0}W_{0}^{n+\theta} -h\sum_{i=1}^{N-1}\phi(W^{n+\theta},W^{n+\theta})_{i}W_{i}^{n+\theta}\\&\quad-\frac{h}{2}\phi(W^{n+\theta},W^{n+\theta})_{N}W_{N}^{n+\theta}.
		\end{align*}
		From Lemmas \ref{L4.1}  and  \ref{L4.2}, it follows from the above equality that
		\begin{align}
			\label{3.5}
			\nonumber	\left( \delta_{t}^{+}W^{n},W^{n+\theta}\right)&+\nu  \norm{\delta_{x}^{-}W^{n+\theta}}_{h}^{2}+\left((c_{0}+w_{d})(W_{0}^{n+\theta})^{2}+\frac{2}{9c_{0}}(W_{0}^{n+\theta})^{4}\right)\\& \nonumber +\left ((c_{1}+w_{d})(W_{N}^{n+\theta})^{2}+\frac{2}{9c_{1}}(W_{N}^{n+\theta})^{4}\right)+\frac{w_{d}}{2} \left( (W_{N}^{n+\theta})^{2}-(W_{0}^{n+\theta})^{2}\right)\\&=\frac{1}{3}\left( (W_{0}^{n+\theta})^{3}-(W_{N}^{n+\theta})^{3}\right).
		\end{align}
		Applying the Young's inequality on the right hand side of $ (\ref{3.5}) $ yields
		\begin{align}
			\label{3.6}
			\frac{1}{3}(W_{0}^{n+\theta})^{3}\leq \frac{c_{0}}{2}(W_{0}^{n+\theta})^{2}+\frac{1}{18c_{0}}(W_{0}^{n+\theta})^{4}\\
			\label{3.7}
			\frac{1}{3}(W_{N}^{n+\theta})^{3}\leq \frac{c_{1}}{2}(W_{N}^{n+\theta})^{2}+\frac{1}{18c_{1}}(W_{N}^{n+\theta})^{4}.
		\end{align}
		Since we can write,
		\begin{align}
			\label{3.8}
			\left( 
			\delta_{t}^{+}W^{n},W^{n+\theta}\right)=\frac{1}{2}\delta_{t}^{+}\norm{W^{n}}^{2}+k(\theta-\frac{1}{2})\norm{\delta_{t}^{+}W^{n}}^{2},
		\end{align}
		substituting $ (\ref{3.6})-(\ref{3.8})$ into (\ref{3.5}), we arrive at

		\begin{align}
			\label{3.9}
			\nonumber	\delta_{t}^{+}\norm{W^{n}}^{2}&+2k(\theta-\frac{1}{2})\norm{\delta_{t}^{+}W^{n}}^{2}+2\nu \norm{\delta_{x}^{-}W^{n+\theta}}_{h}^{2}\\&+\left((c_{0}+w_{d})(W_{0}^{n+\theta})^{2}+\frac{1}{3c_{0}}(W_{0}^{n+\theta})^{4}\right)+\left((c_{1}+3w_{d})(W_{N}^{n+\theta})^{2}+\frac{1}{3c_{1}}(W_{N}^{n+\theta})^{4}\right)\leq 0.
		\end{align}
		When $ \theta\geq \frac{1}{2} $, from $(\ref{3.9})$ we promptly deduce
		\begin{align*}
			\delta_{t}^{+}\norm{W^{n}}^{2}+2\nu\norm{\delta_{x}^{-}W^{n+\theta}}_{h}^{2}&+\left((c_{0}+w_{d})(W_{0}^{n+\theta})^{2}+\frac{1}{3c_{0}}(W_{0}^{n+\theta})^{4}\right)\\& + \left((c_{1}+3w_{d})(W_{N}^{n+\theta})^{2}+\frac{1}{3c_{1}}(W_{N}^{n+\theta})^{4}\right)\leq 0.
		\end{align*}
	Multiplying by $ e^{2\alpha t_{n+1}}$ and using 
	\begin{align}
		\label{exp}
		e^{\alpha t_{n+1}}{\delta_{t}^{+}}W^{n}=e^{\alpha k}{\delta_{t}^{+}}\hat{W}^{n}-\frac{(e^{\alpha k}-1)}{k}\hat{W}^{n+1},
	\end{align} 
	it follows that
	\begin{align}
		\label{3.121}
			e^{2\alpha k}\delta_{t}^{+}\norm{\hat{W}^{n}}^{2}&-\frac{(e^{2\alpha k}-1)}{k}\norm{\hat{W}^{n+1}}^{2}+e^{2\alpha t_{n+1}}2\nu\norm{\delta_{x}^{-}{W}^{n+\theta}}_{h}^{2}
			+e^{2\alpha t_{n+1}}\Big((c_{0}+w_{d})({W}_{0}^{n+\theta})^{2}
			\nonumber\\&+\frac{1}{3c_{0}}(W_{0}^{n+\theta})^{4}\Big)
			 + e^{2\alpha t_{n+1}}\left((c_{1}+3w_{d})(W_{N}^{n+\theta})^{2}+\frac{1}{3c_{1}}(W_{N}^{n+\theta})^{4}\right)\leq 0,
	\end{align}
where $ \hat{W}^{n}=e^{\alpha t_{n}}W^{n}.$
	
	Note that 
	\[\norm{W_{x}^{n+\theta}}_{h}^{2}=\theta^{2}\norm{W_{x}^{n+1}}_{h}^{2}+(1-\theta)^{2}\norm{W_{x}^{n}}_{h}^{2}+2\theta(1-\theta)(W_{x}^{n+1}, W_{x}^{n})_{h}, 
	\]
	and \[(W_{i}^{n+\theta})^{2}=\theta^{2}(W_{i}^{n+1})^{2}+(1-\theta)^{2}(W_{i}^{n})^{2}+2\theta(1-\theta)W_{i}^{n+1}W_{i}^{n}, \quad i=0,N.
	\]
	Also
	\begin{align*}
		(W_{i}^{n+\theta})^{4}&=\Big(\theta^{2}(W_{i}^{n+1})^{2}+ (1-\theta)^{2}(W_{i}^{n})^{2}+2\theta (1-\theta)(W_{i}^{n+1})(W_{i}^{n})\Big)^{2},
		\\&=\theta^{4}(W_{i}^{n+1})^{4}+ (1-\theta)^{4}(W_{i}^{n})^{4}+6\theta^{2} (1-\theta)^{2}(W_{i}^{n+1})^{2}(W_{i}^{n})^{2}
		\\&\quad + 4\theta^{3} (1-\theta)(W_{i}^{n+1})^{3}(W_{i}^{n})+4\theta (1-\theta)^{3}(W_{i}^{n+1})(W_{i}^{n})^{3}, \quad i=0,N.
	\end{align*}
	Therefore, from \eqref{3.121} with Young's inequality, we obtain
	\begin{align*}
	  e^{2\alpha k}\delta_{t}^{+}\norm{\hat{W}^{n}}^{2}&-\frac{(e^{2\alpha k}-1)}{k}\norm{\hat{W}^{n+1}}^{2}+\theta^{2}\nu\norm{\delta_{x}^{-}\hat{W}^{n+1}}_{h}^{2}
	+\Big(\frac{\theta^{2}(c_{0}+w_{d})}{2}(\hat{W}_{0}^{n+1})^{2}
	\nonumber\\&+\frac{\theta^{4}e^{2\alpha t_{n+1}}}{6c_{0}}(W_{0}^{n+1})^{4}\Big)
	+ \left(\frac{\theta^{2}(c_{1}+3w_{d})}{2}(\hat{W}_{N}^{n+1})^{2}+\frac{\theta^{4}e^{2\alpha t_{n+1}}}{6c_{1}}(W_{N}^{n+1})^{4}\right)
	\nonumber\\& \leq (1-\theta)^{2}e^{2\alpha t_{n+1}}\Big(2\nu\norm{\delta_{x}^{-}{W}^{n}}_{h}^{2}+(c_{0}+w_{d}) ({W}_{0}^{n})^{2}+(c_{1}+3w_{d})(W_{N}^{n})^{2}
	\\&\quad + C (1-\theta)^{2}\big((W_{0}^{n})^{4}+(W_{N}^{n})^{4}\big)\Big).
	\end{align*}
Using the discrete Poincar\'e inequality and multiplying by $ e^{-2\alpha k} $ in the resulting inequality yields
\begin{align*}
	\delta_{t}^{+}\norm{\hat{W}^{n}}^{2}&+\Big(\theta^{2}\nu e^{-2\alpha k}-\frac{(1-e^{-2\alpha k})}{k} \Big)\norm{\delta_{x}^{-}\hat{W}^{n+1}}_{h}^{2}
	+\Big(\Big(\frac{e^{-2\alpha k} \theta^{2}(c_{0}+w_{d})}{2}-\frac{(1-e^{-2\alpha k})}{k}\Big)(\hat{W}_{0}^{n+1})^{2}
	\nonumber\\&+\frac{\theta^{4}e^{2\alpha t_{n}}}{6c_{0}}(W_{0}^{n+1})^{4}\Big)
	+ \left(\Big(\frac{e^{-2\alpha k}\theta^{2}(c_{1}+3w_{d})}{2}-\frac{(1-e^{-2\alpha k})}{k}\Big)(\hat{W}_{N}^{n+1})^{2}+\frac{\theta^{4}e^{2\alpha t_{n}}}{6c_{1}}(W_{N}^{n+1})^{4}\right)
	\nonumber\\& \leq (1-\theta)^{2}e^{2\alpha t_{n}}\Big(2\nu\norm{\delta_{x}^{-}{W}^{n}}_{h}^{2}+(c_{0}+w_{d}) ({W}_{0}^{n})^{2}+(c_{1}+3w_{d})(W_{N}^{n})^{2} 
	\\&\quad + C (1-\theta)^{2}\big((W_{0}^{n})^{4}+(W_{N}^{n})^{4}\big)\Big).
\end{align*}
		Summing from $ n=0 $ to $ n=M-1,$ to the above inequality and  multiplying by $ k ,$ we get
		\begin{align*}
			\norm{\hat{W}^{M}}^{2}+k\beta^{*}\sum_{n=0}^{M-1}\Big(\norm{\delta_{x}^{-}\hat{W}^{n+1}}_{h}^{2}&+(\hat{W}_{0}^{n+1})^{2}+(\hat{W}_{N}^{n+1})^{2}\Big)\\& +k\theta^{4}\sum_{n=0}^{M-1}e^{2\alpha t_{n}} \left(\frac{1}{6c_{0}}(W_{0}^{n+1})^{4}+ \frac{1}{6c_{1}}(W_{N}^{n+1})^{4}\right)\leq C \norm{W^{0}}_{1}^{2},
		\end{align*}
	where 
	\begin{align}
		\label{3.14}
		0<\beta^{*}=\min\{\Big(\theta^{2}\nu e^{-2\alpha k}&-\frac{(1-e^{-2\alpha k})}{k} \Big), \Big(\frac{e^{-2\alpha k} \theta^{2}(c_{0}+w_{d})}{2}-\frac{(1-e^{-2\alpha k})}{k}\Big),
		\nonumber\\&\quad  \Big(\frac{e^{-2\alpha k}\theta^{2}(c_{1}+3w_{d})}{2}-\frac{(1-e^{-2\alpha k})}{k}\Big) \}.
	\end{align}
Choose $ k_{0} $ such that \eqref{3.61} holds for $ 0<k\leq k_{0}.$ The proof is completed after multiplying by $ e^{-2\alpha t_{M}}$ in the resulting inequality.
	\end{proof}
	
	The lemma presented below aids in the proof of stability and error analysis for the case $ \theta\in [0,\frac{1}{2}). $
	\begin{lemma}
		\label{L4.4} 
		For $ W \in H,$ the following results hold:
		\begin{itemize}
			\item[(1).]
			\begin{align*}
				\frac{h}{2}(\delta_{x}^{2}W_{0}^{n})^{2}+h\sum_{i=1}^{N-1}(\delta_{x}^{2}W_{i}^{n})^{2}+\frac{h}{2}(\delta_{x}^{2}W_{N}^{n})^{2}\leq \frac{6}{h^{2}}\norm{\delta_{x}^{-}W^{n}}_{h}^{2}+\frac{4}{h}(g_{0}(W_{0}^{n}))^{2}+\frac{4}{h}(g_{N}(W_{N}^{n}))^{2},
			\end{align*}
			for all $ n=0,1, \ldots , M. $
			
			\item [(2).]\begin{align*}
				\frac{h}{2}(\delta_{x}^{+}W_{0}^{n})^{2}+ h\sum_{i=1}^{N-1}(\delta_{x}^{c}W_{i}^{n})^{2}+\frac{h}{2}(\delta_{x}^{-}W_{N}^{n})^{2}\leq \norm{\delta_{x}^{-}W^{n}}_{h}^{2},
			\end{align*}	
			for all $ n=0, 1, \ldots, M. $	 
		\end{itemize}
		
	\end{lemma}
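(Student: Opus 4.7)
\textbf{Proof proposal for Lemma \ref{L4.4}.}

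The plan is to reduce everything to sums of squared first-order backward differences $(\delta_x^-W_i)^2$, so that each side can be expressed directly in terms of $\|\delta_x^-W\|_h^2$. I will exploit two purely algebraic identities that relate the different discrete derivatives on a uniform mesh:
\begin{align*}
\delta_x^+ W_i = \delta_x^- W_{i+1}, \qquad
\delta_x^c W_i = \tfrac{1}{2}\bigl(\delta_x^- W_{i+1} + \delta_x^- W_i\bigr), \qquad
\delta_x^2 W_i = \tfrac{1}{h}\bigl(\delta_x^- W_{i+1} - \delta_x^- W_i\bigr),
\end{align*}
valid for interior $i$. Combined with the scalar inequality $(a\pm b)^2\le 2a^2+2b^2$, each piece on the left-hand side of (1) or (2) is controlled by a combination of $(\delta_x^- W_i)^2$'s, plus the boundary contributions from $g_0(W_0^n)$ and $g_N(W_N^n)$ that appear inside the definition of $\delta_x^2 W_0^n$ and $\delta_x^2 W_N^n$.

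For part (2), I would first rewrite $\frac{h}{2}(\delta_x^+W_0)^2=\frac{h}{2}(\delta_x^-W_1)^2$ and $\frac{h}{2}(\delta_x^-W_N)^2$ as they stand. For each interior index $i=1,\ldots,N-1$, the identity for $\delta_x^c W_i$ together with $(a+b)^2\le 2a^2+2b^2$ gives
\begin{align*}
h(\delta_x^c W_i)^2 \le \tfrac{h}{2}\bigl((\delta_x^- W_{i+1})^2+(\delta_x^- W_i)^2\bigr).
\end{align*}
Summing over $i=1,\ldots,N-1$ and reindexing the $\delta_x^-W_{i+1}$ sum to run over $j=2,\ldots,N$, the combination with the two explicit boundary contributions produces the coefficient $h$ on every $(\delta_x^-W_i)^2$ for $i=1,\ldots,N$, which is exactly $\|\delta_x^-W\|_h^2$.

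For part (1), the interior differences are handled by
\begin{align*}
(\delta_x^2 W_i)^2 \le \tfrac{2}{h^2}\bigl((\delta_x^- W_{i+1})^2+(\delta_x^- W_i)^2\bigr),
\end{align*}
whose summation contributes at most $\tfrac{2}{h}\bigl[(\delta_x^-W_1)^2+2\sum_{i=2}^{N-1}(\delta_x^-W_i)^2+(\delta_x^-W_N)^2\bigr]$. For the endpoint $\tfrac{h}{2}(\delta_x^2 W_0^n)^2$, I substitute the boundary definition and apply $(a-b)^2\le 2a^2+2b^2$ to separate $\delta_x^+ W_0 = \delta_x^- W_1$ from $g_0(W_0)$, producing $\tfrac{4}{h}(\delta_x^- W_1)^2+\tfrac{4}{h}(g_0(W_0^n))^2$. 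The endpoint at $N$ is symmetric. Adding all three contributions yields $\tfrac{6}{h}(\delta_x^-W_1)^2+\tfrac{4}{h}\sum_{i=2}^{N-1}(\delta_x^-W_i)^2+\tfrac{6}{h}(\delta_x^-W_N)^2$ plus the boundary pieces involving $g_0,g_N$, and bounding every coefficient by $6$ gives the desired $\tfrac{6}{h^2}\|\delta_x^-W\|_h^2$.

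The calculation is essentially linear algebra on the mesh; the only subtle point is that the constant $6$ in part (1) comes from adding the $\tfrac{2}{h}$ contribution from the interior bound to the $\tfrac{4}{h}$ contribution from each endpoint at the extreme indices $i=1$ and $i=N$. A naive separate bounding of boundary and interior pieces, using $h(\delta_x^-W_1)^2\le\|\delta_x^-W\|_h^2$, would inflate the constant; the trick is to avoid that and instead combine term by term before taking a maximum over coefficients.
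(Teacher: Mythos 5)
Your proposal is correct and follows essentially the same route as the paper: rewrite the second (respectively, centered) differences as differences (respectively, sums) of first-order backward differences, apply $(a\pm b)^{2}\leq 2(a^{2}+b^{2})$, and track the coefficients of each $(\delta_{x}^{-}W_{i})^{2}$, with the boundary terms at $i=0,N$ separated from $g_{0},g_{N}$ in the same way. Your accounting of how the constant $6$ arises (interior contribution $\tfrac{2}{h}$ plus endpoint contribution $\tfrac{4}{h}$ at $i=1$ and $i=N$) is accurate and in fact spells out a detail the paper's terse proof leaves implicit.
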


	\begin{proof}(1). For any $ W\in H,$ 
		\begin{align*}
			\frac{h}{2}(\delta_{x}^{2}W_{0}^{n})^{2}+h\sum_{i=1}^{N-1}(\delta_{x}^{2}W_{i}^{n})^{2}&+\frac{h}{2}(\delta_{x}^{2}W_{N}^{n})^{2}\\&=\frac{h}{2}(\delta_{x}^{2}W_{0}^{n})^{2}+ \frac{1}{h^{3}}\sum_{i=1}^{N-1}(W_{i+1}^{n}-2W_{i}^{n}+W_{i-1}^{n})^{2}+\frac{h}{2}(\delta_{x}^{2}W_{N}^{n})^{2},\\&=\frac{h}{2}(\delta_{x}^{2}W_{0}^{n})^{2}+\frac{1}{h^{3}}\sum_{i=1}^{N-1} \left[(W_{i+1}^{n}-W_{i}^{n})-(W_{i}^{n}-W_{i-1}^{n}) \right]^{2}+\frac{h}{2}(\delta_{x}^{2}W_{N}^{n})^{2}.
		\end{align*}
		Use of the inequality $ (a-b)^{2}\leq 2(a^{2}+b^{2}) $ to above equation, we obtain
		\begin{align*}
			\frac{h}{2}(\delta_{x}^{2}W_{0}^{n})^{2}+h\sum_{i=1}^{N-1}(\delta_{x}^{2}W_{i}^{n})^{2}&+\frac{h}{2}(\delta_{x}^{2}W_{N}^{n})^{2}\leq \frac{6}{h^{2}}\norm{\delta_{x}^{-}W^{n}}_{h}^{2}+\frac{4}{h}(g_{0}(W_{0}^{n}))^{2}+\frac{4}{h}(g_{N}(W_{N}^{n}))^{2}.
		\end{align*}
		For the proof of second part of the lemma, we obtain any $ W\in H,$
		\begin{align*}
			\frac{h}{2}(\delta_{x}^{+}W_{0}^{n})^{2}+ h\sum_{i=1}^{N-1}(\delta_{x}^{c}W_{i}^{n})^{2}+\frac{h}{2}(\delta_{x}^{-}W_{N}^{n})^{2}&=\frac{h}{2}(\delta_{x}^{+}W_{0}^{n})^{2}+\frac{h}{2}(\delta_{x}^{-}W_{N}^{n})^{2}\\&+\frac{1}{4h}\sum_{i=1}^{N-1}\left[(W_{i+1}^{n}-W_{i}^{n})-(W_{i}^{n}-W_{i-1}^{n})\right]^{2}.
		\end{align*}
		Use of the inequality $ (a-b)^{2}\leq 2(a^{2}+b^{2}) $ to above equation, we get
		\begin{align*}
			\frac{h}{2}(\delta_{x}^{+}W_{0}^{n})^{2}+ h\sum_{i=1}^{N-1}(\delta_{x}^{c}W_{i}^{n})^{2}+\frac{h}{2}(\delta_{x}^{-}W_{N}^{n})^{2}\leq \norm{\delta_{x}^{-}W^{n}}_{h}^{2}.
		\end{align*}
		The proof is completed.
	\end{proof}
	\begin{remark}
		From  \cite[Chapter 2]{samarskii2001theory}, we write
		\[\norm{W^{n}}_{\infty}^{2}\leq 2(\norm{\delta_{x}W^{n}}_{h}^{2}+ (W_{j}^{n})^{2}), \quad j=0, N.
		\]
	\end{remark}
	
	\begin{lemma}
		\label{L4.5} 
		Assume that $ 0\leq \alpha \leq \frac{\theta^{2}}{4}\min\{\beta_{1}, \beta_{2}, \beta_{4}\}, $ where $\theta \in [0,\frac{1}{2})$. Let $ k_{0}>0$, there holds for $ 0<k\leq k_{0} $
		\begin{align}
			\label{3.15}
			e^{2\alpha k}\leq 1+ \frac{k\theta^{2}}{2}\min\{\beta_{1}, \beta_{2}, \beta_{4}\}.
		\end{align}
		If $\theta \in [0,\frac{1}{2})$, then under the condition,
		$$ k< \min \{k_{1}, k_{2}, k_{3}, k_{4}, k_{5}\}, $$ where 
		\begin{align}
			\label{k42}
			\begin{cases}	
				\vspace{0.2cm}
				k_{1}= \frac{12\nu h^{2}}{(1-2\theta)(108\nu^2+ 18h^{2}w_{d}^{2}+19h^{2}\norm{W^{n+\theta}}_{\infty}^{2})}, \\\vspace{0.2cm}
				k_{2}= \frac{h}{(1-2\theta)24(c_{0}+w_{d})}, \medspace \ \ k_{3}= \frac{9c_{0}h}{(1-2\theta)32\norm{W^{n+\theta}}_{\infty}^{2}},\\
				k_{4}=\frac{h}{(1-2\theta)24(c_{1}+w_{d})}, \medspace \ \ k_{5}= \frac{9c_{1}h}{(1-2\theta)32\norm{W^{n+\theta}}_{\infty}^{2}},
			\end{cases}
		\end{align}	
		the difference scheme (\ref{th}) is  stable for all $  0\leq n\leq M-1 $
		\begin{align*}
			\norm{{W}^{M}}^{2}&+ke^{-2\alpha t_{M}}\sum_{n=0}^{M-1}\Big(\big(\frac{\theta^{2}\beta_{1}}{2}e^{-2\alpha k}-\frac{(1-e^{-2\alpha k})}{k} \big)\norm{\delta_{x}^{-}\hat{W}^{n+1}}_{h}^{2}+\big(\frac{\theta^{2}\beta_{2}}{2}e^{-2\alpha k}-\frac{(1-e^{-2\alpha k})}{k} \big)(\hat{W}_{0}^{n+1})^{2}
			\\&+\frac{\theta^{4}\beta_{3}}{2}e^{2\alpha t_{n}} (W_{0}^{n+1})^{4}+\big(\frac{\theta^{2}\beta_{4}}{2}e^{-2\alpha k}-\frac{(1-e^{-2\alpha k})}{k} \big)(\hat{W}_{N}^{n+1})^{2}+\frac{\theta^{4}\beta_{5}}{2}e^{2\alpha t_{n}}({W}_{N}^{n+1})^{4}\Big)
			\\&\leq C e^{-2\alpha t_{M}}\norm{W^{0}}_{1}^{2}.
		\end{align*}
		where 
		\begin{align}
			\label{k}
			\begin{cases}
				\beta_{1}= 2\nu-k(1-2\theta)\left(\frac{18\nu^{2}}{h^2}+ 3w_{d}^{2}+ \frac{19}{6}\norm{W^{n+\theta}}_{\infty}^{2}\right),\\
				\beta_{2}= (c_{0}+w_{d})-k(1-2\theta)\frac{24}{h}(c_{0}+w_{d})^{2}, \medspace \ \ \beta_{3}= \frac{1}{3c_{0}}-k(1-2\theta)\frac{32}{27c_{0}^{2}h}\norm{W^{n+\theta}}_{\infty}^{2},\\
				\beta_{4}=(c_{1}+w_{d})-k(1-2\theta)\frac{24}{h}(c_{1}+w_{d})^{2}, \medspace \ \ \beta_{5}= \frac{1}{3c_{1}}-k(1-2\theta)\frac{32}{27c_{1}^{2}h}\norm{W^{n+\theta}}_{\infty}^{2}.
			\end{cases}
		\end{align}
		\begin{proof}To prove stability when $ 0\leq\theta<\frac{1}{2},$ 
			from (\ref{3.9}) we can rewrite
			\begin{align}
				\label{3.10}
				\nonumber	\delta_{t}^{+}\norm{W^{n}}^{2}&+2\nu \norm{\delta_{x}^{-}W^{n+\theta}}_{h}^{2}  \\&+\left((c_{0}+w_{d})(W_{0}^{n+\theta})^{2}+\frac{1}{3c_{0}}(W_{0}^{n+\theta})^{4}\right)+\left((c_{1}+3w_{d})(W_{N}^{n+\theta})^{2}+\frac{1}{3c_{1}}(W_{N}^{n+\theta})^{4}\right)	\nonumber\\&\leq k(1-2\theta)\norm{\delta_{t}^{+}W^{n}}^{2}.
			\end{align}
			Using the definition of discrete $ L^{2}-$norm, we express
			\begin{align}
				\label{L1}
				\norm{\delta_{t}^{+}W^{n}}^{2}=\frac{h}{2}(\delta_{t}^{+}W_{0}^{n})^{2}+ h\sum_{i=1}^{N-1}(\delta_{t}^{+}W_{i}^{n})^{2}+\frac{h}{2}(\delta_{t}^{+}W_{N}^{n})^{2}.
			\end{align}

			Therefore, from the difference scheme (\ref{th}) and (\ref{L1}) with the inequality $ (a+b+c)^{2}\leq 3 (a^{2}+b^{2}+c^{2}) $, we obtain
			\begin{align}
				\label{k22}
				\nonumber	\norm{\delta_{t}^{+}W^{n}}^{2}&\leq \frac{12\nu^{2}}{2h}\left(\delta_{x}^{+}W_{0}^{n+\theta}-g_{0}(W_{0}^{n+\theta})\right)^{2}+3h\sum_{i=1}^{N-1} \nu^{2}(\delta_{x}^{2}W_{i}^{n+\theta})^{2}+\frac{12\nu^{2}}{2h}\left(\delta_{x}^{-}W_{N}^{n+\theta}-g_{N}(W_{N}^{n+\theta})\right)^{2}	\nonumber\\&+\frac{3h}{2}w_{d}^{2}(\delta_{x}^{+}W_{0}^{n+\theta})^{2}+3hw_{d}^{2}\sum_{i=1}^{N-1}(\delta_{x}^{c}W_{i}^{n+\theta})^{2}+\frac{3h}{2}w_{d}^{2}(\delta_{x}^{-}W_{N}^{n+\theta})^{2}	\nonumber\\&+\frac{3h}{2}(\phi(W^{n+\theta}, W^{n+\theta})_{0})^{2}+3h\sum_{i=1}^{N-1}(\phi(W^{n+\theta}, W^{n+\theta})_{i})^{2}+\frac{3h}{2}(\phi(W^{n+\theta}, W^{n+\theta})_{N})^{2}.
			\end{align}
			Using Lemma \ref{L4.4}, the first term on the right hand side of (\ref{k22}) is bounded by	
			\begin{align*}
				\frac{12\nu^{2}}{2h}\left(\delta_{x}^{+}W_{0}^{n+\theta}-g_{0}(W_{0}^{n+\theta})\right)^{2}&+3h\sum_{i=1}^{N-1} \nu^{2}(\delta_{x}^{2}W_{i}^{n+\theta})^{2}+\frac{12\nu^{2}}{2h}\left(\delta_{x}^{-}W_{N}^{n+\theta}-g_{N}(W_{N}^{n+\theta})\right)^{2}\\&\leq \frac{18\nu^{2}}{h^{2}}\norm{\delta_{x}^{-}W^{n+\theta}}_{h}^{2}+\frac{24}{h}\left((c_{0}+w_{d})^{2}(W_{0}^{n+\theta})^{2}+(c_{1}+w_{d})^{2}(W_{N}^{n+\theta})^{2}\right)\\&+\frac{32}{27h}\norm{W^{n+\theta}}_{\infty}^{2}\left(\frac{1}{c_{0}^{2}}(W_{0}^{n+\theta})^{4} +\frac{1}{c_{1}^{2}}(W_{N}^{n+\theta})^{4} \right).
			\end{align*}
			Combining the second term on the right hand side of (\ref{k22}) with Lemma \ref{L4.4} gives
			\begin{align*}
				\frac{3h}{2}w_{d}^{2}(\delta_{x}^{+}W_{0}^{n+\theta})^{2}+3hw_{d}^{2}\sum_{i=1}^{N-1}(\delta_{x}^{c}W_{i}^{n+\theta})^{2}+\frac{3h}{2}w_{d}^{2}(\delta_{x}^{-}W_{N}^{n+\theta})^{2}\leq 3w_{d}^{2}\norm{\delta_{x}^{-}W^{n+\theta}}_{h}^{2}.
			\end{align*}
			The third term on the right hand side of (\ref{k22}) and using $(\delta_{x}^{-}W_{0}^{n+\theta})^{2}+(\delta_{x}^{-}W_{N}^{n+\theta})^{2} \leq \frac{1}{h} \norm{\delta_{x}^{-}W^{n+\theta}}_{h}^{2} $, it follows that
			\begin{align*}
				\frac{3h}{2}(\phi(W^{n+\theta}, W^{n+\theta})_{0})^{2}+3h\sum_{i=1}^{N-1}(\phi(W^{n+\theta}, W^{n+\theta})_{i})^{2}&+\frac{3h}{2}(\phi(W^{n+\theta}, W^{n+\theta})_{N})^{2}\\&\leq\frac{19}{6}\norm{W^{n+\theta}}_{\infty}^{2} \norm{\delta_{x}^{-}W^{n+\theta}}_{h}^{2}.
			\end{align*}
			Substituting these values in (\ref{k22}), we arrive at
			\begin{align}
				\label{3.11}
				\nonumber	\norm{\delta_{t}^{+}W^{n}}^{2}&\leq \frac{18\nu^{2}}{h^{2}}\norm{\delta_{x}^{-}W^{n+\theta}}_{h}^{2}+\frac{24}{h}\left((c_{0}+w_{d})^{2}(W_{0}^{n+\theta})^{2}+(c_{1}+w_{d})^{2}(W_{N}^{n+\theta})^{2}\right)\\&+\frac{32 }{27h}\norm{W^{n+\theta}}_{\infty}^{2}\left(\frac{1}{c_{0}^{2}}(W_{0}^{n+\theta})^{4} +\frac{1}{c_{1}^{2}}(W_{N}^{n+\theta})^{4} \right)+3w_{d}^{2}\norm{\delta_{x}^{-}W^{n+\theta}}_{h}^{2} \nonumber \\&+ \frac{19}{6}\norm{W^{n+\theta}}_{\infty}^{2} \norm{\delta_{x}^{-}W^{n+\theta}}_{h}^{2}.
			\end{align}
			
			The use of $ (\ref{3.11}) $ and  $ (\ref{3.10}) $ gives
			
			\begin{align*}
				\delta_{t}^{+}\norm{W^{n}}^{2}&+\left[2\nu-k(1-2\theta)\left(\frac{18\nu^{2}}{h^2}+ 3w_{d}^{2}+ \frac{19}{6}\norm{W^{n+\theta}}_{\infty}^{2}\right) \right]\norm{\delta_{x}^{-}W^{n+\theta}}_{h}^{2}\\&+
				\left((c_{0}+w_{d})-k(1-2\theta)\frac{24}{h}(c_{0}+w_{d})^{2} \right)(W_{0}^{n+\theta})^{2}\\&+\left(\frac{1}{3c_{0}}-k(1-2\theta)\frac{32}{27c_{0}^{2}h}\norm{W^{n+\theta}}_{\infty}^{2}\right)(W_{0}^{n+\theta})^{4}\\&+
				\left((c_{1}+w_{d})-k(1-2\theta)\frac{24}{h}(c_{1}+w_{d})^{2} \right)(W_{N}^{n+\theta})^{2}\\&+\left(\frac{1}{3c_{1}}-k(1-2\theta)\frac{32}{27c_{1}^{2}h}\norm{W^{n+\theta}}_{\infty}^{2}\right)(W_{N}^{n+\theta})^{4}\leq 0.
			\end{align*}
			Therefore, we obtain
			\begin{align}
				\label{3.12}
				\delta_{t}^{+}\norm{W^{n}}^{2}+ \left(\beta_{1}\norm{\delta_{x}^{-}W^{n+\theta}}_{h}^{2}+\beta_{2}(W_{0}^{n+\theta})^{2}+\beta_{3}(W_{0}^{n+\theta})^{4}+\beta_{4}(W_{N}^{n+\theta})^{2}+\beta_{5}(W_{N}^{n+\theta})^{4}\right)\leq 0,
			\end{align}
			whenever, $ k< \min \{k_{1}, k_{2}, k_{3}, k_{4}, k_{5} \},$ where $ k_{1}, k_{2}, k_{3}, k_{4} $ and $k_{5} $ are given in (\ref{k42}), it ensure that each $ \beta_{i} ( i= 1, 2, 3, 4, 5) $ is positive, as define in (\ref{k}).
			 
			 Again, multiplying by $ e^{2\alpha t_{n+1}}$ in \eqref{3.12}, we get from \eqref{3.12} and \eqref{exp}
			 \begin{align*}
			 	e^{2\alpha k}\delta_{t}^{+}\norm{\hat{W}^{n}}^{2}-\frac{(e^{2\alpha k}-1)}{k}\norm{\hat{W}^{n+1}}^{2}&+ e^{2\alpha t_{n+1}} \Big(\beta_{1}\norm{\delta_{x}^{-}W^{n+\theta}}_{h}^{2}+\beta_{2}(W_{0}^{n+\theta})^{2}
			 	\\&+\beta_{3}(W_{0}^{n+\theta})^{4}+\beta_{4}(W_{N}^{n+\theta})^{2}+\beta_{5}(W_{N}^{n+\theta})^{4}\Big)\leq 0.
			 \end{align*}
			Following the proof of Lemma \ref{L4.3} and summing from $ n=0 $ to $ n=M-1,$ in $ (\ref{3.12}) $, we get after   multiplying by $ k $ in the resulting inequality	
			\begin{align*}
				\norm{\hat{W}^{M}}^{2}&+k\sum_{n=0}^{M-1}\Big(\big(\frac{\theta^{2}\beta_{1}}{2}e^{-2\alpha k}-\frac{(1-e^{-2\alpha k})}{k} \big)\norm{\delta_{x}^{-}\hat{W}^{n+1}}_{h}^{2}+\big(\frac{\theta^{2}\beta_{2}}{2}e^{-2\alpha k}-\frac{(1-e^{-2\alpha k})}{k} \big)(\hat{W}_{0}^{n+1})^{2}
				\\&+\frac{\theta^{4}\beta_{3}}{2}e^{2\alpha t_{n}} (W_{0}^{n+1})^{4}+\big(\frac{\theta^{2}\beta_{4}}{2}e^{-2\alpha k}-\frac{(1-e^{-2\alpha k})}{k} \big)(\hat{W}_{N}^{n+1})^{2}+\frac{\theta^{4}\beta_{5}}{2}e^{2\alpha t_{n}}({W}_{N}^{n+1})^{4}\Big)
				\leq C \norm{W^{0}}_{1}^{2}.
			\end{align*}
			 Select $ k_{0} $ such that \eqref{3.15} holds for $ 0<k\leq k_{0}.$ This completes the proof after multiplying by $ e^{-2\alpha t_{M}} $.
		\end{proof}		
	\end{lemma}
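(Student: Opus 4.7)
The plan is to mirror the structure of Lemma \ref{L4.3}, but now handle the troublesome term $2k(\theta-\tfrac12)\|\delta_t^+ W^n\|^2$ in equation (\ref{3.9}) which is \emph{negative} when $\theta<\tfrac12$. Specifically, I would start from (\ref{3.9}) and move this term to the right-hand side, obtaining
\begin{align*}
\delta_t^+\|W^n\|^2 + 2\nu\|\delta_x^- W^{n+\theta}\|_h^2
+ \bigl((c_0+w_d)(W_0^{n+\theta})^2+\tfrac{1}{3c_0}(W_0^{n+\theta})^4\bigr)&
\\[-1pt] +\bigl((c_1+3w_d)(W_N^{n+\theta})^2+\tfrac{1}{3c_1}(W_N^{n+\theta})^4\bigr)
&\leq k(1-2\theta)\|\delta_t^+ W^n\|^2,
\end{align*}
which is the role of (\ref{3.10}). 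The strategy is then to use the difference scheme (\ref{th}) itself to rewrite $\delta_t^+ W^n$ in terms of spatial quantities at level $n+\theta$, and absorb the resulting terms into the coercive left-hand side provided $k$ is small enough.

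Concretely, I would split $\|\delta_t^+ W^n\|^2$ into its interior and boundary contributions as in (\ref{L1}), substitute the three lines of (\ref{th}) for $\delta_t^+ W_i^n$, and use the inequality $(a+b+c)^2\le 3(a^2+b^2+c^2)$ to split into three pieces: a viscous part (involving $\delta_x^2$ and the boundary data $g_0,g_N$), a drift part (involving $w_d\delta_x W$), and a nonlinear part (involving $\phi(W,W)$). Lemma \ref{L4.4}(1) bounds the viscous interior squared-sum by $(6/h^2)\|\delta_x^- W\|_h^2$ plus $g_0^2/h$ and $g_N^2/h$ contributions; Lemma \ref{L4.4}(2) controls the drift part by $\|\delta_x^- W\|_h^2$; and the nonlinear part is bounded via $\|W^{n+\theta}\|_\infty^2 \|\delta_x^- W^{n+\theta}\|_h^2$ together with the boundary inequality $(\delta_x^- W_0)^2+(\delta_x^- W_N)^2\le h^{-1}\|\delta_x^- W\|_h^2$. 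Expanding $g_0(W_0^{n+\theta})^2$ and $g_N(W_N^{n+\theta})^2$ using (\ref{eq1})--(\ref{eq2}) produces quadratic and quartic boundary terms that are controlled by $(c_0+w_d)^2(W_0^{n+\theta})^2$, $\|W^{n+\theta}\|_\infty^2(W_0^{n+\theta})^4/c_0^2$ and their $i=N$ analogues.

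Assembling these bounds yields (\ref{3.11}); plugging it into (\ref{3.10}) collects the coefficient of each term on the left and defines the five constants $\beta_1,\ldots,\beta_5$ as in (\ref{k}). Imposing $k<\min\{k_1,\ldots,k_5\}$ from (\ref{k42}) ensures every $\beta_j>0$, which is exactly the CFL-type condition. From here the argument is essentially the same as in Lemma \ref{L4.3}: multiply the inequality by $e^{2\alpha t_{n+1}}$, use the identity (\ref{exp}) to convert $e^{2\alpha t_{n+1}}\delta_t^+ W^n$ into $e^{2\alpha k}\delta_t^+ \hat W^n-\tfrac{e^{2\alpha k}-1}{k}\hat W^{n+1}$, apply the discrete Poincar\'e inequality from Lemma 3.1 to absorb the $\|\hat W^{n+1}\|^2$ term into the quantities multiplied by the $\beta_j$, and invoke the smallness assumption (\ref{3.15}) on $\alpha$ and $k$ to guarantee the resulting effective coefficients $\tfrac{\theta^2\beta_j}{2}e^{-2\alpha k}-\tfrac{1-e^{-2\alpha k}}{k}$ are nonnegative. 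Finally, telescope by summing $n=0,\ldots,M-1$, multiply by $k$ and by $e^{-2\alpha t_M}$, and use $\|W^0\|^2\le C\|W^0\|_1^2$ for the initial data.

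The main obstacle I anticipate is the bookkeeping in the step that bounds $\|\delta_t^+W^n\|^2$ by the scheme: each of the three pieces has to be squared, split with $(a+b+c)^2\le 3(a^2+b^2+c^2)$, and then matched \emph{exactly} against a term already appearing on the left of (\ref{3.10}) so that the absorption produces positive coefficients. In particular, because $g_0,g_N$ contain both linear and cubic pieces in $W_0,W_N$, one must be careful that the quartic boundary terms they generate are paired with $\tfrac{1}{3c_0}(W_0^{n+\theta})^4$ on the left (giving $\beta_3$) rather than with the quadratic terms, and similarly that the $\delta_x^2$ interior piece lands against $2\nu\|\delta_x^- W^{n+\theta}\|_h^2$ (giving $\beta_1$). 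Once the five constants are correctly identified, the rest of the proof is a straightforward adaptation of the exponential-weighting argument used for the unconditionally stable case.
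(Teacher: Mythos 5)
Your proposal is correct and follows essentially the same route as the paper's own proof: moving the $k(1-2\theta)\norm{\delta_{t}^{+}W^{n}}^{2}$ term to the right of \eqref{3.9}, bounding it through the scheme \eqref{th} with the splitting $(a+b+c)^{2}\leq 3(a^{2}+b^{2}+c^{2})$ and Lemma \ref{L4.4}, absorbing into the coercive left-hand side to define $\beta_{1},\ldots,\beta_{5}$ under the CFL-type restriction \eqref{k42}, and then repeating the exponential-weighting and summation argument of Lemma \ref{L4.3}. The bookkeeping concerns you flag (pairing the quartic boundary contributions from $g_{0},g_{N}$ with $\frac{1}{3c_{0}}(W_{0}^{n+\theta})^{4}$ to form $\beta_{3}$, and the $\delta_{x}^{2}$ piece with $2\nu\norm{\delta_{x}^{-}W^{n+\theta}}_{h}^{2}$ to form $\beta_{1}$) are exactly how the paper resolves them.
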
	
\begin{remark} Using summation by parts, we arrive at
	\label{r3.2}
	\begin{align*}
		h\sum_{i=1}^{N-1}\delta_{x}^{2}W_{i}^{n+\theta}\delta_{t}^{+}W_{i}^{n}&=\frac{1}{h}\sum_{i=1}^{N-1}\Big(\big(W_{i+1}^{n+\theta}-W_{i}^{n+\theta}\big)-\big(W_{i}^{n+\theta}-W_{i-1}^{n+\theta}\big)\Big)\delta_{t}^{+}W_{i}^{n},
		\\&=\sum_{i=1}^{N-1}\Big(\delta_{x}^{+}W_{i}^{n+\theta}-\delta_{x}^{-}W_{i}^{n+\theta} \Big)\delta_{t}^{+}W_{i}^{n},
		\\&=\sum_{i=2}^{N}\delta_{x}^{-}W_{i}^{n+\theta}\delta_{t}^{+}W_{i-1}^{n}-\sum_{i=1}^{N-1}\delta_{x}^{-}W_{i}^{n+\theta}\delta_{t}^{+}W_{i}^{n}.
	\end{align*}
Therefore, we can write
\begin{align*}
	h\sum_{i=1}^{N-1}\delta_{x}^{2}W_{i}^{n+\theta}\delta_{t}^{+}W_{i}^{n}=-\delta_{x}^{+}W_{0}^{n+\theta}\delta_{t}^{+}W_{0}^{n}-\Big(\delta_{t}^{+}(\delta_{x}^{-}W^{n}), \delta_{x}^{-}W^{n+\theta} \Big)_{h}+\delta_{x}^{-}W_{N}^{n+\theta}\delta_{t}^{+}W_{N}^{n}.
\end{align*}
\end{remark}
The following lemma deals with {\it{a priori}}  exponential bounds of the approximate solution in  the \(H^{1}\) and \(L^{\infty}\)-norms.
	\begin{lemma}
		\label{L3.6}
		Let \(W^{0}\in H^{1}(0,1)\) and \(\theta\in [\frac{1}{2}, 1]\). Then, there holds
		\begin{align*}
			\nu \norm{\delta_{x}^{-}W^{M}}_{h}^{2}&+ (c_{0}+w_{d})(W_{0}^{M})^{2}+ (c_{1}+w_{d})(W_{N}^{M})^{2}+\frac{1}{18c_{0}}(W_{0}^{M})^{4}+\frac{1}{18c_{1}}(W_{N}^{M})^{4}
			\\&+ Cke^{-2\alpha t_{M}}\sum_{n=0}^{M-1}e^{2\alpha t_{n}} \norm{\delta_{t}^{+}W^{n}}^{2}
			\leq C\norm{W^{0}}_{1}^{2}e^{-2\alpha t_{M}},
		\end{align*}
		and 
		\begin{align*}
			\norm{W^{n}}_{\infty}^{2}\leq  C\norm{W^{0}}_{1}^{2}e^{-2\alpha t_{M}}, \quad \forall \ 0\leq n\leq M.
		\end{align*}
	\end{lemma}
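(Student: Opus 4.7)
My plan is to test the scheme (\ref{th}) with $\delta_t^+ W^n$ to obtain an $H^1$-type discrete energy estimate, and then deduce the $L^\infty$-bound via a discrete Sobolev embedding. Concretely, I multiply the three equations in (\ref{th}) (for $i=0$, interior, and $i=N$) by $\tfrac{h}{2}\delta_t^+ W_0^n$, $h\delta_t^+ W_i^n$, and $\tfrac{h}{2}\delta_t^+ W_N^n$, respectively, and sum. Using the summation-by-parts identity of Remark \ref{r3.2} for the Laplacian contribution in the interior, the boundary fluxes $-\nu\delta_x^+W_0^{n+\theta}\delta_t^+W_0^n$ and $\nu\delta_x^- W_N^{n+\theta}\delta_t^+W_N^n$ from the $i=0$ and $i=N$ equations cancel against their counterparts from Remark \ref{r3.2}, leaving
\begin{align*}
\|\delta_t^+ W^n\|^2 &+ \nu\bigl(\delta_t^+\delta_x^- W^n,\delta_x^- W^{n+\theta}\bigr)_h + \nu g_0(W_0^{n+\theta})\delta_t^+W_0^n - \nu g_N(W_N^{n+\theta})\delta_t^+W_N^n \\
&+ w_d\,\mathcal{C}^n + \mathcal{N}^n = 0,
\end{align*}
where $\mathcal{C}^n$ and $\mathcal{N}^n$ denote the weighted convective and nonlinear contributions.

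To extract the $\delta_t^+$ of the target quantities, I use the algebraic identity $(\delta_t^+ X^n, X^{n+\theta}) = \tfrac12\delta_t^+\|X^n\|^2 + (\theta-\tfrac12)k\|\delta_t^+ X^n\|^2$ with $X=\delta_x^- W$ for the Laplacian term and with $X=W_j$ ($j=0,N$) for the linear parts of $\nu g_0, \nu g_N$; for $\theta\in[\tfrac12,1]$ the $(\theta-\tfrac12)k$ remainders are non-negative and dropped. The main obstacle lies in the cubic boundary pieces $\tfrac{2}{9c_j}(W_j^{n+\theta})^3\delta_t^+W_j^n$. For $\theta=1$, convexity of $W\mapsto W^4$ directly yields the lower bound $\geq\tfrac{1}{18c_j}\delta_t^+(W_j^n)^4$. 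For $\theta\in[\tfrac12,1)$, I would expand
$$(\theta a+(1-\theta)b)^3(a-b) = \tfrac14(a^4-b^4) + 3(\theta-\tfrac12)b^2(a-b)^2 + (3\theta^2-1)b(a-b)^3 + (\theta^3-\tfrac14)(a-b)^4$$
with $a=W_j^{n+1},\,b=W_j^n$, and control the sign-indefinite residuals by Young's inequality, absorbing them into $\|\delta_t^+W^n\|^2$ via a bootstrap with the $L^\infty$ estimate to be closed at the end. The convective and trilinear terms $w_d\mathcal{C}^n$, $\mathcal{N}^n$ are estimated by Cauchy--Schwarz and Young to absorb an $\epsilon$-fraction of $\|\delta_t^+W^n\|^2$, leaving remainders controlled by $\|\delta_x^- W^{n+\theta}\|_h^2$, which is already summable with exponential weight by Lemma \ref{L4.3}.

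To close, I multiply through by $e^{2\alpha t_{n+1}}$, apply the telescoping identity (\ref{exp}), sum from $n=0$ to $M-1$, bound the right-hand side by $C\|W^0\|_1^2$ via Lemma \ref{L4.3}, and invoke discrete Gronwall. After choosing $\alpha$ small enough so that an analogue of (\ref{3.61}) holds and multiplying by $e^{-2\alpha t_M}$, the $H^1$ estimate in the statement follows. The $L^\infty$-bound is then immediate from the discrete Sobolev embedding $\|W^n\|_\infty^2 \leq 2(\|\delta_x^- W^n\|_h^2 + (W_j^n)^2)$, $j\in\{0,N\}$, stated in the Remark following Lemma \ref{L4.4}. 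The principal technical challenge is the cubic boundary term at intermediate $\theta$, where the naive $\delta_t^+$-extraction fails; the bootstrap with the $L^\infty$ bound must be arranged so that the quartic residuals enter as a genuinely small perturbation, which is why the a priori $L^2$-summability of the boundary quartics from Lemma \ref{L4.3} is essential.
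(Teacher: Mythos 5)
Your proposal follows essentially the same route as the paper's proof: testing with $\delta_t^+W^n$, summation by parts via Remark \ref{r3.2}, extracting $\delta_t^+$ of the energy via the $(\theta-\tfrac12)k$ identities, controlling the cubic boundary term so that a $\delta_t^+(W_j^n)^4$ contribution survives, then exponential weighting, Gronwall with Lemma \ref{L4.3}, and the discrete Sobolev embedding for the $L^\infty$ bound. The only difference is cosmetic: the paper expands $(W_j^{n+\theta})^3\delta_t^+W_j^n$ around the midpoint $\tfrac12(W_j^{n+1}+W_j^n)$ rather than around $W_j^n$ as you do, and both expansions leave sign-indefinite residuals that are absorbed in the same way.
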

	\begin{proof}
		Multiplying the first, second, and third equation of \eqref{th} by \( \frac{h}{2} \delta_{t}^{+}W_{0}^{n}\), \(h\delta_{t}^{+}W_{i}^{n}\), and \(\frac{h}{2} \delta_{t}^{+}W_{N}^{n}\) respectively, and summing from \(i=1 \ \text{to} \  N-1\) to the second equation of \eqref{th}, we get after using summation by parts with Remark \ref{r3.2}
		\begin{align}
			\label{3.18}
			\norm{\delta_{t}^{+}W^{n}}^{2}&+ \nu \Big(\delta_{t}^{+}(\delta_{x}^{-}W^{n}), \delta_{x}^{-}W^{n+\theta}\Big)_{h}+ \nu g_{0}(W_{0}^{n+\theta}) \delta_{t}^{+}W_{0}^{n}-\nu g_{N}(W_{N}^{n+\theta})\delta_{t}^{+}W_{N}^{n}
			\nonumber\\&=-w_{d}\Big(\frac{h}{2}\delta_{x}^{+}W_{0}^{n+\theta}\delta_{t}^{+}W_{0}^{n}+ h\sum_{i=1}^{N-1}\delta_{x}^{c}W_{i}^{n+\theta}\delta_{t}^{+}W_{i}^{n}+\frac{h}{2}\delta_{x}^{-}W_{N}^{n+\theta}\delta_{t}^{+}W_{N}^{n} \Big)
			\nonumber\\&\quad -\frac{h}{2}\phi(W^{n+\theta}, W^{n+\theta})_{0}\delta_{t}^{+}W_{0}^{n}-h\sum_{i=1}^{N-1}\phi(W^{n+\theta}, W^{n+\theta})_{i}\delta_{t}^{+}W_{i}^{n}-\frac{h}{2}\phi(W^{n+\theta}, W^{n+\theta})_{N}\delta_{t}^{+}W_{N}^{n}.
		\end{align}
		Note that, using \((a+b)^{3}=a^{3}+b^{3}+3a^{2}b+3ab^{2}\)
		\begin{align*}
			\delta_{t}^{+}W_{0}^{n} (W_{0}^{n+\theta})^{3}&=\delta_{t}^{+}W_{0}^{n}\Big(k(\theta-\frac{1}{2})\delta_{t}^{+}W_{0}^{n}+ \frac{1}{2}(W_{0}^{n+1}+W_{0}^{n}) \Big)^{3},
			\\[1mm]&=\delta_{t}^{+}W_{0}^{n}\bigg(k^{3}\Big(\theta-\frac{1}{2}\Big)^{3}\big(\delta_{t}^{+}W_{0}^{n}\big)^{3}+\frac{1}{8}(W_{0}^{n+1}+W_{0}^{n})^{3}+\frac{3k^{2}}{2}\big(\theta-\frac{1}{2}\big)^{2}(\delta_{t}^{+}W_{0}^{n})^{2}(W_{0}^{n+1}+W_{0}^{n})
			\\&\quad+\frac{3k}{4}\Big(\theta-\frac{1}{2}\Big)(\delta_{t}^{+}W_{0}^{n})\Big(W_{0}^{n+1}+W_{0}^{n}\Big)^{2} \bigg),
			\\[1mm]&=k^{3}\Big(\theta-\frac{1}{2}\Big)^{3}(\delta_{t}^{+}W_{0}^{n})^{4}+ \frac{3k^{2}}{2}\Big(\theta-\frac{1}{2}\Big)^{2}(\delta_{t}^{+}W_{0}^{n})^{3}\Big(W_{0}^{n+1}+W_{0}^{n}\Big)
			\\[1mm]& \quad +\frac{3k}{4}\Big(\theta-\frac{1}{2}\Big)(\delta_{t}^{+}W_{0}^{n})^{2}\Big(W_{0}^{n+1}+W_{0}^{n}\Big)^{2}+\frac{1}{8}\delta_{t}^{+}W_{0}^{n}\Big(W_{0}^{n+1}+W_{0}^{n}\Big)^{3},
		\end{align*}
		where \(W_{0}^{n+\theta}=(\theta-\frac{1}{2})(W_{0}^{n+1}-W_{0}^{n})+ \frac{1}{2}(W_{0}^{n+1}+W_{0}^{n})=k(\theta-\frac{1}{2})\delta_{t}^{+}W_{0}^{n}+ \frac{1}{2}(W_{0}^{n+1}+W_{0}^{n})\).
		
		Also 
		\begin{align*}
			\frac{1}{8}\delta_{t}^{+}W_{0}^{n}(W_{0}^{n+1}+W_{0}^{n})^{3}&=\frac{1}{8}\delta_{t}^{+}(W_{0}^{n})^{4}+\frac{3}{8k}\Big(W_{0}^{n}(W_{0}^{n+1})^{3}-W_{0}^{n+1}(W_{0}^{n})^{3}\Big),
			\\& = \frac{1}{8}\delta_{t}^{+}(W_{0}^{n})^{4}+\frac{3}{8}\delta_{t}^{+}(W_{0}^{n})^{2}W_{0}^{n+1}W_{0}^{n}.
		\end{align*}
		Using the  Young's inequality with Lemma \ref{L4.4}, we have
		\begin{align}
			\label{3.19}
			\nu \Big(\delta_{t}^{+}(\delta_{x}^{-}W^{n}), \delta_{x}^{-}W^{n+\theta}\Big)_{h}&+(c_{0}+w_{d})\delta_{t}^{+}W_{0}^{n} W_{0}^{n+\theta}+ (c_{1}+w_{d})\delta_{t}^{+}W_{N}^{n} W_{N}^{n+\theta}
			\nonumber\\&+ \frac{k^{3}}{18c_{0}}\big(\theta-\frac{1}{2}\big)^{3}(\delta_{t}^{+}W_{0}^{n})^{4}+ \frac{1}{36c_{0}}\delta_{t}^{+}(W_{0}^{n})^{4}
			\nonumber\\& +\frac{k^{3}}{18c_{1}}\big(\theta-\frac{1}{2}\big)^{3}(\delta_{t}^{+}W_{N}^{n})^{4}+ \frac{1}{36c_{1}}\delta_{t}^{+}(W_{N}^{n})^{4}
			+\frac{1}{2}\norm{\delta_{t}^{+}W^{n}}^{2}
			\nonumber\\&\leq w_{d}^{2}\norm{\delta_{x}^{-}W^{n+\theta}}_{h}^{2}+ \frac{19}{18}\norm{W^{n+\theta}}_{\infty}^{2}\norm{\delta_{x}^{-}W^{n+\theta}}_{h}^{2}+ \frac{C}{k}\norm{W^{n}}_{\infty}^{2}(W_{0}^{n+1})^{2}
			\nonumber\\&\quad + \frac{C}{k}\Big((W_{0}^{n+1})^{4}+ (W_{0}^{n})^{4}\Big)+\frac{C}{k}\norm{W^{n}}_{\infty}^{2}(W_{N}^{n+1})^{2}
			+ \frac{C}{k}\Big((W_{N}^{n+1})^{4}+ (W_{N}^{n})^{4}\Big).
		\end{align}
		Since \[\Big(\delta_{t}^{+}(\delta_{x}^{-}W^{n}), \delta_{x}^{-}W^{n+\theta}\Big)_{h}=\frac{1}{2}\delta_{t}^{+}\norm{\delta_{x}^{-}W^{n}}_{h}^{2}+ k(\theta-\frac{1}{2})\norm{\delta_{t}^{+}(\delta_{x}^{-}W^{n})}_{h}^{2},
		\]
		and
		\[\delta_{t}^{+}W_{0}^{n} W_{0}^{n+\theta}=\frac{1}{2}\delta_{t}^{+}(W_{0}^{n})^{2}+ k(\theta-\frac{1}{2})(\delta_{t}^{+}W_{0}^{n}),
		\]
		then, for the case \(\theta\geq \frac{1}{2}\), we obtain from \eqref{3.19}
		\begin{align*}
			\nu\delta_{t}^{+}\norm{\delta_{x}^{-}W^{n}}_{h}^{2}&+(c_{0}+w_{d})\delta_{t}^{+}(W_{0}^{n})^{2}+(c_{1}+w_{d})\delta_{t}^{+}(W_{N}^{n})^{2}\\&+\frac{1}{18c_{0}}\delta_{t}^{+}(W_{0}^{n})^{4}+\frac{1}{18c_{1}}\delta_{t}^{+}(W_{N}^{n})^{4}+\norm{\delta_{t}^{+}W^{n}}^{2}
			\\&\leq 2w_{d}^{2}\norm{\delta_{x}^{-}W^{n+\theta}}_{h}^{2}+ \frac{19}{9}\norm{W^{n+\theta}}_{\infty}^{2}\norm{\delta_{x}^{-}W^{n+\theta}}_{h}^{2}+ \frac{C}{k}\norm{W^{n}}_{\infty}^{2}(W_{0}^{n+1})^{2}
			\nonumber\\&\quad + \frac{C}{k}\Big((W_{0}^{n+1})^{4}+ (W_{0}^{n})^{4}\Big)+\frac{C}{k}\norm{W^{n}}_{\infty}^{2}(W_{N}^{n+1})^{2}
			+ \frac{C}{k}\Big((W_{N}^{n+1})^{4}+ (W_{N}^{n})^{4}\Big).
		\end{align*}
	Multiplying by $ e^{2\alpha t_{n+1}} $ to the above equation and using \eqref{exp}, we get after multiplying by $ e^{-2\alpha k} $ in the resulting inequality
	\begin{align*}
	\Big(\nu\delta_{t}^{+}\norm{\delta_{x}^{-}\hat{W}^{n}}_{h}^{2} &+(c_{0}+w_{d})\delta_{t}^{+}(\hat{W}_{0}^{n})^{2}+(c_{1}+w_{d})\delta_{t}^{+}(\hat{W}_{N}^{n})^{2}\Big)
	\\&+e^{2\alpha t_{n+1}}\Big(\frac{1}{18c_{0}}\delta_{t}^{+}({W}_{0}^{n})^{4}+\frac{1}{18c_{1}}\delta_{t}^{+}({W}_{N}^{n})^{4} \Big) +e^{2\alpha t_{n}}\norm{\delta_{t}^{+}W^{n}}^{2}\\&\leq C(\alpha)\Big( \norm{\delta_{x}^{-}\hat{W}^{n+1}}_{h}^{2}+(\hat{W}_{0}^{n+1})^{2}+ (\hat{W}_{N}^{n+1})^{2}+e^{2\alpha t_{n}}\big(({W}_{0}^{n+1})^{4}+({W}_{N}^{n+1})^{4}\big)  \Big)
	\\&\quad +Ce^{2\alpha t_{n}} \norm{\delta_{x}^{-}W^{n+\theta}}_{h}^{2}+ \frac{19e^{2\alpha t_{n}}}{9}\norm{W^{n+\theta}}_{\infty}^{2}\norm{\delta_{x}^{-}W^{n+\theta}}_{h}^{2}
	\nonumber\\&\quad+ \frac{Ce^{2\alpha t_{n}}}{k}\norm{W^{n}}_{\infty}^{2}(W_{0}^{n+1})^{2}
	 + \frac{Ce^{2\alpha t_{n}}}{k}\Big((W_{0}^{n+1})^{4}+ (W_{0}^{n})^{4}\Big)
	 \nonumber\\&\quad+\frac{Ce^{2\alpha t_{n}}}{k}\norm{W^{n}}_{\infty}^{2}(W_{N}^{n+1})^{2}
	+ \frac{Ce^{2\alpha t_{n}}}{k}\Big((W_{N}^{n+1})^{4}+ (W_{N}^{n})^{4}\Big).
	\end{align*}	
	Multiplying by $ k $ and summing over \(n=0\) to \(n=M-1\) to the above inequality. Then, using discrete Gronwall's inequality with Lemma \ref{L4.3}, we have after multiplying by $ e^{-2\alpha t_{M}} $ in the resulting inequality
		\begin{align*}
			\nu \norm{\delta_{x}^{-}W^{M}}_{h}^{2}&+ (c_{0}+w_{d})(W_{0}^{M})^{2}+ (c_{1}+w_{d})(W_{N}^{M})^{2}+\frac{1}{18c_{0}}(W_{0}^{M})^{4}+\frac{1}{18c_{1}}(W_{N}^{M})^{4}
			\\&+ Cke^{-2\alpha t_{M}}\sum_{n=0}^{M-1}e^{2\alpha t_{n}} \norm{\delta_{t}^{+}W^{n}}^{2}
			\leq C\norm{W^{0}}_{1}^{2}e^{-2\alpha t_{M}},
		\end{align*}
where we choose \(k\) sufficiently small such that \((\nu-Ck)>0\).
		
		This completes the first part of the proof.
		
		Now
		\[
		\norm{W^{n}}_{\infty}^{2}\leq 2 \Big(\norm{W^{n+\theta}}^{2}+ W_{j}^{n+\theta}\Big), \quad \forall \ n\geq 0, \ \text{and} \ j=0, N.
		\]
		Using the first part, we arrive at
		\begin{align*}
			\norm{W^{n}}_{\infty}^{2}\leq C\norm{W^{0}}_{1}^{2}e^{-2\alpha t_{M}}.
		\end{align*}
		The proof is completed.
	\end{proof}

\begin{remark}
	We deliberately omitted the \( H^1 \)-stability analysis for \( \theta < \frac{1}{2} \) because the resulting time-step restriction is not practically useful. It behaves like an explicit scheme, requiring extremely small time steps for fine spatial meshes. Moreover, the stability condition in Lemma \ref{L4.5} involves nonlinear, solution-dependent terms on the right-hand side, which cannot be controlled a priori. This makes the analysis both technically cumbersome and not particularly helpful for designing robust algorithms \cite{MR4242164}. Hence we focus only on $\theta \in [\frac{1}{2}, 1]$.
		
\end{remark}
	\section{Error analysis.}
	This section contains the error analysis of the finite difference scheme for  $\theta \in [\frac{1}{2}, 1]$. Assume that the solution $ w \in C^{4,3}([0, 1], [0, T])$  of (\ref{w11}) - (\ref{w12}) . Using Taylor series expansion, we obtain, for all $ 0 \leq n \leq M-1 $
	\begin{align}
		\label{eq5.1}
		\begin{cases}
			\delta_{t}^{+}w_{0}^{n}-\frac{2\nu}{h^{2}}\left[w_{1}^{n+\theta}-w_{0}^{n+\theta}-hg_{0}(w_{0}^{n+\theta})\right]+w_{d}\delta_{x}^{+}w_{0}^{n+\theta}+\phi(w^{n+\theta},w^{n+\theta})_{0}=T_{0}^{n+\theta} ,  &  i=0 ,  \vspace{0.5cm} \\ 
			\vspace{0.5cm}
			\delta_{t}^{+} w_{i}^{n}-\nu \delta_{x}^{2} w_{i}^{n+\theta}+ w_{d}\delta_{x}^{c} w_{i}^{n+\theta}+ \phi(w^{n+\theta},w^{n+\theta})_{i} =T_{i}^{n+\theta},\medspace \ \  i=1,2,\ldots, N-1  , \\
			\delta_{t}^{+}w_{N}^{n}-\frac{2\nu}{h^{2}}[w_{N-1}^{n+\theta}-w_{N}^{n+\theta}+hg_{N}(w_{N}^{n+\theta})]+w_{d}\delta_{x}^{-}w_{N}^{n+\theta}+\phi(w^{n+\theta},w^{n+\theta})_{N}=T_{N}^{n+\theta}, & i=N,    \vspace{0.5cm} \\
			w_{i}^{0}=w_0(x_{i}), \medspace  \ i=0, 1,\ldots, N,\\
		\end{cases}		
	\end{align} 
	where \begin{align}
		\label{eq5.2}
		T_{i}^{n+\theta}=
		\begin{cases}
			O(h+k^{2}+(\theta-\frac{1}{2})k), &  i=0 ,  \vspace{0.5cm} \\ 
			\vspace{0.5cm}	
			O(h^{2}+k^{2}+(\theta-\frac{1}{2})k), &  i=1,2,\ldots,N-1 , \\
			O(h+k^{2}+(\theta-\frac{1}{2})k), &  i=N, \\
		\end{cases}	
	\end{align}
	and  $ g_{0}(w_{0}^{n})=v_{0}(w(x_{0}, t^{n}))= v_{0}(t^{n})$ and $ g_{N}(w_{N}^{n})=v_{1}(w(x_{N}, t^{n}))= v_{1}(t^{n}).$
	
	We define the error at the $ n^{th} $ time level as
	\begin{align*}
		e_{i}^{n}:=w_{i}^{n}-W_{i}^{n}, \medspace \ \text{for} \ i=0,1,\ldots,N. 
	\end{align*}
	
	Subtracting  (\ref{th})  from (\ref{eq5.1}) for all $ 0 \leq n \leq M-1 $,  we get
	\begin{align}
		\label{eq5.3}
		\begin{cases}
			\delta_{t}^{+}e_{0}^{n}-\frac{2\nu}{h^{2}}\left[e_{1}^{n+\theta}-e_{0}^{n+\theta}-h\left(g_{0}(w_{0}^{n+\theta})-g_{0}(W_{0}^{n+\theta})\right)\right]+w_{d}\delta_{x}^{+}e_{0}^{n+\theta}\\+\left(\phi(w^{n+\theta},w^{n+\theta})_{0}-\phi(W^{n+\theta},W^{n+\theta})_{0}\right)=T_{0}^{n+\theta} ,  \medspace \  i=0   , \vspace{0.5cm} \\ 
			\vspace{0.5cm}
			\delta_{t}^{+} e_{i}^{n}-\nu \delta_{x}^{2} e_{i}^{n+\theta}+ w_{d}\delta_{x}^{c} e_{i}^{n+\theta}+ \left(\phi(w^{n+\theta},w^{n+\theta})_{i}-\phi(W^{n+\theta},W^{n+\theta})_{i}\right) =T_{i}^{n+\theta}, \medspace \  1\leq i\leq N-1  , \\
			\delta_{t}^{+}e_{N}^{n}-\frac{2\nu}{h^{2}}\left[e_{N-1}^{n+\theta}-e_{N}^{n+\theta}+h\left(g_{N}(w_{N}^{n+\theta})-g_{N}(W_{N}^{n+\theta})\right)\right]+w_{d}\delta_{x}^{-}e_{N}^{n+\theta}\\ +\left(\phi(w^{n+\theta},w^{n+\theta})_{N}-\phi(W^{n+\theta},W^{n+\theta})_{N}\right)=T_{N}^{n+\theta} , \medspace \  i=N  , \vspace{0.5cm} \\
			e_{i}^{0}=0, \quad  \ i=0,1,\ldots,N.\\
		\end{cases}		
	\end{align}

	\begin{theorem}
		\label{5.1}
		Let $ w \in C^{4,3}([0, 1], [0, T])$ be solution of (\ref{w11}) - (\ref{w12}) and $ W_{i}^{n} $ be the solution of (\ref{th}). Suppose that $ 0\leq \alpha \leq \frac{\theta^{2}}{2}\min\{\nu, \frac{c_{0}+w_{d}-2C}{2}, \frac{c_{1}+3w_{d}-2C}{2} \} $, where $ \theta\in \left[\frac{1}{2}, 1 \right] $. Let $ k_{0}>0 $ such that for $ 0<k\leq k_{0} $
		\begin{align}
			\label{4.41}
		 e^{2\alpha k}\leq 1+k \theta^{2}\min\{\nu, \frac{c_{0}+w_{d}-2C}{2}, \frac{c_{1}+3w_{d}-2C}{2}\}.
		\end{align}	
		 For $ \theta\in \left[\frac{1}{2}, 1 \right] $,  there exists  a positive constant  $C$ independent of $ h $ and $ k $ such that for all $ 0\leq n\leq M-1 $
	\begin{align*}
		\norm{{e}^{M}}^{2}+ e^{-2\alpha t_{M}}k\beta^{*1} \sum_{n=0}^{M-1}\Big(\norm{\delta_{x}^{-}\hat{e}^{n+1}}_{h}^{2}&+(\hat{e}_{0}^{n+1})^{2}+(\hat{e}_{N}^{n+1})^{2}  \Big)
		+ e^{-2\alpha t_{M}}k\theta^{4}\sum_{n=0}^{M-1} e^{2\alpha t_{n}}\Big(\frac{1}{36c_{0}}(e_{0}^{n+1})^{4}
		\\&+\frac{1}{36c_{1}}(e_{N}^{n+1})^{4}\Big)
		\leq Ce^{-2\alpha t_{M}} \norm{W^{0}}_{1}^{2}\max_{0\leq n\leq M-1}\norm{T^{n+\theta}}^{2},
	\end{align*}
		where $ T_{i}^{n+\theta}$ is given in (\ref{eq5.2}) and $\beta^{*1}  $ is given in \eqref{4.12} below.
	\end{theorem}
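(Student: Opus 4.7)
The plan is to mirror the energy-based stability argument of Lemma \ref{L4.3}, applied now to the error equation \eqref{eq5.3}, treating the truncation error $T^{n+\theta}$ as a forcing term and using the uniform $L^\infty$ bounds on $W^n$ from Lemma \ref{L3.6} to control the nonlinear contributions. The starting point is to take the discrete $L^2$ inner product of the three lines of \eqref{eq5.3} against $\tfrac{h}{2}e_0^{n+\theta}$, $h\,e_i^{n+\theta}$, and $\tfrac{h}{2}e_N^{n+\theta}$ respectively, and to combine them. By Lemmas \ref{L4.1} and \ref{L4.2} the linear advection and interior convective pieces reproduce the same boundary algebra as in \eqref{3.5}; the genuinely new object is the difference $g_0(w_0^{n+\theta})-g_0(W_0^{n+\theta})$, which I would factor through the identity $a^3-b^3=(a-b)(a^2+ab+b^2)$ with $a=w_0^{n+\theta}$, $b=W_0^{n+\theta}$. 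After multiplication by $e_0^{n+\theta}$ this produces the coercive terms $(c_0+w_d)(e_0^{n+\theta})^2+\tfrac{2}{9c_0}(e_0^{n+\theta})^4$ together with mixed cross-terms involving $W_0^{n+\theta}(e_0^{n+\theta})^3$ and $(W_0^{n+\theta})^2(e_0^{n+\theta})^2$, and analogously at $x=1$.

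The cubic cross-term $W_0^{n+\theta}(e_0^{n+\theta})^3$ is split by Young's inequality into a small multiple of $(e_0^{n+\theta})^4$, absorbed into the coercive quartic, plus a term $C(W_0^{n+\theta})^2(e_0^{n+\theta})^2$ that is bounded via the uniform estimate $\|W^n\|_\infty\le C\|W^0\|_1$ of Lemma \ref{L3.6}. The discrete convective error is decomposed as
\[
\phi(w^{n+\theta},w^{n+\theta})_i-\phi(W^{n+\theta},W^{n+\theta})_i
=\tfrac{1}{3}\bigl((w_{i-1}+w_i+w_{i+1})\delta_x^c e_i^{n+\theta}+(e_{i-1}+e_i+e_{i+1})\delta_x^c W_i^{n+\theta}\bigr)
\]
for interior $i$, with the analogous one-sided expressions at $i=0,N$; inner-producting with $e^{n+\theta}$, invoking the smoothness of $w$ together with the $L^\infty$ bound on $W$, and applying summation by parts on the $\delta_x^c W^{n+\theta}_i$ factor reduces this to a bound of the form $C\bigl(\|\delta_x^-e^{n+\theta}\|_h^2+(e_0^{n+\theta})^2+(e_N^{n+\theta})^2\bigr)$. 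The hypothesis built into the assumption on $\alpha$, namely $c_0+w_d>2C$ and $c_1+3w_d>2C$, is precisely what allows this to be absorbed into the coercive terms. The truncation contribution is handled trivially by Cauchy--Schwarz and Young, $|(T^{n+\theta},e^{n+\theta})|\le\tfrac{1}{2}\|T^{n+\theta}\|^2+\tfrac{1}{2}\|e^{n+\theta}\|^2$.

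Using the splitting $(\delta_t^+ e^n,e^{n+\theta})=\tfrac{1}{2}\delta_t^+\|e^n\|^2+k(\theta-\tfrac{1}{2})\|\delta_t^+ e^n\|^2$ and dropping the nonnegative piece (since $\theta\ge\tfrac{1}{2}$), I would then multiply through by $e^{2\alpha t_{n+1}}$ and use the identity \eqref{exp} to rewrite $e^{\alpha t_{n+1}}\delta_t^+ e^n$ in terms of $\delta_t^+\hat e^n$ and $\hat e^{n+1}$. After multiplying by $e^{-2\alpha k}$, summing from $n=0$ to $M-1$, and multiplying by $k$, the smallness assumption \eqref{4.41} guarantees that the coefficients of $\|\delta_x^-\hat e^{n+1}\|_h^2$, $(\hat e_0^{n+1})^2$ and $(\hat e_N^{n+1})^2$ remain bounded below by a positive constant $\beta^{*1}$ of the form
\[
\beta^{*1}=\min\Bigl\{\tfrac{\theta^2\nu}{2}e^{-2\alpha k}-\tfrac{1-e^{-2\alpha k}}{k},\,\tfrac{\theta^2(c_0+w_d-2C)}{2}e^{-2\alpha k}-\tfrac{1-e^{-2\alpha k}}{k},\,\tfrac{\theta^2(c_1+3w_d-2C)}{2}e^{-2\alpha k}-\tfrac{1-e^{-2\alpha k}}{k}\Bigr\}.
\]
A discrete Gronwall inequality absorbs the residual $\|\hat e^{n+1}\|^2$ term produced by the exponential shift, and multiplying by $e^{-2\alpha t_M}$ delivers the claimed estimate; the factor $\|W^0\|_1^2$ on the right arises from invoking Lemma \ref{L3.6} when bounding $\|W^n\|_\infty^2$.

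The principal obstacle, exactly as in the stability proof, is the simultaneous control of the cubic boundary nonlinearity and the discrete convective term: both generate contributions proportional to $\|W^n\|_\infty^2$ times boundary or gradient energies of $e$, and closing the estimate depends critically on Lemma \ref{L3.6} providing an $n$-uniform, exponentially decaying, bound on $\|W^n\|_\infty$, so that the constant $C$ can be fixed a priori and the coefficient $\beta^{*1}$ kept strictly positive under the stated hypothesis on $\alpha$.
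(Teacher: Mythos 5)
Your proposal follows essentially the same route as the paper's proof: the same energy pairing of the error equation \eqref{eq5.3} with $\tfrac{h}{2}e_0^{n+\theta}$, $he_i^{n+\theta}$, $\tfrac{h}{2}e_N^{n+\theta}$, the same splitting $w^{n+\theta}=e^{n+\theta}+W^{n+\theta}$ to expose the coercive boundary terms and the cross-terms controlled by Lemma \ref{L3.6}, the same decomposition of the convective difference into $\phi(e,e)+\phi(e,W)+\phi(W,e)$ handled via Lemmas \ref{L4.1}--\ref{L4.2}, and the same exponential weighting, discrete Poincar\'e absorption into $\beta^{*1}$, and discrete Gronwall closure. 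The argument is correct as outlined (up to the inessential factor of $\tfrac{1}{2}$ in your first entry of $\beta^{*1}$ versus \eqref{4.12}).
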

	\begin{proof}
		Multiplying the first, second, and third equations of (\ref{eq5.3}) by $ \frac{h}{2} e_{0}^{n+\theta} ,$ $ h e_{i}^{n+\theta} $, and $ \frac{h}{2}e_{N}^{n+\theta} $  respectively. Summing from $ i=1$ to $ N-1 $ in the second equation of  (\ref{eq5.3}),    using summation by parts, we obtain
		\begin{align}
			\label{4.6}
			\nonumber	\left( \delta_{t}^{+}e^{n}, e^{n+\theta}\right) &+\nu \norm{\delta_{x}^{-}e^{n+\theta}}_{h}^{2}+\nu g_{0}(e_{0}^{n+\theta})e_{0}^{n+\theta}- \nu g_{N}(e_{N}^{n+\theta})e_{N}^{n+\theta}\nonumber\\& + w_{d}\left(\frac{h}{2}\delta_{x}^{+}e_{0}^{n+\theta}e_{0}^{n+\theta} +h \sum_{i=1}^{N-1}\delta_{x}^{c}e_{i}^{n+\theta}e_{i}^{n+\theta}+ \frac{h}{2}\delta_{x}^{-}e_{N}^{n+\theta}e_{N}^{n+\theta}\right)\nonumber\\&=-\frac{2}{3c_{0}\nu}(e_{0}^{n+\theta})^{2}W_{0}^{n+\theta}w_{0}^{n+\theta}+ \frac{2}{3c_{1}\nu}(e_{N}^{n+\theta})^{2}W_{N}^{n+\theta}w_{N}^{n+\theta}+(T^{n+\theta}, e^{n+\theta})\nonumber \\& -\left(\phi(w^{n+\theta},w^{n+\theta})-\phi(W^{n+\theta},W^{n+\theta}),e^{n+\theta}\right).
		\end{align}
		Setting  $ w_{i}^{n+\theta}=e_{i}^{n+\theta}+W_{i}^{n+\theta},$
		to the right hand side of the above equation, we can write
		\begin{align}
			\label{N}
			\nonumber	\left( \delta_{t}^{+}e^{n}, e^{n+\theta}\right) &+\nu \norm{\delta_{x}^{-}e^{n+\theta}}_{h}^{2}+\nu g_{0}(e_{0}^{n+\theta})e_{0}^{n+\theta}- \nu g_{N}(e_{N}^{n+\theta})e_{N}^{n+\theta}\nonumber\\& + w_{d}\left(\frac{h}{2}\delta_{x}^{+}e_{0}^{n+\theta}e_{0}^{n+\theta} +h \sum_{i=1}^{N-1}\delta_{x}^{c}e_{i}^{n+\theta}e_{i}^{n+\theta}+ \frac{h}{2}\delta_{x}^{-}e_{N}^{n+\theta}e_{N}^{n+\theta}\right)\nonumber\\&=-\frac{2}{3c_{0}\nu}(e_{0}^{n+\theta})^{2}W_{0}^{n+\theta}(e_{0}^{n+\theta}+W_{0}^{n+\theta})+ \frac{2}{3c_{1}\nu}(e_{N}^{n+\theta})^{2}W_{N}^{n+\theta}(e_{N}^{n+\theta}+W_{N}^{n+\theta})\nonumber\\&+(T^{n+\theta}, e^{n+\theta})\nonumber  -\left(\phi(e^{n+\theta},e^{n+\theta}),e^{n+\theta}\right)-\left(\phi(e^{n+\theta},W^{n+\theta}),e^{n+\theta}\right)\nonumber\\&-\left(\phi(W^{n+\theta},e^{n+\theta}),e^{n+\theta}\right).
		\end{align}
		
		Using the Cauchy-Schwarz inequality and Young's inequality with Lemmas \ref{L4.1} and \ref{L4.2} to (\ref{N}), we obtain
		\begin{align}
			\label{4.7}
			\nonumber	\left( \delta_{t}^{+}e^{n},e^{n+\theta}\right) &+\nu \norm{\delta_{x}^{-}e^{n+\theta}}_{h}^{2}+\left((c_{0}+w_{d})(e_{0}^{n+\theta})^{2}+\frac{2}{9c_{0}}(e_{0}^{n+\theta})^{4}+(c_{1}+w_{d})(e_{N}^{n+\theta})^{2}+\frac{2}{9c_{1}}(e_{N}^{n+\theta})^{4}\right)\\&+\frac{w_{d}}{2}\left((e_{N}^{n+\theta})^{2}-(e_{0}^{n+\theta})^{2}\right)
			\nonumber\\&\leq C\norm{W^{n+\theta}}_{1}^{2}\norm{e^{n+\theta}}^{2}+\frac{1}{3}\left((e_{0}^{n+\theta})^{3}-(e_{N}^{n+\theta})^{3}\right)+\norm{T^{n+\theta}}\norm{e^{n+\theta}}\nonumber\\&\quad+\frac{1}{18c_{0}}(e_{0}^{n+\theta})^{4}+\frac{1}{18c_{1}}(e_{N}^{n+\theta})^{4}+C(e_{0}^{n+\theta})^{2}+C(e_{N}^{n+\theta})^{2}.
		\end{align}
		A use of the Young's inequality, the second term on the right-hand side of $(\ref{4.7})$ to give
		\begin{align}
			\label{4.8}
			\frac{1}{3}(e_{0}^{n+\theta})^{3}\leq \frac{c_{0}}{2}(e_{0}^{n+\theta})^{2}+\frac{1}{18c_{0}}(e_{0}^{n+\theta})^{4},\\
			\label{4.9}
			\frac{1}{3}(e_{N}^{n+\theta})^{3}\leq \frac{c_{1}}{2}(e_{N}^{n+\theta})^{2}+\frac{1}{18c_{1}}(e_{N}^{n+\theta})^{4}.
		\end{align}
		Note that
		\begin{align}
			\label{4.10}
			\left( \delta_{t}^{+}e^{n},     e^{n+\theta}\right)=\frac{1}{2}\delta_{t}^{+}\norm{e^{n}}^{2}+k(\theta-\frac{1}{2})\norm{\delta_{t}^{+}e^{n}}^{2}.
		\end{align}
		Substituting $(\ref{4.8})-(\ref{4.10})$ into  $(\ref{4.7})$ and using  the Young's inequality, we arrive at
		\begin{align}
			\label{4.11}	
			\nonumber \frac{1}{2}\delta_{t}^{+}\norm{e^{n}}^{2}&+k(\theta-\frac{1}{2})\norm{\delta_{t}^{+}e^{n}}^{2}+\nu\norm{\delta_{x}^{-}e^{n+\theta}}_{h}^{2} \\& \nonumber  +\left((\frac{c_{0}}{2}+\frac{w_{d}}{2}-C)(e_{0}^{n+\theta})^{2}+\frac{1}{9c_{0}}(e_{0}^{n+\theta})^{4}+(\frac{c_{1}}{2}+\frac{3w_{d}}{2}-C)(e_{N}^{n+\theta})^{2}+\frac{1}{9c_{1}}(e_{N}^{n+\theta})^{4}\right)\\&\leq C\norm{W^{n+\theta}}_{1}^{2}\norm{e^{n+\theta}}^{2}+ \frac{1}{2}\norm{T^{n+\theta}}^{2}.
		\end{align}
		For the case $ \theta\geq \frac{1}{2} ,$ we deduce from \eqref{4.11}
		\begin{align*}	
			\nonumber \frac{1}{2}\delta_{t}^{+}\norm{e^{n}}^{2}&+\nu\norm{\delta_{x}^{-}e^{n+\theta}}_{h}^{2} +\Big((\frac{c_{0}}{2}+\frac{w_{d}}{2}-C)(e_{0}^{n+\theta})^{2}+\frac{1}{9c_{0}}(e_{0}^{n+\theta})^{4}
			 \\& \nonumber +(\frac{c_{1}}{2}+\frac{3w_{d}}{2}-C)(e_{N}^{n+\theta})^{2}+\frac{1}{9c_{1}}(e_{N}^{n+\theta})^{4}\Big)\leq C\norm{W^{n+\theta}}_{1}^{2}\norm{e^{n+\theta}}^{2}+ \frac{1}{2}\norm{T^{n+\theta}}^{2}.
		\end{align*}
	Multiplying by $ 2e^{2\alpha t_{n+1}} $ and using \eqref{exp}, we arrive at
	\begin{align*}
		e^{2\alpha k}\delta_{t}^{+}\norm{\hat{e}^{n}}^{2}&-\frac{(e^{2\alpha k}-1)}{k}\norm{\hat{e}^{n+1}}^{2} +2\nu e^{2\alpha t_{n+1}}\norm{\delta_{x}^{-}e^{n+\theta}}_{h}^{2}
		  +e^{2\alpha t_{n+1}}\Big((c_{0}+w_{d}-2C)(e_{0}^{n+\theta})^{2}
		  \\& +\frac{1}{18c_{0}}(e_{0}^{n+\theta})^{4}+(c_{1}+3w_{d}-2C)(e_{N}^{n+\theta})^{2}+\frac{1}{18c_{1}}(e_{N}^{n+\theta})^{4}\Big)
		   \\& \leq Ce^{2\alpha t_{n+1}}\norm{W^{n+\theta}}_{1}^{2}\norm{e^{n+\theta}}^{2}+ e^{2\alpha t_{n+1}}\norm{T^{n+\theta}}^{2},
	\end{align*}
where $ \hat{e}^{n}=e^{\alpha t_{n}} e^{n}. $

Following the proof of Lemma \ref{L4.3} with Young's inequality and multiplying by $ e^{-2\alpha k} $ in the resulting inequality yields
\begin{align*}
	\delta_{t}^{+}\norm{\hat{e}^{n}}^{2}&-\frac{(1-e^{-2\alpha k})}{k}\norm{\hat{e}^{n+1}}^{2}+e^{-2\alpha k}\theta^{2}\nu \norm{\delta_{x}^{-}\hat{e}^{n+1}}_{h}^{2}+ e^{-2\alpha k}\theta^{2}\Big(\frac{(c_{0}+w_{d}-2C)}{2}(\hat{e}_{0}^{n+1})^{2}
	\\&+ \frac{(c_{1}+3w_{d}-2C)}{2}(\hat{e}_{N}^{n+1})^{2} \Big)+ \theta^{4}e^{2\alpha t_{n}}\Big(\frac{1}{36c_{0}}(e_{0}^{n+1})^{4}+\frac{1}{36c_{1}}(e_{N}^{n+1})^{4}\Big)
	\\&\leq (1-\theta)^{2}e^{2\alpha t_{n}}\Big( 2\nu\norm{\delta_{x}^{-}{e}^{n}}_{h}^{2}+(c_{0}+w_{d}-2C)({e}_{0}^{n})^{2}+(c_{1}+3w_{d}-2C)({e}_{N}^{n})^{2} 
	\\&\quad+ C(1-\theta)^{2}\big((e_{0}^{n})^{4}+(e_{N}^{n})^{4}\big) \Big)+  Ce^{2\alpha t_{n}}\norm{W^{n+\theta}}_{1}^{2}\norm{e^{n+\theta}}^{2}+ e^{2\alpha t_{n}}\norm{T^{n+\theta}}^{2}.
\end{align*}
	
	Using discrete Poincar\'e inequality and summing  from \(n=0\) to \(M-1\) gives	
		\begin{align*}
			\norm{\hat{e}^{M}}^{2}+ k\beta^{*1} \sum_{n=0}^{M-1}\Big(\norm{\delta_{x}^{-}\hat{e}^{n+1}}_{h}^{2}+(\hat{e}_{0}^{n+1})^{2}&+(\hat{e}_{N}^{n+1})^{2}  \Big)
			+ k\theta^{4}\sum_{n=0}^{M-1} e^{2\alpha t_{n}}\Big(\frac{1}{36c_{0}}(e_{0}^{n+1})^{4}+\frac{1}{36c_{1}}(e_{N}^{n+1})^{4}\Big)
	\\&	\leq Ck\theta^{2}\sum_{n=0}^{M}e^{2\alpha t_{n+1}}\norm{W^{n+1}}_{1}^{2}\norm{e^{n}}^{2}+k\sum_{n=0}^{M-1}e^{2\alpha t_{n}}\norm{T^{n+\theta}}^{2},
		\end{align*}
	where \begin{align}
		\label{4.12}
		0<\beta^{*1}=\min\{ \Big(e^{-2\alpha k}\theta^{2}\nu&-\frac{(1-e^{-2\alpha k})}{k}\Big), \Big(e^{-2\alpha k}\frac{\theta^{2}(c_{0}+w_{d}-2C)}{2}-\frac{(1-e^{-2\alpha k})}{k}\Big), 
		\nonumber\\&\quad  \Big(e^{-2\alpha k}\frac{\theta^{2}(c_{1}+3w_{d}-2C)}{2}-\frac{(1-e^{-2\alpha k})}{k}\Big)  \},
	\end{align}
and we choose \(c_{0}, c_{1}\), and \(w_{d}\) such that \({c_{0}}+{w_{d}}-C\) and \({c_{1}}+3{w_{d}}-C\) are nonzero. 	Select $ k_{0} $ such that \eqref{4.41} is satisfied for $ 0<k\leq k_{0}$.
	
	Suppose that \(k\) is sufficiently small such that \( (1- Ck\theta^{2})>0.\) Then, using the discrete Gronwall's inequality with Lemmas \ref{L4.3} and \ref{L3.6}, it follows that
		\begin{align*}
			\norm{{e}^{M}}^{2}+ e^{-2\alpha t_{M}}k\beta^{*1} \sum_{n=0}^{M-1}\Big(\norm{\delta_{x}^{-}\hat{e}^{n+1}}_{h}^{2}&+(\hat{e}_{0}^{n+1})^{2}+(\hat{e}_{N}^{n+1})^{2}  \Big)
			+ e^{-2\alpha t_{M}}k\theta^{4}\sum_{n=0}^{M-1} e^{2\alpha t_{n}}\Big(\frac{1}{36c_{0}}(e_{0}^{n+1})^{4}
			\\&+\frac{1}{36c_{1}}(e_{N}^{n+1})^{4}\Big)
			\leq Ce^{-2\alpha t_{M}} \norm{W^{0}}_{1}^{2}\max_{0\leq n\leq M-1}\norm{T^{n+\theta}}^{2},
		\end{align*}	
	where $ e^{-2\alpha t_{M}}k\sum_{n=0}^{M}e^{2\alpha t_{n}}\leq C. $	
		The proof is completed.
	\end{proof}

	The following lemma presents the error estimate of the state variable in the \(H^{1}\) and \(L^{\infty}\)-norms.
	\begin{lemma}
		Suppose that \(\theta\in [\frac{1}{2}, 1]\). Then, the following holds
		\begin{align*}
			\nu \norm{\delta_{x}^{-}e^{M}}_{h}^{2}&+ (c_{0}+w_{d})(e_{0}^{M})^{2}+(c_{1}+w_{d})(e_{N}^{M})^{2}
			+\frac{1}{18c_{0}}(e_{0}^{M})^{4}+\frac{1}{18c_{1}}(e_{N}^{M})^{4}
			\\&+ke^{-2\alpha t_{M}}\sum_{n=0}^{M-1}e^{2\alpha t_{n}}\norm{\delta_{t}^{+}e^{n}}^{2}
			\leq C(\norm{W^{0}}_{1} )e^{-2\alpha t_{M}}\max_{0\leq n\leq M-1}\norm{T^{n+\theta}}^{2},
		\end{align*}
		and 
		\begin{align*}
			\norm{e^{n}}_{\infty}\leq C(\norm{W^{0}}_{1} )e^{-2\alpha t_{M}}(h+(\theta-\frac{1}{2})k+ k^{2}), \quad 1\leq n\leq M.
		\end{align*}
	\end{lemma}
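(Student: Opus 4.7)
The plan is to mirror the strategy of Lemma \ref{L3.6} applied to the error system \eqref{eq5.3} rather than to the scheme itself, with the truncation error $T^{n+\theta}$ entering as a forcing term on the right-hand side. First, I would multiply the three equations of \eqref{eq5.3} by $\tfrac{h}{2}\delta_{t}^{+}e_{0}^{n}$, $h\,\delta_{t}^{+}e_{i}^{n}$, and $\tfrac{h}{2}\delta_{t}^{+}e_{N}^{n}$ respectively, sum the interior equation over $i=1,\ldots,N-1$, and invoke the summation-by-parts identity of Remark \ref{r3.2} to convert $-\nu(\delta_{x}^{2}e^{n+\theta},\delta_{t}^{+}e^{n})$ into $\nu(\delta_{t}^{+}(\delta_{x}^{-}e^{n}),\delta_{x}^{-}e^{n+\theta})_h$ plus boundary contributions that pair with the discrete Neumann data.

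Next, I would treat the boundary feedback terms $g_0(w_0^{n+\theta})-g_0(W_0^{n+\theta})$ and $g_N(w_N^{n+\theta})-g_N(W_N^{n+\theta})$ using the algebraic expansion
\[
g_0(w_0)-g_0(W_0)=\tfrac{1}{\nu}(c_0+w_d)e_0+\tfrac{2}{9c_0\nu}(e_0)(w_0^2+w_0W_0+W_0^2),
\]
so that after pairing with $\delta_{t}^{+}e_{0}^{n}$ the leading quadratic and quartic pieces yield $\tfrac{1}{2}\delta_{t}^{+}\!\bigl[(c_0+w_d)(e_0^n)^2+\tfrac{1}{18c_0}(e_0^n)^4\bigr]$ via the same cubic-expansion trick used in Lemma \ref{L3.6}, and the analogous contribution appears at $i=N$. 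The convective term is split via $w=e+W$ into $\phi(e,e)+\phi(e,W)+\phi(W,e)$, and each piece is bounded by Cauchy--Schwarz combined with the $L^{\infty}$ bound on $W^{n+\theta}$ supplied by Lemma \ref{L3.6}. Young's inequality is then used to absorb $C\|\delta_{x}^{-}e^{n+\theta}\|_h^2$ and $C(e_0^{n+\theta})^2$, $C(e_N^{n+\theta})^2$ into the coercive left-hand side, while the forcing is controlled by $\tfrac{1}{2}\|\delta_{t}^{+}e^{n}\|^2+\tfrac{1}{2}\|T^{n+\theta}\|^2$.

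With $\theta\ge\tfrac{1}{2}$ the term $k(\theta-\tfrac{1}{2})\|\delta_{t}^{+}(\delta_{x}^{-}e^{n})\|_h^2$ and the corresponding boundary terms are nonnegative and can be discarded. I would then multiply by $e^{2\alpha t_{n+1}}$, apply the exponential identity \eqref{exp} together with $\hat{e}^n=e^{\alpha t_n}e^n$, multiply by $k$, and sum for $n=0$ to $M-1$. The bound on $\|e^{n+\theta}\|^2$ and on $k\sum e^{2\alpha t_n}\|\delta_x^- e^{n+\theta}\|_h^2$ coming from Theorem \ref{5.1} feeds the Gronwall step, and Lemma \ref{L3.6} controls $\|W^{n+\theta}\|_1$ uniformly. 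After a discrete Gronwall argument (with $k$ small enough that $1-Ck>0$) and multiplication by $e^{-2\alpha t_M}$, I obtain the first claimed estimate with the constant depending only on $\|W^0\|_1$ and the maximal truncation norm.

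For the $L^{\infty}$ part, I would invoke the embedding $\|e^n\|_\infty^2\le 2(\|\delta_{x}^{-}e^n\|_h^2+(e_j^n)^2)$ at $j=0$ or $N$ recorded in the remark after Lemma \ref{L4.4}, substitute the first bound, and use the consistency estimates \eqref{eq5.2} to replace $\max\|T^{n+\theta}\|$ by $C(h+(\theta-\tfrac{1}{2})k+k^2)$. The main obstacle is the bookkeeping for the cubic boundary feedback: one must carefully identify exact discrete time derivatives $\delta_{t}^{+}(e_j^n)^4$ while the residual cross terms (e.g.\ $(e_j^{n+\theta})^2 W_j^{n+\theta}$ and $(e_j^{n+\theta})^3$) must be absorbed into the coercive quartic term, which requires the uniform $L^{\infty}$ bound on $W_j^n$ from Lemma \ref{L3.6} together with a choice of $c_0,c_1$ making $c_j+w_d-2C>0$ exactly as in Theorem \ref{5.1}.
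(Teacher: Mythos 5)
Your proposal is correct and follows essentially the same route as the paper: testing the error equation \eqref{eq5.3} with $\delta_{t}^{+}e^{n}$, using the summation-by-parts identity of Remark \ref{r3.2}, decomposing the boundary feedback difference so that the quadratic and quartic pieces become exact discrete time derivatives via the cubic-expansion trick of Lemma \ref{L3.6}, splitting $\phi(w,w)-\phi(W,W)$ into $\phi(e,e)+\phi(e,W)+\phi(W,e)$, and closing with Theorem \ref{5.1}, Lemmas \ref{L4.3} and \ref{L3.6}, the discrete Gronwall inequality, and the discrete Sobolev embedding for the $L^{\infty}$ bound. No substantive deviation or gap.
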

	\begin{proof}
		Multiplying the first, second, and third equation of \eqref{th} by \( \frac{h}{2} \delta_{t}^{+}e_{0}^{n}\), \(h\delta_{t}^{+}e_{i}^{n}\), and \(\frac{h}{2} \delta_{t}^{+}e_{N}^{n}\) respectively, and summing over \(i=1 \ \text{to} \  N-1\) in the second equation of \eqref{eq5.3}, we get 
		\begin{align}
			\label{4.17}
			\norm{\delta_{t}^{+}e^{n}}^{2}&+\nu \Big(\delta_{t}^{+}(\delta_{x}^{-}e^{n}),\delta_{x}^{-}e^{n+\theta} \Big)_{h}+\nu g_{0}(e_{0}^{n+\theta})\delta_{t}^{+}e_{0}^{n}-\nu g_{1}(e_{N}^{n+\theta})\delta_{t}^{+}e_{N}^{n}
			\nonumber\\&=-\frac{2}{3c_{0}\nu}e_{0}^{n+\theta}W_{0}^{n+\theta}(e_{0}^{n+\theta}+W_{0}^{n+\theta})\delta_{t}^{+}e_{0}^{n}+ \frac{2}{3c_{1}\nu}e_{N}^{n+\theta}W_{N}^{n+\theta}(e_{N}^{n+\theta}+W_{N}^{n+\theta})\delta_{t}^{+}e_{N}^{n}
			\nonumber\\&\quad -w_{d}\left(\frac{h}{2}\delta_{x}^{+}e_{0}^{n+\theta}\delta_{t}^{+}e_{0}^{n} +h \sum_{i=1}^{N-1}\delta_{x}^{c}e_{i}^{n+\theta}\delta_{t}^{+}e_{i}^{n}+ \frac{h}{2}\delta_{x}^{-}e_{N}^{n+\theta}\delta_{t}^{+}e_{N}^{n}\right)
			\nonumber\\&\quad -\left(\phi(e^{n+\theta},e^{n+\theta}),\delta_{t}^{+}e^{n}\right)-\left(\phi(e^{n+\theta},W^{n+\theta}),\delta_{t}^{+}e^{n}\right)\nonumber\\&\quad -\left(\phi(W^{n+\theta},e^{n+\theta}),\delta_{t}^{+}e^{n}\right)+(T^{n+\theta}, \delta_{t}^{+}e^{n}).
		\end{align}
		
		On the right hand side of \eqref{4.17}, the first term is bounded by 
		\begin{align*}
			-\frac{2}{3c_{0}\nu}e_{0}^{n+\theta}W_{0}^{n+\theta}(e_{0}^{n+\theta}&+W_{0}^{n+\theta})\delta_{t}^{+}e_{0}^{n}\\&\leq C\Big(\frac{(2\theta-1)^{2}}{k}+\frac{1}{k}\Big)(W_{0}^{n+\theta})^{2}(e_{0}^{n+1})^{2}+\frac{C}{k}(W_{0}^{n+\theta})^{2}(e_{0}^{n+1})^{4}\\&\quad +\frac{C}{k}(e_{0}^{n})^{4}+\frac{C}{k}(e_{0}^{n})^{2},
		\end{align*}
		and 
		\begin{align*}
			\frac{2}{3c_{1}\nu}e_{N}^{n+\theta}W_{N}^{n+\theta}(e_{N}^{n+\theta}&+W_{N}^{n+\theta})\delta_{t}^{+}e_{N}^{n}\\&\leq C\Big(\frac{(2\theta-1)^{2}}{k}+\frac{1}{k}\Big)(W_{N}^{n+\theta})^{2}(e_{N}^{n+1})^{2}+\frac{C}{k}(W_{N}^{n+\theta})^{2}(e_{N}^{n+1})^{4}\\&\quad +\frac{C}{k}(e_{N}^{n})^{4}+C\big(\frac{1}{k}+\frac{(W_{N}^{n+\theta})^{2}}{k}\big)(e_{N}^{n})^{2}.
		\end{align*}
		
		By Young's inequality and Lemma	\ref{L4.4}, the second term on the right hand side of \eqref{4.17} yields
		\begin{align*}
			-w_{d}\left(\frac{h}{2}\delta_{x}^{+}e_{0}^{n+\theta}\delta_{t}^{+}e_{0}^{n} +h \sum_{i=1}^{N-1}\delta_{x}^{c}e_{i}^{n+\theta}\delta_{t}^{+}e_{i}^{n}+ \frac{h}{2}\delta_{x}^{-}e_{N}^{n+\theta}\delta_{t}^{+}e_{N}^{n}\right)\leq C(w_{d},\epsilon)\norm{\delta_{x}^{-}e^{n+\theta}}_{h}^{2}+ \epsilon\norm{\delta_{t}^{+}e^{n}}^{2},
		\end{align*}
	where \(\epsilon>0\) will be chosen later.
	
		Again, using the Young's inequality with Lemmas \ref{L4.3} and \ref{L3.6}, the third term on the right hand side of \eqref{4.17} is estimated by 
		\begin{align*}
			-\left(\phi(e^{n+\theta},e^{n+\theta}),\delta_{t}^{+}e^{n}\right)-\left(\phi(e^{n+\theta},W^{n+\theta}),\delta_{t}^{+}e^{n}\right)\leq C\norm{e^{n+\theta}}_{\infty}^{2}\norm{\delta_{x}^{-}e^{n+\theta}}_{h}^{2}+ C\norm{e^{n+\theta}}^{2}+ \epsilon\norm{\delta_{t}^{+}e^{n}}^{2}.
		\end{align*}
		Finally, from the last two term on the right hand side of \eqref{4.17}, we can see 
		\begin{align*}
			-\left(\phi(W^{n+\theta},e^{n+\theta}),\delta_{t}^{+}e^{n}\right)+(T^{n+\theta}, \delta_{t}^{+}e^{n})\leq C\norm{W^{n+\theta}}_{\infty}^{2} \norm{\delta_{x}^{-}e^{n+\theta}}_{h}^{2}+ \epsilon\norm{\delta_{t}^{+}e^{n}}^{2}+ C\norm{T^{n+\theta}}^{2}.
		\end{align*}
		Following the proof of Lemma \ref{L3.6}, we get from \eqref{4.17} with \(\epsilon=\frac{1}{6}\)
		\begin{align}
			\label{4.18}
			\nu \Big(\delta_{t}^{+}(\delta_{x}^{-}e^{n}),\delta_{x}^{-}e^{n+\theta} \Big)_{h}&+(c_{0}+w_{d})\delta_{t}^{+}e_{0}^{n} e_{0}^{n+\theta}+ (c_{1}+w_{d})\delta_{t}^{+}e_{N}^{n} e_{N}^{n+\theta}
			\nonumber\\&+ \frac{k^{3}}{18c_{0}}\big(\theta-\frac{1}{2}\big)^{3}(\delta_{t}^{+}e_{0}^{n})^{4}+ \frac{1}{36c_{0}}\delta_{t}^{+}(e_{0}^{n})^{4}
			\nonumber\\& +\frac{k^{3}}{18c_{1}}\big(\theta-\frac{1}{2}\big)^{3}(\delta_{t}^{+}e_{N}^{n})^{4}+ \frac{1}{36c_{1}}\delta_{t}^{+}(e_{N}^{n})^{4}
			+\frac{1}{2}\norm{\delta_{t}^{+}e^{n}}^{2}
			\nonumber\\&\leq C\Big(1+\norm{e^{n+\theta}}_{tr}^{2}+\norm{W^{n+\theta}}_{tr}^{2} \Big)\norm{\delta_{x}^{-}e^{n+\theta}}_{h}^{2}
			\nonumber\\&\quad +\frac{C}{k}\norm{e^{n+\theta}}_{tr}^{2}\Big((e_{0}^{n+1})^{2}+ (e_{N}^{n+1})^{2}\Big)
			+\frac{C}{k}\Big((e_{N}^{n+1})^{4}+ (e_{N}^{n})^{4}\Big)
			\nonumber\\&\quad+\frac{C}{k}\Big((e_{0}^{n+1})^{4}+ (e_{0}^{n})^{4}\Big)+\frac{C}{k}(e_{0}^{n})^{4}+C(e_{0}^{n})^{2}
			\nonumber\\&\quad +C\Big(\frac{(2\theta-1)^{2}}{k}+\frac{1}{k}\Big)(W_{0}^{n+\theta})^{2}(e_{0}^{n+1})^{2}+\frac{C}{k}(W_{0}^{n+\theta})^{2}(e_{0}^{n+1})^{4}
			\nonumber\\&\quad+ C\Big(\frac{(2\theta-1)^{2}}{k}+\frac{1}{k}\Big)(W_{N}^{n+\theta})^{2}(e_{N}^{n+1})^{2}+\frac{C}{k}(W_{N}^{n+\theta})^{2}(e_{N}^{n+1})^{4}
			\nonumber\\&\quad+\frac{C}{k}(e_{N}^{n})^{4}+\frac{{C}}{k}(e_{N}^{n})^{2}+C\norm{T^{n+\theta}}^{2}.
		\end{align}
		Note that
		\[\Big(\delta_{t}^{+}(\delta_{x}^{-}e^{n}), \delta_{x}^{-}e^{n+\theta}\Big)_{h}=\frac{1}{2}\delta_{t}^{+}\norm{\delta_{x}^{-}e^{n}}_{h}^{2}+ k(\theta-\frac{1}{2})\norm{\delta_{t}^{+}(\delta_{x}^{-}e^{n})}_{h}^{2},
		\]
		and
		\[\delta_{t}^{+}e_{0}^{n} e_{0}^{n+\theta}=\frac{1}{2}\delta_{t}^{+}(e_{0}^{n})^{2}+ k(\theta-\frac{1}{2})(\delta_{t}^{+}e_{0}^{n}).
		\]
		Multiplying \eqref{4.18} by \(2k\) and considering the case \(\theta\geq \frac{1}{2}\), we arrive at 
		\begin{align*}
			\nu \Big(\norm{\delta_{x}^{-}e^{n+1}}_{h}^{2}-\norm{\delta_{x}^{-}e^{n}}_{h}^{2}\Big)&+ (c_{0}+w_{d})\Big((e_{0}^{n+1})^{2}-(e_{0}^{n})^{2} \Big)+(c_{1}+w_{d})\Big((e_{N}^{n+1})^{2}-(e_{N}^{n})^{2} \Big)
			\\&+\frac{1}{18c_{0}}\Big((e_{0}^{n+1})^{4}-(e_{0}^{n})^{4} \Big)+\frac{1}{18c_{1}}\Big((e_{N}^{n+1})^{4}-(e_{N}^{n})^{4} \Big)+k\norm{\delta_{t}^{+}e^{n}}^{2}
			\\&\leq Ck\Big(1+\norm{e^{n+\theta}}_{tr}^{2}+\norm{W^{n+\theta}}_{tr}^{2} \Big)\norm{\delta_{x}^{-}e^{n+\theta}}_{h}^{2}
			\nonumber\\&\quad +C\norm{e^{n+\theta}}_{tr}^{2}\Big((e_{0}^{n+1})^{2}+ (e_{N}^{n+1})^{2}\Big)
			+C\Big((e_{N}^{n+1})^{4}+ (e_{N}^{n})^{4}\Big)
			\nonumber\\&\quad+C\Big((e_{0}^{n+1})^{4}+ (e_{0}^{n})^{4}\Big)+C(e_{0}^{n})^{4}+Ck(e_{0}^{n})^{2}
			\nonumber\\&\quad +C(2\theta-1)^{2}(W_{0}^{n+\theta})^{2}(e_{0}^{n+1})^{2}+C(W_{0}^{n+\theta})^{2}(e_{0}^{n+1})^{4}
			\nonumber\\&\quad+ C(2\theta-1)^{2}(W_{N}^{n+\theta})^{2}(e_{N}^{n+1})^{2}+C(W_{N}^{n+\theta})^{2}(e_{N}^{n+1})^{4}
			\nonumber\\&\quad+C(e_{N}^{n})^{4}+C\big(k+1\big)(e_{N}^{n})^{2}+Ck\norm{T^{n+\theta}}^{2}.
		\end{align*}
		Again, we follow the Lemmas \ref{L4.3} and \ref{L3.6}. Summing from \(n=0\) to \(M-1\) and using discrete Gronwall's inequality in the resulting inequality, the first part of the proof is completed with Theorem \ref{5.1}.
		
	Using the first part along with the inequality
	\[\norm{e^{n}}_{\infty}^{2}\leq 2\Big(\norm{\delta_{x}^{-}e^{n}}_{h}^{2}+ (e_{0}^{n})^{2}+(e_{N}^{n})^{2} \Big), \quad \forall \ 1\leq n\leq M,
		\]
		the second part of the proof follows.
	\end{proof}

	From Theorem \ref{5.1}, using truncation error  \eqref{eq5.2}, we obtain the following  error estimate for state variable in terms spatial and time step size. 
	\begin{corollary}
		\label{c5.1}
		Suppose all the hypotheses in Theorem \ref{5.1} are satisfied. Then, there exists a positive constant $ C $ independent of $ h $ and $ k $ such that for all $ 0 \leq n \leq M-1$
		\begin{align*}
			\norm{e^{M}}^{2}&+2k\nu \sum_{n=1}^{M-1} \norm{\delta_{x}^{-}e^{n+\theta}}_{h}^{2}+\frac{k}{\delta}\sum_{n=1}^{M-1}\left(({c_{0}}+{w_{d}})(e_{0}^{n+\theta})^{2}+\frac{2}{9c_{0}}(e_{0}^{n+\theta})^{4}\right)\\&+\frac{k}{\delta}\sum_{n=1}^{M-1}\left(({c_{1}}+{3w_{d}})(e_{N}^{n+\theta})^{2}+\frac{2}{9c_{1}}(e_{N}^{n+\theta})^{4}\right)\leq  Ce^{-2\alpha t_{M}}(h+(\theta-\frac{1}{2})k+ k^{2})^{2},
		\end{align*}
		where $ e^{n+\frac{1}{2}} = \frac{1}{2}(e^{n+1}+e^{n}).$
	\end{corollary}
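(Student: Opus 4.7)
The plan is to derive Corollary \ref{c5.1} as a direct consequence of Theorem \ref{5.1} by inserting the explicit truncation-error bounds from \eqref{eq5.2}. First I would estimate $\max_{0\leq n\leq M-1}\norm{T^{n+\theta}}^{2}$ in the discrete $L^{2}$-norm. Using the definition $\norm{T^{n+\theta}}^{2}=\tfrac{h}{2}(T_{0}^{n+\theta})^{2}+h\sum_{i=1}^{N-1}(T_{i}^{n+\theta})^{2}+\tfrac{h}{2}(T_{N}^{n+\theta})^{2}$, the interior points contribute $O((h^{2}+k^{2}+(\theta-\tfrac{1}{2})k)^{2})$ while the two boundary points contribute at most $h\cdot O((h+k^{2}+(\theta-\tfrac{1}{2})k)^{2})$; since $h<1$, both pieces are dominated by a common factor $C(h+(\theta-\tfrac12)k+k^{2})^{2}$, which is the asserted right-hand side.

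Next I would invoke Theorem \ref{5.1}, which supplies
\[
\norm{e^{M}}^{2}+k\beta^{*1} e^{-2\alpha t_{M}}\sum_{n=0}^{M-1}\bigl(\norm{\delta_{x}^{-}\hat{e}^{n+1}}_{h}^{2}+(\hat{e}_{0}^{n+1})^{2}+(\hat{e}_{N}^{n+1})^{2}\bigr)+\cdots\leq Ce^{-2\alpha t_{M}}\norm{W^{0}}_{1}^{2}\max_{n}\norm{T^{n+\theta}}^{2}.
\]
Combining this with the truncation estimate from the previous paragraph immediately bounds the right-hand side by $Ce^{-2\alpha t_{M}}(h+(\theta-\tfrac12)k+k^{2})^{2}$, where the factor $\norm{W^{0}}_{1}^{2}$ is absorbed into $C$ since it depends only on the initial data.

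The last step is to translate the bound from the exponentially-weighted quantities $\hat{e}^{n+1}$, $\hat{e}_{0}^{n+1}$, $\hat{e}_{N}^{n+1}$ (indexed at $n+1$) to the convex combinations $e^{n+\theta}=\theta e^{n+1}+(1-\theta)e^{n}$ appearing on the left of the corollary. This uses the elementary inequalities $\norm{\delta_{x}^{-}e^{n+\theta}}_{h}^{2}\leq 2\theta^{2}\norm{\delta_{x}^{-}e^{n+1}}_{h}^{2}+2(1-\theta)^{2}\norm{\delta_{x}^{-}e^{n}}_{h}^{2}$ and the analogous pointwise inequalities at $i=0,N$ (including the quartic terms via $(a+b)^{4}\leq 8(a^{4}+b^{4})$), together with index shifting in the telescoping sums; the exponential weights only produce an extra bounded factor $e^{2\alpha k}\leq C$ when stripping the hats, which is absorbed into the constant $C$.

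The main technical nuisance, rather than a genuine obstacle, is bookkeeping the quartic boundary terms: after the convex-combination expansion they split into cross terms $(e_{0}^{n+1})^{2}(e_{0}^{n})^{2}$ and $(e_{N}^{n+1})^{2}(e_{N}^{n})^{2}$ which must be handled by AM--GM before reabsorption into the telescoped sum. The undefined symbol $\delta$ on the left-hand side of the statement should be read as a fixed positive constant (independent of $h,k$) coming from the $\theta$-dependent coefficients $\beta^{*1}$ of Theorem \ref{5.1}, and it can be absorbed into the generic constant $C$ on the right.
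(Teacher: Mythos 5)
Your overall route is the same as the paper's: the paper offers no explicit proof of Corollary \ref{c5.1} beyond the one-line remark that it follows ``from Theorem \ref{5.1}, using the truncation error \eqref{eq5.2}'', and your three steps (bounding $\max_n\norm{T^{n+\theta}}^2$ via the discrete $L^2$-norm, invoking Theorem \ref{5.1}, and converting the left-hand side from the hatted level-$(n+1)$ quantities to the $e^{n+\theta}$ combinations) are exactly the details that remark suppresses. Your truncation-error estimate is correct: the boundary points contribute $h\cdot O\big((h+(\theta-\tfrac12)k+k^2)^2\big)$ and the interior points $O\big((h^2+(\theta-\tfrac12)k+k^2)^2\big)$, both dominated by $C(h+(\theta-\tfrac12)k+k^2)^2$, and the convex-combination inequalities for the quadratic and quartic boundary terms together with $e^0=0$ handle the index bookkeeping.

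The one step that does not go through as you state it is the removal of the exponential weights. Stripping the hat from $\hat e^{\,n+1}=e^{\alpha t_{n+1}}e^{n+1}$ costs a factor $e^{-2\alpha t_{n+1}}$, not $e^{-2\alpha k}$; combined with the prefactor $e^{-2\alpha t_M}$ in Theorem \ref{5.1} this leaves $e^{-2\alpha(t_M-t_{n+1})}$, which is not bounded below uniformly in $n$ and $M$. Consequently, from the theorem you may conclude either that the \emph{weighted} sums are bounded by $Ce^{-2\alpha t_M}(h+(\theta-\tfrac12)k+k^2)^2$, or that the \emph{unweighted} sums $k\sum_n\norm{\delta_x^-e^{n+\theta}}_h^2$ etc.\ are bounded by $C(h+(\theta-\tfrac12)k+k^2)^2$ (using $e^{2\alpha t_{n+1}}\geq 1$), but not that the unweighted sums carry the decaying factor $e^{-2\alpha t_M}$ as the corollary's display literally asserts for every term. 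Only the $\norm{e^M}^2$ term genuinely inherits that factor. This is arguably an imprecision in the corollary's statement itself (as is the undefined $\delta$, which you reasonably read as a fixed positive constant coming from $\beta^{*1}$), but your justification ``the exponential weights only produce an extra bounded factor $e^{2\alpha k}$'' is not a correct account of it and should be replaced by one of the two honest conclusions above.
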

	
	In the following theorem, we analyze the error of the feedback controllers.
	\begin{theorem}
		\label{5.2}
		Suppose all hypotheses in Theorem \ref{5.1} are satisfied. There exists a constant  $ C>0 $ independent of $ h $ and $ k $ such that for all $ 0\leq n \leq M $
		\begin{align*}
			|g_{i}(w_{i}^{n})-g_{i}({W_{i}^{n}})|\leq C\max_{0\leq n\leq M}\vert e_{i}^{n}\vert\leq Ce^{-2\alpha t_{M}}(h+k^{2}+(\theta-\frac{1}{2})k).
		\end{align*}
		where $ i=0$ and $ N $.
	\end{theorem}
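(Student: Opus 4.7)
The plan is to exploit the explicit polynomial form of the feedback maps $g_0$ and $g_N$ defined in \eqref{eq1}--\eqref{eq2} and reduce the controller error to a pointwise boundary error on the state. Writing the difference for $i=0$ as
\begin{align*}
g_0(w_0^n)-g_0(W_0^n)=\frac{1}{\nu}\Big((c_0+w_d)\,e_0^n+\frac{2}{9c_0}\big((w_0^n)^3-(W_0^n)^3\big)\Big),
\end{align*}
I would factor the cubic term using the identity $a^3-b^3=(a-b)(a^2+ab+b^2)$, so that
\begin{align*}
(w_0^n)^3-(W_0^n)^3=e_0^n\big((w_0^n)^2+w_0^n W_0^n+(W_0^n)^2\big),
\end{align*}
and analogously for $i=N$ with the obvious sign change coming from the definition of $g_N$.

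The next step is to control the coefficient factor by an absolute constant. Since $w\in C^{4,3}([0,1],[0,T])$ is the exact solution, $|w_0^n|$ and $|w_N^n|$ are uniformly bounded by a constant depending only on $w$. For the discrete solution $W^n$, the uniform-in-time $L^\infty$ bound established in Lemma \ref{L3.6} gives $\norm{W^n}_\infty\leq C\norm{W^0}_1 e^{-\alpha t_n}$, which in particular controls $|W_0^n|$ and $|W_N^n|$. Combining these bounds with the triangle inequality yields
\begin{align*}
|g_i(w_i^n)-g_i(W_i^n)|\leq \frac{1}{\nu}\Big(|c_i+w_d|+\frac{2}{9c_i}\big(|w_i^n|^2+|w_i^n||W_i^n|+|W_i^n|^2\big)\Big)|e_i^n|\leq C|e_i^n|,
\end{align*}
for $i=0,N$, where $C$ depends on $\nu$, $c_0$, $c_1$, $w_d$, $\norm{W^0}_1$, and the $C^{0}$-norm of the exact solution $w$, but not on $h$ or $k$.

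Finally, the bound on $|e_i^n|$ in terms of $h$ and $k$ follows directly from the discrete $L^\infty$ error estimate proved in the lemma immediately preceding the corollary, namely $\norm{e^n}_\infty\leq C e^{-2\alpha t_M}(h+(\theta-\tfrac{1}{2})k+k^2)$ for all $1\leq n\leq M$ (the case $n=0$ is trivial since $e^0=0$). Taking the maximum over $n$ and specializing to $i=0,N$ yields the claimed estimate. The only nontrivial point is ensuring that all constants absorbed along the way remain independent of $h$ and $k$, which is guaranteed by the uniform stability bounds from Lemmas \ref{L4.3} and \ref{L3.6}; beyond that, the proof is essentially algebraic, so I do not anticipate a serious obstacle.
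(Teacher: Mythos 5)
Your proposal is correct and follows essentially the same route as the paper: factor the cubic difference via $a^3-b^3=(a-b)(a^2+ab+b^2)$, bound the quadratic factor by a constant using the a priori bounds on $w$ and $W^n$, and invoke the pointwise error estimate for $e_i^n$. The only cosmetic difference is that the paper substitutes $w_0^n=e_0^n+W_0^n$ before bounding, and cites Corollary \ref{c5.1} for the final step, whereas you appeal directly to the $L^\infty$ error lemma — which is, if anything, the more precise reference for a pointwise bound.
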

	\begin{proof} We prove the results for $ i=0 $ .
		\begin{align}
			\label{4.16}
			g_{0}(w_{0}^{n})-g_{0}({W_{0}^{n}})=\frac{1}{\nu}\left((c_{0}+w_{d})e_{0}^{n}+\frac{2}{9c_{0}}e_{0}^{n}((w_{0}^{n})^{2}+ (W_{0}^{n})^{2}+w_{0}^{n}W_{0}^{n})\right).
		\end{align}
		Set $ w_{0}^{n}= e_{0}^{n}+W_{0}^{n} $ in (\ref{4.16}), we obtain
		\begin{align*}
			|g_{0}(w_{0}^{n})-g_{0}({W_{0}^{n}})|\leq\frac{1}{\nu}\left( (c_{0}+w_{d})|e_{0}^{n}|+\frac{2}{9c_{0}}|e_{0}^{n}||2(e_{0}^{n})^{2}+4 (W_{0}^{n})^{2}+e_{0}^{n}W_{0}^{n}|\right),
		\end{align*}
		which implies,
		\begin{align*}
			|g_{0}(w_{0}^{n})-g_{0}({W_{0}^{n}})|\leq C\max_{0\leq n\leq M}\vert e_{0}^{n}\vert,
		\end{align*}
		where the result follows from the corollary \ref{c5.1}.  The proof for $ i=N$ follows in a similar fashion.
	\end{proof}
\section{Numerical Results.}
In this section, we conduct numerical experiments to validate our theoretical findings. We examine both controlled and uncontrolled approximation solutions for various values of  $\theta \in \left[0, 1\right]$. We have shown that the approximate solution of \eqref{w11}-\eqref{w12} exponentially decay to zero with respect to the time, which shows that the numerical solution of \eqref{Eq1}-\eqref{Eq2} goes to a desired constant steady state solution. Furthermore, we show that the order of convergence for the state variable and feedback controllers. To solve the resulting nonlinear system of equations for the unknown solution $W^{n+1}$ at each time step, we utilize Newton's method, using the solution from the previous time step, $W^{n},$ as the initial guess.
\begin{example}
	In this example, we examine the initial condition $ (t=0) $ $w_{0}(x)=5x(x-1)-w_{d}$, where $w_{d} = 5$ is the steady state solution. We select $t=[0,1] $ and $\nu =1$. The difference scheme (\ref{th}) is solved for both the state cost and uncontrolled solutions.
	
	In Fig. \ref{fig:ex2}(a), with $ \theta=1,$  the approximate solution $W^{n}$ that does not converge to zero is referred to as the  ``uncontrolled solution". Upon applying feedback control, the approximate solution $W^{n}$ exponentially decay to zero for various values of $c_{0}$ and $c_{1}$ in the $ L^{2}$-norm.
	\begin{figure}[h!]
		\centering
		(a)	\includegraphics[width=0.43\textwidth]{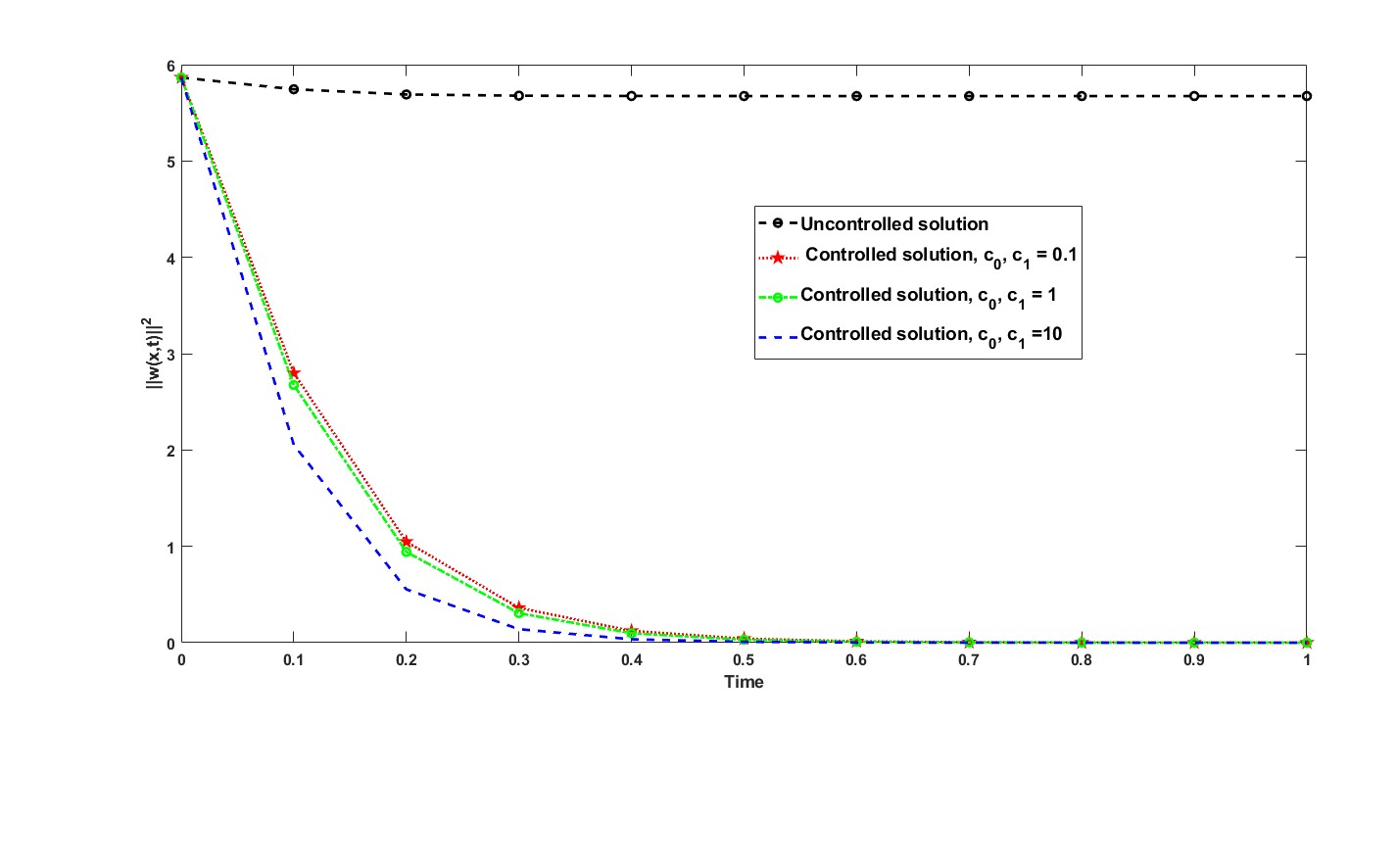}
		(b)\includegraphics[width=0.48\textwidth]{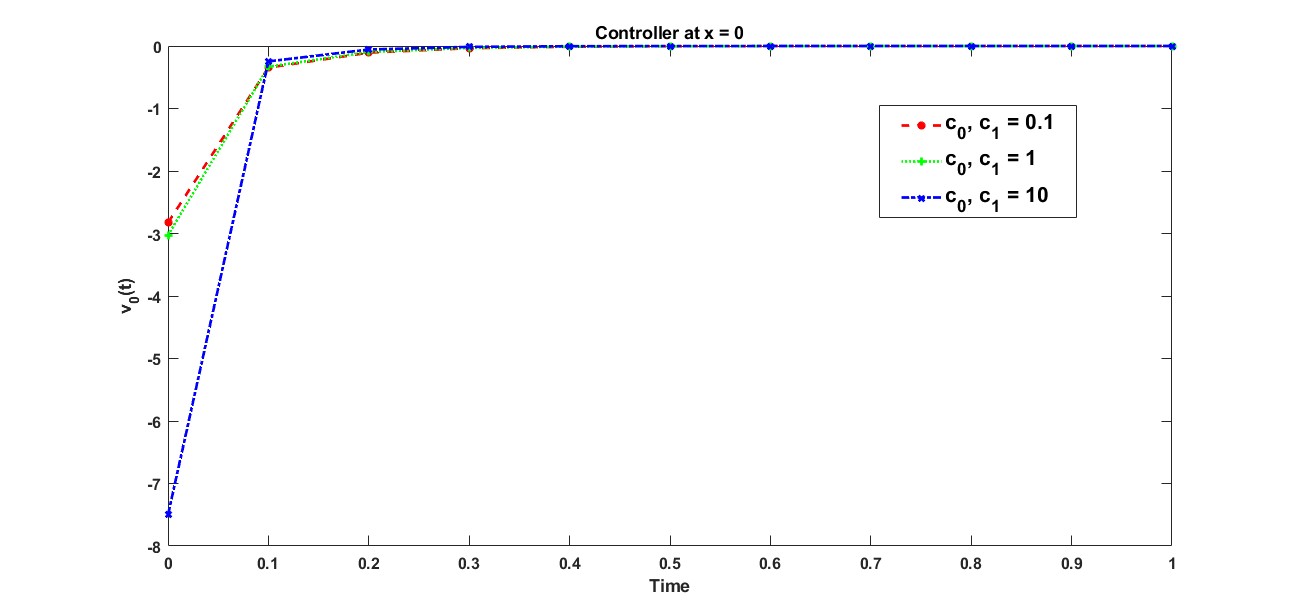}
		(c)	\includegraphics[width=0.35\textwidth]{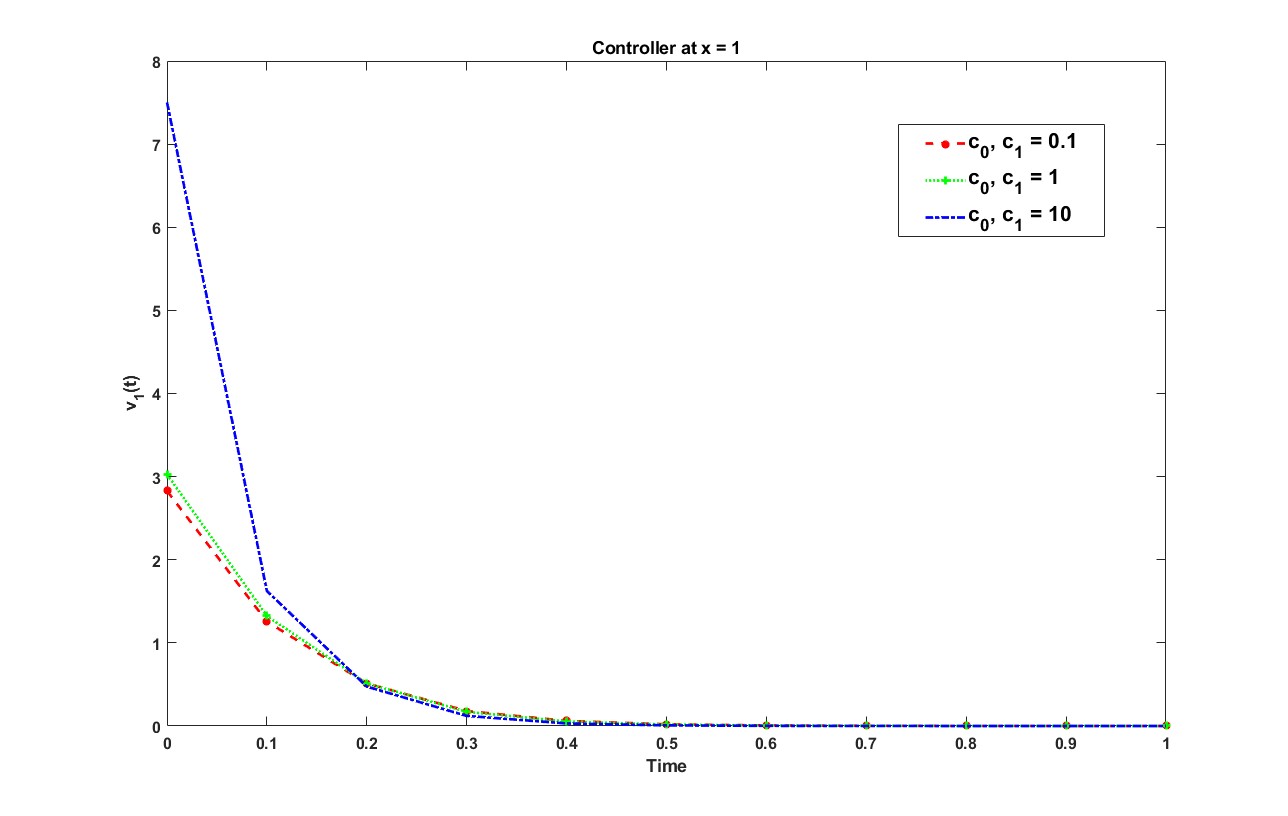}
		\caption{Example 5.1: {\bf (a)} Both Controlled and Uncontrolled solution in discrete $ L^{2}-$norm. {\bf (b)} Feedback controller at $x=0$ with $ \theta=1 $, {\bf (c)}  Feedback controller at $ x=1$  with $ \theta=1.$}\label{fig:ex2}
	\end{figure}
	
	 From Figs. \ref{fig:ex2}(b) and \ref{fig:ex2}(c), we observe that the feedback controllers at $ x=0 $ and $ x=1 $ for different values of $ c_{0} $ and $ c_{1} $ with $ \nu =1$ decay exponentially towards zero over time. The feedback controllers decay faster for large values of $ c_{i}, i=0, 1. $		
	
	For sufficiently small $ h,$  denote the error of the difference scheme (\ref{th}) in space and the order of convergence as follows
	\begin{align*}
		e_{\infty}(h) &= \max_{0\leq n\leq M}\max_{0\leq i\leq N}|W_{i}^{n}(h, k) - W_{2i}^{n}(\frac{h}{2}, k)|,\\&
		\text{Order of convergence (O. C.)}= \log_{2}\left(\frac{e_{\infty}(2h)}{e_{\infty}(h)}\right).
	\end{align*}
	Similarly, for temporal direction, we have found the order of convergence.
	
	\begin{table}[ht]
		\centering
		\caption{ The order of convergence for state variable with respect to space in Example 5.1 for $ \theta =1 $ with a fixed value of $ M= 10000. $ }
		\begin{tabular}{ c c c c c }
		\hline\noalign{\smallskip}
			\textbf{N} & $\norm{w^{n}-W^{n}}_{\infty}$       & \textbf{O. C.} & $\norm{w^{n}-W^{n}}$  &          \textbf{O. C.}   \\
			\noalign{\smallskip}\hline\noalign{\smallskip}
			$20$ &   $ --$            &     --       &  $-- $ &    --                 \\
			           
			$40$&   $   6.76e-07 $     &  $ -- $     &   $3.86e-07$ &    $--$           \\
		             
			$80$ &       $1.66e-07 $      & $2.03$       &    $9.46e-08$&     $2.03$              \\
			
			$160$ &      $  4.11e-08  $     &  $2.01$       &  $2.35e-08$  &  $2.01$           \\
		
			$320$ &        $1.03e-08 $    & $ 2.00 $         &  $5.87e-09$  &      $2.00$            \\
			
			$640$ &        $ 2.56e-09$    & $2.00$         &  $1.47e-09$  &      $2.00$            \\
			\noalign{\smallskip}\hline
		\end{tabular}
		\label{table:1}
	\end{table}
	
	\begin{table}[ht]
		\centering
		\caption{Example 5.1: The order of convergence for state variable in the temporal direction  when $ \theta=\frac{1}{2} $, and  $ \theta =1 ,$ with a fixed value of $ N=100. $  }
		\begin{tabular}{ c c c c c }
		\hline\noalign{\smallskip}
			\textbf{M} &  $\norm{w^{n}-W^{n}}_{\infty}$  & \textbf{O. C. ( $\theta =\frac{1}{2} $)}  & $\norm{w^{n}-W^{n}}$ &    \textbf{ O. C. ($ \theta =1) $ }                     \\
		\noalign{\smallskip}\hline\noalign{\smallskip}
			$100$ &     --                   &    --   &     --   &     --                                     \\
		        
			$200$&    $ 4.64e-06 $         &      --   &  $0.0001151$       &   --                               \\
		             
			$400$ &    $5.89e-07$      &    $ 2.97 $ &  $4.96e-05$ &       $ 1.22 $                             \\
		
			$800$ &     $1.48e-07$    &    $ 1.99$  &    $2.29e-05$ &     $ 1.11 $                             \\
			
			$1600$ &    $3.73e-08$    &  $ 1.99$   &   $1.10e-05$ &      $ 1.06 $                                \\
		
			$3200$ &   $9.35e-09 $    &  $1.99$   &    $5.40e-06$&       $ 1.03 $                               \\
			\noalign{\smallskip}\hline
		\end{tabular}
		\label{table:2}
	\end{table}

	\begin{table}[ht]
		\centering
		\caption{  The order of convergence of the feedback controllers with respect to space in Example 5.1 for $ \theta =1 $ with a fixed value of $ M= 10000. $ }
		\begin{tabular}{ c c c c  c }
			\hline\noalign{\smallskip}
			\textbf{N} & \textbf{Error in $L^{\infty}-$norm}  & \textbf{O. C. at $ x=0 $} &  \textbf{Error in $L^{\infty}-$norm}  & \textbf{O. C. at $ x=1 $ } \\                            
			\noalign{\smallskip}\hline\noalign{\smallskip}
			$40$ &   $ --$             &      $--$&   $ --$      &       $--  $                                      \\
		           
			$80$&   $ 1.816 $           &    $ -- $  &    $1.808$  &         $--  $                                         \\
			               
			$160$ &     $0.804$       &   $ 1.17$  &   $ 0.799$ &      $ 1.18$                                             \\
			
			$320$ &      $0.246 $       &    $1.71 $  &  $0.244$ &    $ 1.71$                                                \\
		
			$640$ &      $0.065 $       &    $1.92 $  &  $0.065$ &    $ 1.92$                                                \\
			
			$1280$ &        $0.017$      &  $ 1.98$   &   $ 0.016$  &    $1.98$                                               \\
		
			$2560$ &        $0.004$     &  $ 1.99 $  &  $ 0.004$    &     $ 1.99 $                                              \\
			
			$6120$ &        $0.0010$     &  $ 1.99 $  &  $ 0.0010 $    &     $ 1.99 $                                              \\
			\noalign{\smallskip}\hline
		\end{tabular}
		\label{table:x=0}
	\end{table}

	Table \ref{table:1}  presents the order of convergence of the state variable for $\theta=1$ in the discrete $L^2$ and $L^\infty-$norms. Here, we observe that the error for the state variable decreases in various norms and achieve second-order convergence. However, our difference scheme \eqref{th} is overall first-order accurate in space because we discretize the Neumann boundary with first-order accuracy.
	For other values of $ \theta, $ we obtain similar order of convergence for the state variable in space.

	In Table \ref{table:2}, the order of convergence of the state variable in the temporal direction is presented for $\theta=\frac{1}{2}$ and $\theta=1$. The results indicate that the  errors decrease in the $L^{\infty}-$norm. Specifically, we observe second-order convergence  when $\theta=\frac{1}{2}$ and first-order convergence when  $\theta=1$, as predicted in  Corollary \ref{c5.1}.
	For values of $\theta\in (\frac{1}{2},1), $ we observe first-order convergence in the temporal direction.	
	
	Table \ref{table:x=0} displays the order of convergence of feedback controllers in the $ L^{\infty} $-norm, which is found to be $ 2 $ in space. Moreover, in temporal direction, we find that the order of convergence the feedback controller is $ 1, $ when $ \theta =1; $ and $ 2,$ when $ \theta= \frac{1}{2}. $
\end{example}

In the second example, we  illustrate  the solution of the difference scheme (\ref{th}) with $\theta=\frac{1}{2}$, representing the Crank-Nicolson type difference scheme.  Furthermore, we examine the behavior of the feedback controllers for different values of $\nu$ and $ \theta $.
\begin{example}
	Here, we consider the initial condition  $ w_{0}(x)=2\cos(\pi x) -w_{d} $ at $ t=0 $ where $ w_{d}=3 $ is a steady state solution of   (\ref{Eq1}) - (\ref{Eq2}) and  $ x\in(0,1), \  t=[0,1] .$  We solve (\ref{w11}) - (\ref{w12}) with zero Neumann boundary conditions, known as the uncontrolled solution. The viscosity coefficient is set to $ \nu=0.1$. 
	
	From Fig. \ref{fig:ex1}(a), with $ \theta=\frac{1}{2}$, we observe that the trajectory of the approximate solution $ W^{n}$ in the discrete $ L^{2}-$norm does not converge to zero, which is referred to as the ``uncontrolled solution".  In contrast, when feedback control law is applied, the approximate solution $ W^{n} $ in the discrete $ L^{2}-$norm converges exponentially to zero for different values of $ c_{0} $ and $ c_{1}$.
	
	\begin{figure}[!h]
		\centering
		(a)	\includegraphics[width=0.44\textwidth]{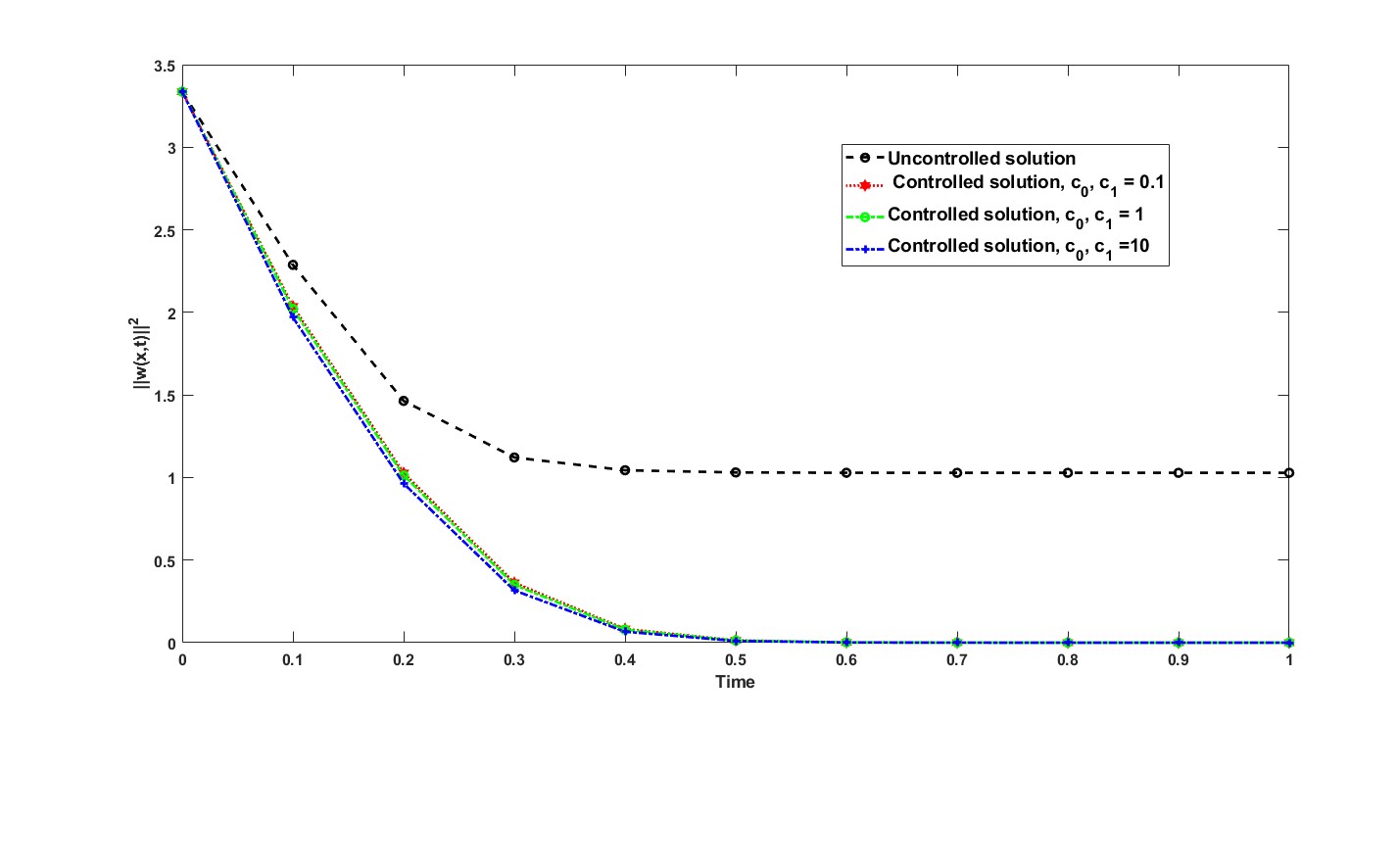}
		(b)\includegraphics[width=0.48\textwidth]{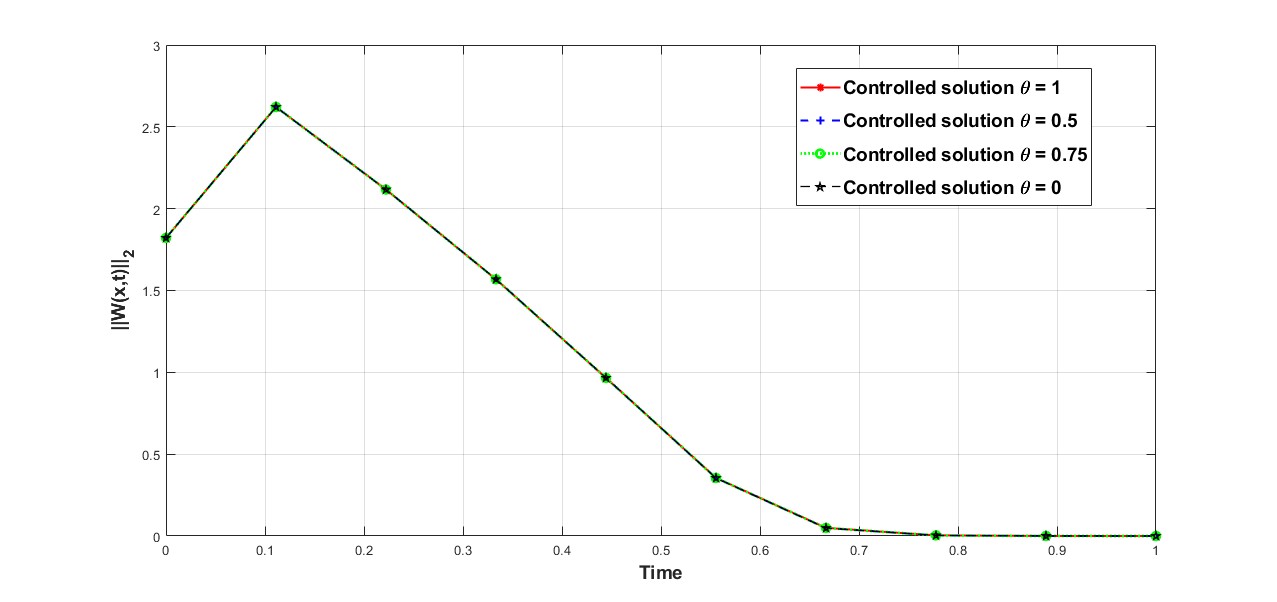}
		(c)	\includegraphics[width=0.45\textwidth]{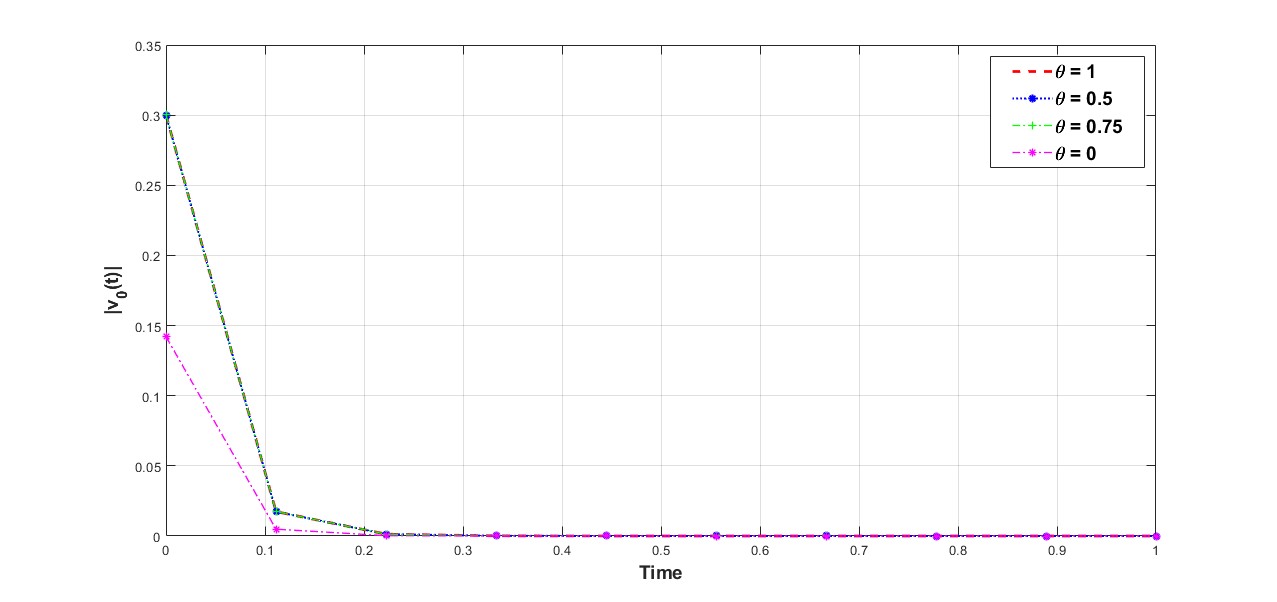}
		(d)\includegraphics[width=0.46\textwidth]{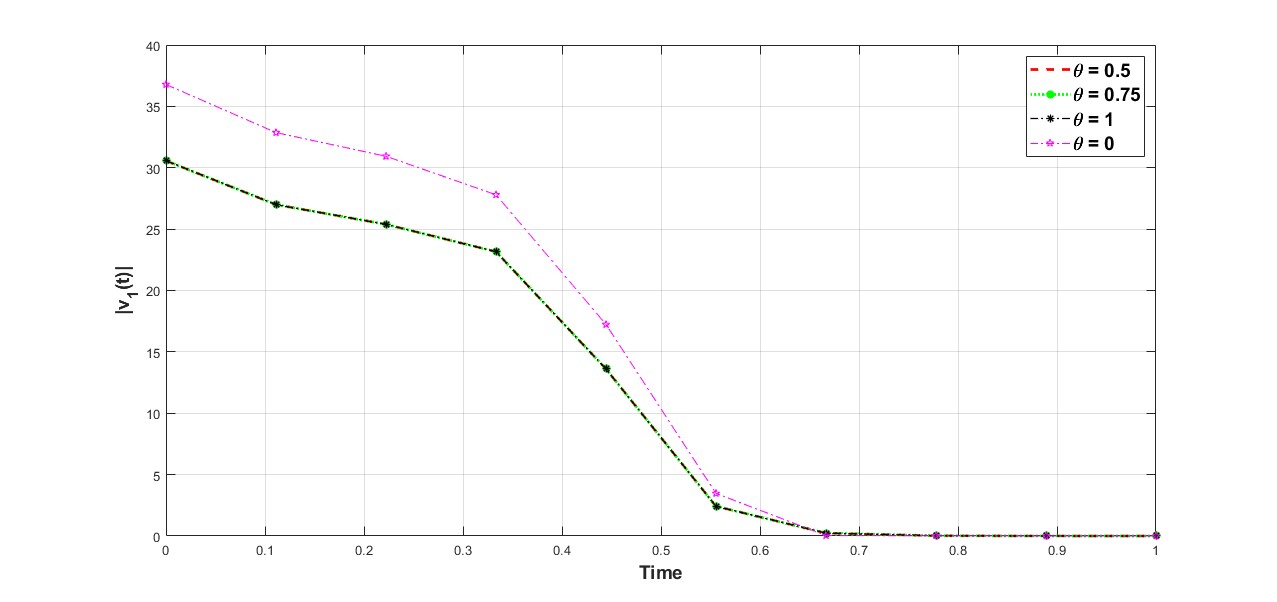}
		(e)	\includegraphics[width=0.45\textwidth]{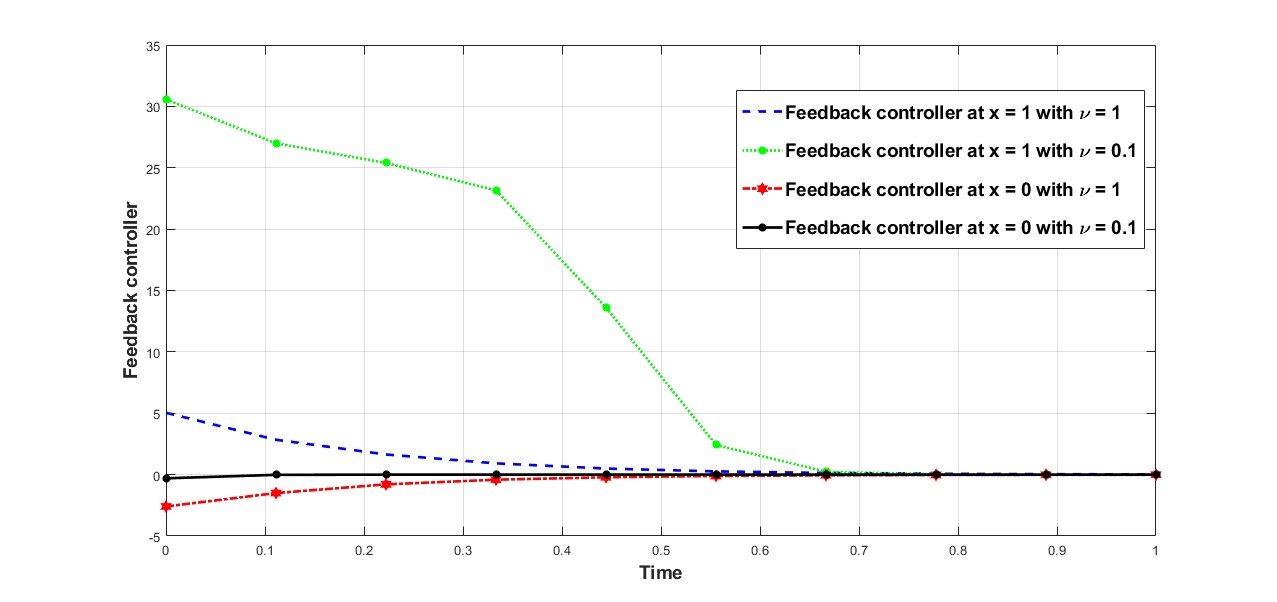}
		\caption{Example 5.2: {\bf (a)} Both Uncontrolled  and  Controlled solution in the discrete $ L^{2}-$norm  with different values of $ c_{0}$ and $ c_{1},$ {\bf (b)} Controlled solution in the discrete $ L^{2}-$norm with various values of $ \theta$, {\bf (c)}  Absolute value of the feedback controllers  with different  values of $\theta$ at $ x=0 $, {\bf (d)} Absolute value of the feedback controller  with different values of  $\theta$ at $ x=1$, {\bf (e)}  Feedback controller with different values of $ \nu $.}\label{fig:ex1}
	\end{figure}
	
	Figs. \ref{fig:ex1}(b)-(d) show the behavior of the state variable in the discrete $ L^{2}-$norm and the absolute value of feedback controllers for various values of $\theta$ with   $c_0=1$, and $c_1=1.$  For different values of $\theta$, the behavior of the state variable   is largely similar. However, when  $ 0\leq \theta<\frac{1}{2}$ and $ \theta\geq\frac{1}{2}$, the behavior of the absolute values of the feedback controllers at  $x=0$ and $x=1$  differ. Additionally, we observe that the feedback controllers approach zero over time.
	
	Fig. \ref{fig:ex1}(e) illustrates the trajectory of the feedback controller for different values of the parameter $\nu$  with fixed values of $c_{0}, c_{1} = 1,$ and $\theta = \frac{1}{2}$  over the time interval  $t=[0,1]$. The feedback controllers at $x=0$ and $x=1$ decrease to zero as time progresses. We observe that the feedback controller at $ x=1 $ lead faster decay for large value of $\nu$ with respect to the time.	
\end{example}

\textbf{Concluding Remarks:} In this work, we developed a $\theta$-based finite difference scheme for the viscous Burgers' equation with nonlinear Neumann boundary feedback control. Using the discrete energy method, we proved unconditional stability for $\theta \in [\frac{1}{2},1]$, ensuring bounded numerical solutions independent of the time step size, while conditional stability holds for $\theta \in [0,\frac{1}{2})$. The error analysis demonstrated first-order spatial accuracy of the state variable; however, the numerical experiments showed second-order convergence in space, an observation that warrants further theoretical explanation. The temporal convergence was found to be of second order for $\theta = \tfrac{1}{2}$ and of first order otherwise. A rigorous error estimate for the control variable was also established. All these results are shown to preserve the exponential stability property. Numerical experiments validated and illustrated the effectiveness of the proposed approach.

As future work, we intend to extend this framework to the viscous generalized Burgers' equation~\cite{MR4149614} with Neumann boundary feedback, and to explore high-order compact finite difference schemes for nonlinear parabolic PDEs under such boundary control settings.
\section*{Acknowledgments.}
Sudeep Kundu gratefully acknowledges the support of the Science \& Engineering Research Board (SERB), Government of India, under the Start-up Research Grant, Project No. SRG/2022/000360.

\section*{Declarations}
\textbf{Data Availability.} 

The codes  are available from authors on reasonable request. 

\textbf{CONFlLICT OF INTEREST.} 

The authors declare no conflict of interest.


\begin{thebibliography}{9}
	\bibitem{MR1910446}
	{E. Allen, J. Burns,  D. Gilliam,  J. Hill,  and
		V. Shubov}.
	{The impact of finite precision arithmetic and sensitivity on
		the numerical solution of partial differential equations},
	{Mathematical and Computer Modelling},
	{35}, {1165--1195},
	{2002}.
	
	
	\bibitem{MR3100771}
	{ E. J. Allen, J. A. Burns,  and  D. S. Gilliam}.
	{Numerical approximations of the dynamical system generated by
		{B}urgers' equation with {N}eumann-{D}irichlet boundary
		conditions},
	{ESAIM. Mathematical Modelling and Numerical Analysis},
	{47}, 	{1465--1492},
	{2013}.
	
	\bibitem{araujo2012stability}
	{A. Ara{\'u}jo, S. Barbeiro, and P. Serranho}.
	{Stability of finite difference schemes for complex diffusion processes},
	{SIAM Journal on Numerical Analysis},
	{50},
	{1284--1296},
	{2012}.
	
	\bibitem{araujo2015stability}
	{A. Ara{\'u}jo, S. Barbeiro, and P. Serranho},
	{Stability of finite difference schemes for nonlinear complex reaction--diffusion processes},
	{IMA Journal of Numerical Analysis},
	{35},
	{1381--1401},
	{2015}.	
	
	\bibitem{BAKODAH20171339}
	{H.O. Bakodah, N.A. Al-Zaid,  M. Mirzazadeh, and Q. Zhou}.
	{Decomposition method for Solving {B}urgers’ Equation with Dirichlet and Neumann boundary conditions},
	{Optik},
	{130},
	{1339-1346},
	{2017}.
	
	\bibitem{balogh2000burgers}
	{A. Balogh,  and  M. Krsti{c}}.
	{Burgers' equation with nonlinear boundary feedback: {$H^1$} stability, well-posedness and simulation},
	{Mathematical Problems in Engineering},
	{6},
	{189--200},
	{2000}.	
	
	
	\bibitem{MR2099313}
	{A. Borz\'i }.
	{Solution of lambda-omega systems: theta-schemes and multigrid
		methods},
	{Numerische Mathematik},
	{98},  {581--606},
	{2004}.		
	
	\bibitem{browder1965existence}
	{F. E. Browder}.
	{Existence and uniqueness theorems for solutions of nonlinear boundary value problems},
	{Proceedings of Symposia in Applied Mathematics},
	{17},
	{24--49},
	{1965}.	
	
	\bibitem{burns1991control}
	{J. Burns, and S. Kang}.
	{A control problem for {B}urgers' equation with bounded input/output},
	{Nonlinear Dynamics},
	{2},
	{235--262},
	{1991}.	
	
	\bibitem{MR1247468}
	{J. A. Burns,  and  H. Marrekchi}.
	{Optimal fixed-finite-dimensional compensator for {B}urgers'
		equation with unbounded input/output operators},
	{Computation and control, {III} ({B}ozeman, {MT}, 1992)},
	{Progr. Systems Control Theory},
	{15},
	{83--104},
	{1993}.
	
	\bibitem{byrnes1993boundary}
	{C. I. Byrnes }.
	{Boundary control for a viscous {B}urgers' equation},
	{Identification and Control in Systems Governed by Partial Differential Equations},
	{171--185},
	{1993}.
	
	
	\bibitem{byrnes1993boundary}
	{C. I. Byrnes and D. S. Gilliam}.
	{Boundary control and stabilization for a viscous {B}urgers’ equation},
	{Computation and Control III: Proceedings of the Third Bozeman Conference, Bozeman, Montana, August 5--11, 1992},
	{105-120},
	{1993}.		
	
	\bibitem{byrnes1998global}
	{ C. I. Byrnes,  D. S. Gilliam, and  V. I. Shubov}.
	{On the global dynamics of a controlled viscous {B}urgers' equation},
	{Journal of Dynamical and Control Systems},
	{4},
	{457--519},
	{1998}.
	
	\bibitem{chung1998finite}
	{S. K. Chung}.
	{Finite difference approximate solutions  for the {R}osenau equation},
	{Applicable Analysis},
	{69},
	{149--156},
	{1998}.
	
	\bibitem{dean1991pointwise}
	{E. J. Dean, and P. Gubernatis}.
	{Pointwise control of {B}urgers' equation—a numerical approach},
	{Computers and Mathematics with Applications},
	{22},
	{93--100},
	{1991}.
	
	
	\bibitem{esmailzadeh2022numerical}
	{M. Esmailzadeh,  J. Alavi, and H. S. Najafi}.
	{A numerical scheme for solving nonlinear parabolic partial differential equations with piecewise constant arguments},
	{International Journal of Nonlinear Analysis and Applications},
	{13},
	{783--789},
	{2022}.
	
	
	\bibitem{evans1998partial}
	{ L. C. Evans}.
	{Partial Differential Equations},
	{Graduate Studies in Mathematics},
	{1998}.
	
	\bibitem{ferreira2024fdm}
	{J. A. Ferreira, and G. Pena}.
	{{FDM}/{FEM} for nonlinear convection--diffusion--reaction equations with {N}eumann boundary conditions-Convergence analysis for smooth and nonsmooth solutions},
	{Journal of Computational and Applied Mathematics},
	{446},
	{20},
	{2024}.
	
	\bibitem{fornberg1973instability}
	{B. Fornberg}.
	{On the instability of leap-frog and {C}rank-{N}icolson approximations of a nonlinear partial differential equation},
	{Mathematics of Computation},
	{27},
	{45--57},
	{1973}.
	
	\bibitem{MR0699843}
	{D. F. Griffiths}.
	{The stability of finite difference approximations to nonlinear
		partial differential equations},
	{Bulletin of the Institute of Mathematics and its Applications},
	{18},
	{210--215},  {1982}.
	
	
	\bibitem{gumus2022finite}
	{S. Gumus,  and V. Kalantarov}.
	{Finite-parameter feedback stabilization of original {B}urgers' equations and {B}urgers' equation with nonlocal nonlinearities},	
	{Mathematical Methods in the Applied Sciences},
	{45},
	{532--545},
	{2022}.
	
	\bibitem{heywood1990finite}
	{J. G. Heywood,  and  R. Rannacher}.
	{Finite-element approximation of the nonstationary {N}avier--{S}tokes problem. Part IV: error analysis for second-order time discretization},
	{SIAM Journal on Numerical Analysis},
	{27},
	{353--384},
	{1990}.
	
	
	
	\bibitem{ito1994dissipative}
	{K. Ito, and S. Kang}.
	{A dissipative feedback control synthesis for systems arising in fluid dynamics},
	{SIAM Journal on Control and Optimization},
	{32},
	{831--854},
	{1994}.
	
	\bibitem{ito1998viscous}
	{K. Ito, and Y. Yan }.
	{Viscous scalar conservation law with nonlinear flux feedback and global attractors},
	{Journal of Mathematical Analysis and Applications},
	{227},
	{271--299},
	{1998}.
	
	\bibitem{MR1160632}
	{P. C. Jain,  R. Shankar, and T. V. Singh }.
	{Cubic spline technique for solution of {B}urgers' equation
		with a semi-linear boundary condition},
	{Communications in Applied Numerical Methods},
	{8}, {235--242},
	{1992}.
	
	\bibitem{jiwari2013numerical}
	{R. Jiwari, and R. C. Mittal, and K. K. Sharma}.
	{A numerical scheme based on weighted average differential quadrature method for the numerical solution of {B}urgers’ equation},
	{Applied Mathematics and Computation},
	{219},
	{6680--6691},
	{2013}.
	
	\bibitem{jovanovic2013analysis}
	{B. S. Jovanovi{\'c}, and E. S{\"u}li}.
	{Analysis of finite difference schemes for linear partial differential equations with generalized solutions},
	{46},
	{2013}.
	
	\bibitem{kadalbajoo2006numerical}
	{M. K. Kadalbajoo, and A. Awasthi}.
	{A numerical method based on {C}rank-{N}icolson scheme for {B}urgers’ equation},
	{Applied Mathematics and Computation},
	{182},
	{1430--1442},
	{2006}.
	
	\bibitem{krstic1999global}
	{M. Krstic}.
	{On global stabilization of {B}urgers’ equation by boundary control},
	{Systems and Control Letters},
	{37},
	{123--141},
	{1999}.
	
	
	\bibitem{kundu2018finite}
	{S. Kundu, and A. K. Pani}.
	{Finite element approximation to global stabilization of the {B}urgers’ equation by {N}eumann boundary feedback control law},
	{Advances in Computational Mathematics},
	{44},
	{541--570},
	{2018}.
	
	
	\bibitem{kunisch1999control}
	{ K. Kunisch, and  S. Volkwein}.
	{Control of the {B}urgers' equation by a reduced-order approach using proper orthogonal decomposition},
	{Journal of Optimization Theory and Applications},
	{102},
	{345--371},
	{1999}.
	
	\bibitem{MR930029}
	{ H. A. Levine}.
	{Stability and instability for solutions of {B}urgers' equation
		with a semilinear boundary condition},
	{SIAM Journal on Mathematical Analysis},
	{19}, {312--336},
	{1988}.
	
	\bibitem{liu2001adaptive}
	{W. J. Liu, and M. Krstic}.
	{Adaptive control of {B}urgers' equation with unknown viscosity},
	{International Journal of Adaptive Control and Signal Processing},
	{15},
	{745--766},
	{2001}.
	
	\bibitem{ly1997distributed}
	{ H. V. Ly,  K. D. Mease, and E. S. Titi. }
	{Distributed and boundary control of the viscous {B}urgers' equation},
	{Numerical Functional Analysis and Optimization},
	{18},
	{143--188},
	{1997}.
	
	\bibitem{merino2016finite}
	{ P. Merino}.
	{Finite element error estimates for an optimal control problem governed by the {B}urgers' equation},	
	{Computational Optimization and Applications},
	{63},
	{793--824},
	{2016}.
	
	
	\bibitem{milstein2002probabilistic}
	{G. N. Milstein,  and M. V. Tretyakov}.
	{A probabilistic approach to the solution of the {N}eumann problem for nonlinear parabolic equations},
	{IMA Journal of numerical analysis},
	{22},
	{599--622},
	{2002}.
	
	\bibitem{morton1998numerical}
	{K. W. Morton,  and D. F. Mayers}.
	{Numerical solution of partial differential equations},
	{Journal of Fluid Mechanics},
	{363},
	{349--349},
	{1998}.
	
	\bibitem{nguyen2001numerical}
	{V. Q. Nguyen }.
	{A numerical study of {B}urgers' equation with Robin boundary conditions}, {Virginia Tech},
	{2001}.
	
	\bibitem{NOJAVAN201823}
	{H. Nojavan, S. Abbasbandy, and M. Mohammadi}.
	{Local variably scaled Newton basis functions collocation method for solving {B}urgers’ equation},
	{Applied Mathematics and Computation},
	{330},
	{23-41},
	{2018}.
	
	\bibitem{omrani2008finite}
	{K. Omrani, and M. Ayadi}.
	{Finite difference discretization of the {B}enjamin-{B}ona-{M}ahony-{B}urgers equation},
	{Numerical Methods for Partial Differential Equations: An International Journal},
	{24},
	{239--248},
	{2008}.
	
	\bibitem{ozics2003finite}
	{ T.  {O}zi{{s}},  E. N. Aksan, and  A. {O}zde{{s}}}.
	{A finite element approach for solution of {B}urgers’ equation},
	{Applied Mathematics and Computation},
	{139},
	{417--428},
	{2003}.
	
	\bibitem{pao1995finite}
	{C. V. Pao}.
	{Finite difference reaction diffusion equations with nonlinear boundary conditions},
	{Numerical Methods for Partial Differential Equations},
	{11},
	{355--374},
	{1995}.
	
	\bibitem{pugh1995finite}
	{S. M. Pugh}.
	{Finite element approximations of {B}urgers' Equation}, PhD thesis, {Virginia Tech},	
	{1995}.
	
	\bibitem{smith1997finite}
	{L. C. Smith III}.
	{Finite element approximations of {B}urgers' equation with Robin's boundary conditions}, PhD thesis, {Virginia Tech},
	{1997}.
	
	\bibitem{sabeh2016distributed}
	{Z. Sabeh,   M. Shamsi, and  M. Dehghan}.
	{Distributed optimal control of the viscous {B}urgers' equation via a {L}egendre pseudo-spectral approach},
	{Mathematical Methods in the Applied Sciences},
	{39},
	{3350--3360},
	{2016}.
	
	\bibitem{samarskii2001theory}
	{A. A. Samarskii}.
	{The theory of difference schemes},
	{Monographs and Textbooks in Pure and Applied Mathematics},
	{xviii+761}, {Marcel Dekker, Inc., New York},
	{2001}.
	
	\bibitem{smaoui2005boundary}
	{N. Smaoui}.
	{Boundary and distributed control of the viscous {B}urgers' equation},
	journal={Journal of Computational and Applied Mathematics},
	{182},
	{91--104},
	{2005}.
	
	\bibitem{sun2009compact}
	{Z. Z. Sun}.
	{Compact difference schemes for heat equation with {N}eumann boundary conditions},
	{Numerical Methods for Partial Differential Equations: An International Journal},
	{25},
	{1320--1341},
	{2009}.
	
	
	\bibitem{volkwein2001distributed}
	{S. Volkwein}.
	{Distributed control problems for the {B}urgers' equation},
	{Computational Optimization and Applications},
	{18},
	{115--140},
	{2001}.
	
	\bibitem{MR4233226}
	{X. Wang, Q. Zhang , and Z. Z. Sun}.
	{The pointwise error estimates of two energy-preserving
		fourth-order compact schemes for viscous {B}urgers' equation},
	{Advances in Computational Mathematics},
	{47}, { 23- 42},
	{2021}.
	
	
	\bibitem{wani2013crank}
	{S. S. Wani, and S. H. Thakar}.
	{{C}rank-{N}icolson type method for {B}urgers’ equation},
	{International Journal of Applied Physics and Mathematics},
	{3},
	{324--328},
	{2013}.
	
	\bibitem{MR4242164}
	{Q. Zhang,  and  L. Liu}.
	{Convergence and stability in maximum norms of linearized
		fourth-order conservative compact scheme for
		{B}enjamin-{B}ona-{M}ahony-{B}urgers' equation},
	{Journal of Scientific Computing},
	{87}, {59, 31}
	{2021}.
	
	\bibitem{MR4149614}
	{ Q. Zhang,  Y. Qin,  X. Wang, and Z. Z. Sun
	}.
	{The study of exact and numerical solutions of the generalized
		viscous {B}urgers' equation},
	{Applied Mathematics Letters. An International Journal of Rapid
		Publication},
	{112}, {106719, 9},
	{2021}.
	
	\bibitem{MR4190973}
	{Q. Zhang,  X. Wang, and Z. Z. Sun}.
	{The pointwise estimates of a conservative difference scheme
		for {B}urgers' equation},
	{Numerical Methods for Partial Differential Equations. An
		International Journal},
	{36}, {1611--1628},
	{2020}.
	
	\bibitem{MR2301948}
	{M. Zhu,  and  Z. Zhao}.
	{Optimal control of nonlinear strength {B}urgers' equation under
		the {N}eumann boundary condition},
	{International Journal of Nonlinear Science},
	{1}, {111--118},
	{2006}.
	
\end{thebibliography}
	

\end{document}